\DeclareMathOperator{\supp}{supp}
\DeclareMathOperator{\sgn}{sgn}
\DeclareMathOperator*{\slim}{s-lim}
\DeclareMathOperator*{\wlim}{w-lim}
\newcommand{\jBra}[1]{\langle #1 \rangle}
\newcommand{\editadd}[1]{{ #1}}
\newcommand{\bbR}{\mathbb{R}}
\newcommand{\bbC}{\mathbb{C}}
\newcommand{\bbOne}{\mathds{1}}
\newcommand{\fu}{\mathfrak{u}}
\newcommand{\fv}{\mathfrak{v}}
\newcommand{\urem}{u_\textup{rem}}
\newcommand{\uloc}{u_\textup{loc}}
\newcommand{\Jfree}{J_\textup{free}}
\newcommand{\rmI}{\mathrm{I}}
\newtheorem{thm}{Theorem}
\newtheorem{prop}[thm]{Proposition}
\newtheorem{lemma}[thm]{Lemma}
\newtheorem{cor}[thm]{Corollary}
\newtheorem{rmk}{Remark}
\begin{document}
\title[Scattering and localized states for defocusing NLS with potential]{Scattering and localized states for defocusing nonlinear Schr\"odinger equations with potential
}
\author[Avy Soffer]{Avy Soffer$^{*}$}
 \address[Avy Soffer]{\newline
        Department of Mathematics, \newline
         Rutgers University, New Brunswick, NJ 08903 USA.}
  \email[]{soffer@math.rutgers.edu}
\author[Gavin Stewart]{Gavin Stewart}
 \address[Gavin Stewart]{\newline
        Department of Mathematics, \newline
         Rutgers University, New Brunswick, NJ 08903 USA.}
  \email[]{gavin.stewart@rutgers.edu}
  \thanks{2010 \textit{ Mathematics Subject Classification.}   35Q55  }
\thanks{
A.Soffer is supported in part by NSF-DMS Grant number 2205931
}
\thanks{$^{*}$: Corresponding author.  Email: \href{mailto:soffer@math.rutgers.edu}{\url{soffer@math.rutgers.edu}}}
\date{\today}
%\maketitle
\begin{abstract}
 We study the large-time behavior of global energy class ($H^1$) solutions of the one-dimensional nonlinear Schr\"odinger equation with a general localized potential term and a defocusing nonlinear term. By using a new type of interaction Morawetz estimate localized to an exterior region, we prove that these solutions decompose into a free wave and a weakly localized part which is asymptotically orthogonal to any fixed free wave.  We further show that the $L^2$ norm of this weakly localized part is concentrated in the region $|x| \leq t^{1/2+}$, and that the energy ($\dot{H}^1$) norm is concentrated in $|x| \leq t^{1/3+}$.  Our results hold for solutions with arbitrarily large initial data.
\end{abstract}

\maketitle

\section{Introduction}
In this work, we study the large-time behavior of solutions to the following nonlinear Schr\"odinger equation (NLS) in dimension 1:
\begin{equation}\label{eqn:NLS}
    i \partial_t u  =- \Delta u + V(x,t)u + |u|^{p-1}u
\end{equation}
The interaction consists of two terms: a potential term localized near $x = 0$ and a defocusing nonlinearity.  This type of equation occurs in many physical models from areas such as optics \cite{barak2008observation} and condensed matter physics~\cite{kengne2021spatiotemporal,wang2021rogue}. 
  We will focus on solutions that are globally bounded in the $H^1$ norm, with no additional smallness assumptions.%The data can be arbitrary large in $H^1.$

A fundamental goal of mathematical physics is to \editadd{describe all possible asymptotic behaviors for a given equation}.  This problem, known as asymptotic completeness, has been studied extensively in the case of linear equations with time-independent interactions, and very general results have been proved in both the two-body and $N$-body cases: see the texts~\cite{derezinskiScatteringTheoryClassical1997,yafaevMathematicalScatteringTheory2010,reedMethodsModernMathematical2003} and references therein.  For the time-independent problem, spectral theory tells us that the solution separates into some number of bound states (given by eigenfunctions in the discrete spectrum of $-\Delta + V$) and radiation (given by projection to the continuous spectrum).  The time evolution of the bound states is explicit, and for potentials with sufficient decay at infinity the radiation scatters to a free wave that solves the equation with potential $V = 0$.

However, once the interaction terms are time-dependent or nonlinear, there is no asymptotic theory in anything approaching the generality of the linear case.  Leaving aside the case of completely integrable equations, we cannot expect the evolution of the bound states and radiation to decouple like in the linear case.  It is not even clear whether the bound states and radiation decouple asymptotically, since the solution could have components that spread slowly in time~\cite{soffer2022existence}.  Despite these obstacles, a growing body of numerical and theoretic work has led to the widespread belief that (generic) solutions for a wide array of nonlinear dispersive equations resolve asymptotically into a number of bound states (called solitons) and radiation~\cite{sofferSolitonDynamicsScattering2006}.  Although this soliton resolution conjecture has inspired a great deal of work on the existence and stability of solitons, it has only been proven to hold for a handful of equations~\cite{duyckaertsSolitonResolutionRadial2019,deiftSteepestDescentMethod1993}.  

%{Previous works on the asymptotics of~\eqref{eqn:NLS} have focused on either proving scattering for repulsive potentials and nonlinearities, or }.  If the potential is assumed to be time-independent and repulsive, it was proved in~\cite{danconaScatteringDefocusingNLS2023} {that solutions scatter}.  There have also been recent results concerning the asymptotic stability of solitons under small, localized perturbations~\cite{chenLongtimeDynamicsSmall2023,collotAsymptoticStabilitySolitary2023}.  In contrast to these works, our results do not require any repulsiveness for the potential (and in particular hold if $-\Delta + V$ has a bound state), and the only assumption made on the data is that the energy remains bounded for all time:

In fact, the standard scattering problem for linear localized time-dependent potentials has been largely open until very recently.  There are fundamental reasons for this. The separation between the radiation and bound state components of the solutions cannot be determined by spectral theory and may be arbitrarily slow.  A key problem is to prove microlocal propagation estimates to control the behavior of the solution in certain regions of phase-space.  Although these types of estimate are well understood in the time-independent case, where they were decisive in proving $N$-body scattering~\cite{sigal1987n}, the derivation of these estimates is significantly harder in the time-dependent case owing to the inapplicability of spectral theory methods, which prevent us from projecting away bound states and low-energy waves.  

{In our work, we will study the asymptotics of solutions $u(t)$ to~\eqref{eqn:NLS} that satisfy the condition}
\begin{equation}\label{eqn:bdd-energy-hypo}
    \sup_t \lVert u(t) \rVert_{H^1} < \infty
\end{equation}
{This assumption is standard in the field, going back to the work of Tao~\cite{taoAsymptoticBehaviorLarge2004b}.  Moreover, for the defocusing equation~\eqref{eqn:NLS}, it is equivalent to assuming that the energy}
\begin{equation*}
    {\mathcal{E}(t) = \int \frac{1}{2} |\nabla u|^2 + V(x,t) |u(x,t)|^2 + \frac{1}{p+1} |u(x,t)|^{p+1}\;dx}
\end{equation*}
is globally bounded, since by the triangle inequality
\begin{equation*}
    {\lVert u(t) \rVert_{H^1}^2 \leq 2 \mathcal{E}(t) + \lVert V(x,t) \rVert_{L^\infty_{t,x}}\lVert u(t) \rVert_{L^2}^2}
\end{equation*}
{This is a realistic assumption, and can be shown to hold for a number of physically relevant models, including the cases where $V$ is time-independent or satisfies the adiabatic condition $\partial_t V \in L^1_tL^\infty_x$.}

Under the assumption~\eqref{eqn:bdd-energy-hypo}, Tao has shown that in high dimensions ($d \geq 5$) a weak form of soliton resolution holds, in the sense that all solutions to the nonlinear Schr\"odinger equation decompose in $H^1$ as time goes to infinity as
\begin{equation}\label{eqn:wsr-decomp}
    u(t) = \sum_{i=1}^N \uloc^{(i)}(x - x_i(t), t) + e^{it\Delta} u_+ + o(1)
\end{equation}
where each function $\uloc^{(i)}$ satisfies the uniform-in-time localization condition
\begin{equation}\label{eqn:wsr-loc}
    \lim_{R \to \infty} \sup_t \int_{|x| \geq R} |\uloc^{(i)}(x,t)|^2\;dx = 0
\end{equation}
and $|x_i(t) - x_j(t)| \to \infty$ for $i \neq j$~\cite{taoConcentrationCompactAttractor2007}.  Under the additional assumption that $u$ is radial, the decomposition~\eqref{eqn:wsr-decomp} simplifies t
\begin{equation}\label{eqn:wsr-rad-decomp}
    u(t) = \uloc(x, t) + e^{it\Delta} u_+ + o_{H^1}(1)
\end{equation}
and $\uloc$ satisfies the quantitative localization bounds
\begin{equation}\label{eqn:wsr-mass-loc}
    \sup_t\int_{|x| \geq R} |\uloc(x,t)|^2\;ds \lesssim_{u_0} R^{-\alpha}
\end{equation}
for some $\alpha$ depending only on the dimension~\cite{taoGlobalCompactAttractor2008b,soffer2023soliton}.  In lower dimensions, it has not yet been possible to obtain such precise asymptotics.  In dimension $3$, Tao's work~\cite{taoAsymptoticBehaviorLarge2004b} shows that solutions to the radial focusing NLS decompose as
\begin{equation}\label{eqn:wsr-rad-3D-decomp}
    u(t) = \uloc(x, t) + e^{it\Delta} u_+ + o_{\dot{H}^1}(1)
\end{equation}
where $|\nabla^j \uloc(x,t)| \lesssim \jBra{x}^{-3/2-j+}$.  In particular, this implies that the kinetic energy of the bound state $\uloc$ decays in $x$:
\begin{equation*}
    \sup_t\int_{|x| \geq R} |\nabla \uloc(x,t)|^2\;ds \lesssim_{u_0} R^{-2+}
\end{equation*}
which can be seen as a weaker version of the quantitative localization of mass~\eqref{eqn:wsr-mass-loc} for $d \geq 5$.  Although the mass is not known to be localized, the results of~\cite{liuLargeTimeAsymptotics2025,soffer2022large} show that it can only spread slowly over time.  More precisely, all the mass of the nonscattering part of the solution must be localized in the region $|x| \leq t^{1/2+}$ as $t \to \infty,$ in the sense that
\begin{equation}\label{eqn:wsr-weak-mass-loc}
    \lim_{t \to \infty} \int_{|x| \geq t^{1/2+}} |\uloc(x,t)|^2\;dx = 0
\end{equation}
Within the class of equations
\begin{equation}\label{eqn:Schro-tdp}
    i\partial_t u + \Delta u = V(x,t) u
\end{equation}
in dimension \editadd{at least 5} with $\jBra{x}^2V(x,t) \in L^\infty_{x,t}$, explicit self-similar solutions were constructed in~\cite{soffer2022existence}.  \editadd{Although such explicit constructions are not known in lower dimensions, the results suggest} that the exponent $1/2$ in~\eqref{eqn:wsr-weak-mass-loc} may be optimal.

Extending these results to dimensions one and two is difficult because the linear dispersive decay ${\lVert e^{it\Delta} \rVert_{L^1 \to L^\infty} \lesssim t^{-d/2}}$ is no longer integrable in time, which causes serious difficulties in even defining the linear scattering state $u_+$.  This problem was only recently addressed by~\cite{soffer2022large} for the equation~\eqref{eqn:Schro-tdp} by making use of the fact that
$$\lVert F(|x| \leq t^\alpha) F(|D| \geq t^{-\delta}) e^{-it\Delta} V(x,t) \rVert_{L^2_x \to L^2_x}$$
is integrable in time for potentials $V$ which decay pointwise at an appropriate rate.  {Here, $F(|x| \leq t^\alpha)$ and $F(|D| \geq t^{-\delta})$ denote smooth projections to the regions $|x| \leq t^\alpha$ and $|D| \geq t^{-\delta}$; see~\Cref{sec:def} for a precise definition.}  Using this, it is possible to prove that~\eqref{eqn:wsr-weak-mass-loc} holds for~\eqref{eqn:Schro-tdp} in dimensions one and two.

This method cannot be applied to~\eqref{eqn:NLS} due to the presence of the nonlinear term $|u|^{p-1}u$, which does not decay pointwise in dimension one.  To handle this term, we will use the defocusing character of the nonlinearity to prove Morawetz and interaction Morawetz estimates in the exterior region $|x| \geq t^{1/3+}$.  We then apply these estimates to two cases of~\eqref{eqn:NLS}: equations with nonradiative solutions and mass-supercritical equations.  If we assume that $u(t)$ is nonradiative (which, roughly speaking, means $u_+ = 0 :$ see~\Cref{thm:nonrad-spreading} for a precise definition), then the exterior Morawetz estimate can be used to show that
\begin{equation}\label{eqn:nonrad-spreading-intro}
    \int |x||u(x,t)|^2\;dx \lesssim \jBra{t}^{1/2},
\end{equation}
which in particular implies~\eqref{eqn:wsr-weak-mass-loc}.  For equations with $p < \editadd{5}$, nonlinear effects are strong enough to improve the right-hand side of~\eqref{eqn:nonrad-spreading-intro} to $\jBra{t}^{\editadd{\frac{2(p-1)}{3p+1}}}$.  For mass-supercritical equations, the exterior interaction Morawetz estimate gives us time-averaged decay for the nonlinearity, allowing us to prove the existence of the free channel wave operator $u_0 \to u_+$, as well as the improved localization for the weakly localized part $\uloc(t)$.  To the best of our knowledge, the exterior interaction Morawetz estimate we give is new, and we believe it may be of independent interest.

\subsection{Related work}

In addition to the work discussed above, there are a number of other perspectives on scattering and soliton resolution.  We will briefly summarize a few areas and discuss how they relate to our work below:

\subsubsection{Equations with repulsive interactions} In the case where the attractive part of the interaction is absent or too weak to form a bound state, profile decomposition can be used to prove scattering of the solution.  The general form of the argument, often known as the concentration compactness-rigidity method, was introduced in~\cite{kenigGlobalWellposednessScattering2006a} for the focusing energy-critical nonlinear Schr\"odinger equation in dimension $3$, and has since been generalized and refined in a vast number of works: see~\cite{kenigGlobalWellposednessScattering2008,duyckaertsScatteringNonradial3D2008, duyckaertsProfileDecompositionScattering2024a,fangScatteringFocusingEnergysubcritical2011,duyckaertsMassenergyScatteringCriterion2024} for a (very) incomplete sampling.  In particular, it can be applied in the presence of a small or repulsive potential~\cite{hongScatteringNonlinearSchrodinger2016, lafontaineScatteringNLSPotential2016, banicaScatteringNLSDelta2016,danconaScatteringDefocusingNLS2023}.  However, it is not applicable to~\eqref{eqn:NLS} when $V$ is allowed to be time-dependent or nonrepulsive, since the solution may exhibit bound states or other nonscattering behavior.

\subsubsection{NLS with potential and asymptotic stability of solitons}  Beginning with the pioneering work of~\cite{germainNonlinearResonancesPotential2015}, there have been a number of works on the asymptotic behavior of nonlinear Schr\"odinger equations with potential~\cite{germainNonlinearSchrodingerEquation2018,naumkinSharpAsymptoticBehavior2016,naumkinNonlinearSchrodingerEquations2018,chen1dimensionalNonlinearSchrodinger2022,chen1dCubicNLS2024,stewartAsymptoticsCubic1D2024} culminating in proofs that solitons for a wide range of 1 dimension nonlinear Schr\"odinger equations are asymptotically stable under small, localized perturbations~\cite{chenLongtimeDynamicsSmall2023,collotAsymptoticStabilitySolitary2023,liAsymptoticStabilitySolitary2024a}.  The argument proceeds by writing the solution $u$ as the sum of a modulated soliton and a perturbation.  In a reference frame moving with the soliton, the perturbation solves a nonlinear Schr\"odinger equation with potential.  By adapting the method of space-time resonances introduced in~\cite{germainGlobalSolutions3D2008}, it is possible to show that small perturbations exhibit modified scattering.  A more complete summary of the method can be found in the recent survey article~\cite{germainReviewAsymptoticStability2024}.  This method gives very precise asymptotics; however, it necessarily requires the initial data to be a small, localized perturbation of a soliton and thus cannot be applied to nonlinear equations with generic large initial data.  Moreover, the method relies crucially on the linear spectral theory of Schr\"odinger operators with potential, so it is not clear how to adapt the argument to accommodate equations like~\eqref{eqn:NLS} where the potential is time dependent.  Even in the case of time-independent potentials, one-dimensional models containing internal modes have been stubbornly resistant to this style of analysis~\cite{delortLongtimeDispersiveEstimates2022,legerInternalModesRadiation2021}.

\subsubsection{Virial methods and local asymptotic stability of solitons} Beginning with the work of~\cite{kowalczykKinkDynamicsModel2017a}, there have been a number of recent works that have established local asymptotic stability for a number of models containing solitons~\cite{martelAsymptoticStabilitySmall2024,martelAsymptoticStabilitySolitary2023,riallandAsymptoticStabilitySolitary2024,riallandAsymptoticStabilitySolitons2024,cuccagnaAsymptoticStabilityGround2024,cuccagnaAsymptoticStabilityLine2024}.  The key ingredient in this argument is to prove an appropriate virial identity, which gives decay of solutions in an appropriate (interior) region of space-time.  This method has proved quite flexible, especially in applications to models that exhibit strongly nonlinear behavior.  However, the information it gives is inherently local in character.

\subsection{Main results}

We will assume that the potential and its derivative obey {uniform in time spatial} decay bounds
\begin{equation}\label{eqn:V-decay-hypo}
   \sup_{{x},t} |\jBra{x}^\sigma V(x,t)| \leq C
\end{equation}
and
\begin{equation}\label{eqn:dV-decay-hypo}
    \sup_{{x},t}|\jBra{x}^{\sigma+1} \partial_x V(x,t)| \leq C
\end{equation}

Our first result concerns the behavior of solutions that behave like a purely bound state, with no freely scattering radiation as $t\to \infty$.  \editadd{To state the result, we first recall some terminology related to propagation estimates.  For an operator $B$, we define the expectation of $B$ with respect to $u$ at time $t$ to be
\begin{equation}\label{eqn:prop-obs-def}
    \langle B \rangle_{t,u} = \langle Bu, u \rangle_{L^2}
\end{equation}
when the function $u$ is clear from context, we will often drop it and simply write $\langle B \rangle_t$.  The time evolution of a (possibly time-dependent) operator $B(t)$ for a solution to~\eqref{eqn:NLS} is given by
\begin{equation}\label{eqn:obs-deriv}
    \partial_t \langle B(t) \rangle_t = \langle D_H B(t) \rangle_t
\end{equation}
where $D_H B(t)$ is the Heisenberg derivative
\begin{equation}\label{eqn:Heis-deriv}
    D_H B(t) = [-i\Delta + iV + i|u|^{p-1}, B(t)] + \partial_t B(t)
\end{equation}
By judiciously choosing $B$ so that the commutator term in~\eqref{eqn:Heis-deriv} is positive (up to integrable perturbations), it is often possible to obtain global decay estimates, which we will call propagation estimates.  The celebrated Morawetz estimate for nonlinear Schr\"odinger equations is an important type of propagation estimate.  Observe that for $B(t) = |x|$, we have
\begin{equation}\label{eqn:gamma-def}
    D_H |x| = [-i\Delta, |x|] = 2\left(\frac{\sgn(x) \partial_x + \partial_x \sgn(x)}{2i}\right) =: 2 \gamma
\end{equation}
where $\langle \gamma \rangle_t$ is the Morawetz action.  Applying~\eqref{eqn:obs-deriv} and~\eqref{eqn:Heis-deriv} to $\langle \gamma \rangle_t$ (with $V$ repulsive) gives the standard Morawetz estimates.  The operator $\gamma$ can be understood to measure the degree to which solutions travel radially away from the origin.

For a solution $u_\text{free} = e^{it\Delta} u_0$ of the free Schr\"odinger equation, we have that 
\begin{equation}\label{eqn:gamma-free-lim}
    \lim_{t \to \infty} \langle \gamma \rangle_{t, u_\text{free}} = \lVert u \rVert_{\dot{H}^{1/2}}^2
\end{equation}   
Indeed, we have that
 $e^{-it\Delta} \frac{x}{t} e^{it\Delta} \to 2 D$ in the $H^1$ strong resolvent sense (see~\cite[Lemma 4.17]{liuLargeTimeAsymptotics2025}).  In particular, this implies that $f(e^{-it\Delta} \frac{x}{t} e^{it\Delta}) \to f(2D)$ for any continuous function $f$ (cf.~\cite[Theorem VIII.20]{reedMethodsModernMathematical1980}).  Thus, for any $\epsilon > 0$,
\begin{equation*}
    \slim_{t \to \infty} e^{-it\Delta} F\left(|x| \geq 2\epsilon t\right) \gamma F\left(|x| \geq 2\epsilon t\right) e^{it\Delta} =  F(|D| \geq \epsilon) |D| F(|D| \geq \epsilon) 
\end{equation*}
On the other hand, the strong minimum velocity bounds imply that
\begin{equation*}
    \lim_{\epsilon \to 0} \limsup_{t \to \infty} \lVert F\left(|x| \leq 2 \epsilon t\right) u_\text{free} \rVert_{H^1} = 0
\end{equation*}
so, taking $\epsilon \to 0$, we obtain~\eqref{eqn:gamma-free-lim}.  By adapting the argument, we also see that for any $\beta \in (0,1)$,
$$\lim_{t \to \infty} \langle F(|x| \geq t^\beta) \gamma F(|x| \geq t^\beta) \rangle_{t, u_\text{\editadd{free}}} = \lVert u_0 \rVert_{\dot{H}^{1/2}}^2$$
On the other hand, if $u_\text{bound} = e^{-it(-\Delta + V)} u_0$ for some $u_0$ in the discrete spectral subspace of $-\Delta + V$, then the RAGE theorem immediately implies that
$$\lim_{t \to \infty} \langle F(|x| \geq t^\beta) \gamma F(|x| \geq t^\beta) \rangle_{t, u_\text{\editadd{bound}}} = 0$$

By extending these arguments, we see that for a sufficiently nice time-independent potential $V$, if $u = e^{-it(-\Delta + V)} u_0$, then
\begin{equation}\label{eqn:nonrad-hypo}
    \lim_{t \to \infty} \langle F(|x| \geq t^\beta) \gamma F(|x| \geq t^\beta) \rangle_{t,u} = 0
\end{equation}
iff $u_0$ is in the discrete spectral subspace of $-\Delta + V$.  Such a precise characterization appear out of reach for~\eqref{eqn:NLS}; however, we are able to prove that any solution to~\eqref{eqn:NLS} which satisfies~\eqref{eqn:nonrad-hypo} must spread out in space more slowly than a free solution.
}

\begin{thm}\label{thm:nonrad-spreading}
    Suppose $\sigma \geq 2$, that $u$ is a bounded energy solution to~\eqref{eqn:NLS} with $p > 1$ and is \textit{nonradiative} in the sense that \editadd{there exists $\beta \in (1/3,1)$ such that~\eqref{eqn:nonrad-hypo} holds.}  If $|x|^{1/2}u_0 \in L^2$, then $\langle |x| \rangle_t \lesssim 1 + t^{1/2}$.  

    Furthermore, if $p < \editadd{5}$, then we have the improved bound $\langle |x| \rangle_t \lesssim 1 + t^\editadd{\frac{2(p-1)}{3p+1}}$\editadd{ for $p \in (7/3, 5)$ and $\langle |x| \rangle_t \lesssim 1 + t^{1/3+}$ for $1 < p < 7/3$.}
\end{thm}

\begin{rmk}
    Although the statement of the theorem only requires~\eqref{eqn:nonrad-hypo} to hold for an arbitrary $\beta \in (1/3, 1)$, we will see later in~\Cref{lem:all-beta-lem} that if~\eqref{eqn:nonrad-hypo} holds for a single $\beta \in (1/3, 1)$, then it holds for all $\beta$ in that range (see also~\cite[Section 4.3.2]{liuLargeTimeAsymptotics2025}).  In particular, we will be able to optimize over $\beta$ in our proof of~\Cref{thm:nonrad-spreading}.
\end{rmk}

\begin{rmk}
    It might seem paradoxical that decreasing $p$, which \textit{increases} the ability of the nonlinearity to disperse concentrated states, also \textit{decreases} in the speed at which nonradiative states spread.  This can be understood by noting that outgoing waves that are located far from the origin and have nonnegligible energy will be traveling fast enough to scatter freely.  Since the defocusing nonlinearity aids in this scattering, any weakly bound part of the solution must stay closer to the origin for small $p$ to avoid becoming radiation.
\end{rmk}

Next, we consider the problem~\eqref{eqn:NLS} with mass supercritical nonlinearity $p > 5$, and prove that the solutions split asymptotically into a free wave and a localized part which spreads slowly in time:

\begin{thm}\label{thm:wave-op-and-bdd-state}
    Suppose $\sigma \geq 2$ and $p > 5$  Then, any solution to~\eqref{eqn:NLS} satisfying~\eqref{eqn:bdd-energy-hypo} can be written as
    \begin{equation}\label{eqn:thm2-decomp}
        u(t) = e^{it\Delta} u_+ + \uloc(t)
    \end{equation}
    where $u_+ \in H^1$, and for any $\kappa > 1/2,$ and $\mu > \max(\frac{1}{3}, \frac{1}{\sigma})$, the weakly localized part $\uloc$ satisfies
    \begin{equation}\label{eqn:thm2-mass}
        \lim_{t \to \infty} \lVert F(|x| \geq t^{\kappa}) \uloc(t) \rVert_{L^2} = 0
    \end{equation}
    and
    \begin{equation}\label{eqn:thm2-energy}
       \lim_{t \to \infty} \lVert F(|x| \geq t^{\mu}) \partial_x \uloc(t) \rVert_{L^2} = 0
    \end{equation}
\end{thm}

{We note that for a free solution $u_\text{free} = e^{it\Delta} u_0$, we have that
$$\lim_{t \to \infty} \lVert F(|x| \leq t^{\kappa}) u_\text{free}(t) \rVert_{L^2} = \lim_{t \to \infty} \lVert F(|x| \leq t^{\mu}) \partial_x u_\text{free}(t) \rVert_{L^2} = 0$$
(this follows from~\Cref{lem:st-ph-lemma-high-freq} in~\Cref{sec:disp-est}).  In particular, this implies that the decomposition~\eqref{eqn:thm2-decomp} reflects an increasing separation between the free part $e^{it\Delta} u_+$ and the weakly localized part $\uloc(t)$ in physical space as $t \to \infty$.}

\editadd{We now briefly discuss the optimality of the parameters $\kappa$ and $\mu$.  At least for $d \geq 5$, it is known that for $\kappa < 1/2$, the model equation~\eqref{eqn:Schro-tdp} admits solutions which decompose asymptotically into free radiation and a localized part as in~\eqref{eqn:thm2-decomp}, but such that $\lim_{t \to \infty} F(|x| \geq t^\kappa) \uloc(t) \rVert_{L^2} \neq 0$; see~\cite{soffer2022existence}.  In particular, this suggests that $\kappa = 1/2$ is the optimal endpoint.  Constructing such solutions in dimension $1$ (even in the absence of a nonlinearity) is likely quite difficult due to the weak dispersion; however, we expect that $\kappa = 1/2$ is still optimal in dimension $1$.

It is more difficult to assess the optimality of $\mu$, especially for a nonlinear equation like~\eqref{eqn:NLS}.  However, based on analogous results~\cite{taoAsymptoticBehaviorLarge2004b} for the cubic NLS, we expect that in fact the energy of the localized part $\uloc$ does not spread, in the sense that
\begin{equation*}
    \limsup_{t \to \infty} \lVert F(|x| \geq R) \uloc(t) \rVert_{L^2} = o(1)
\end{equation*}
as $R \to \infty$.}

\begin{rmk} %TODO: Is it worth giving this more general case?
    In the interest of exposition, we have stated~\Cref{thm:nonrad-spreading} and~\Cref{thm:wave-op-and-bdd-state} with the short-range hypothesis $\sigma \geq 2$.  With a slight modification to the argument, we could also derive similar results (but with worse localization) under the optimal short-range hypothesis $\sigma > 1$.
\end{rmk}

\begin{rmk}
    The restriction $\mu > \frac{1}{3}$ arises because in the interaction Morawetz estimates we must commute the Laplacian through a spatial cutoff, producing an additional effective potential term of size $\jBra{x}^{-3}$.  This estimate is only necessary to control the defocusing nonlinearity, so our arguments also show that the rate at which energy spreads in the time-dependent linear problem is determined by the decay of the potential.
\end{rmk}

\begin{rmk}
    The proofs of~\Cref{thm:nonrad-spreading,thm:wave-op-and-bdd-state} ultimately rely on the exterior Morawetz estimates that we present in~\Cref{sec:PRES}.  Since Morawetz estimates hold for solutions in $H^{1/2}$, it is likely that~\eqref{eqn:bdd-energy-hypo} could be weakened to $\lVert u \rVert_{L^\infty_t H^{s}_x} < \infty$ for some $s > 1/2$ (and possibly $s = 1/2$, although one would have to be careful about the failure of the endpoint Sobolev embedding).  In the interest of expositional clarity, we do not pursue these improvements here.
\end{rmk}

\subsection{Difficulties and a sketch of the proof}
In one dimension, the dispersive decay of $e^{it\Delta}$ is too weak to apply the methods of~\cite{soffer2023soliton,liuLargeTimeAsymptotics2025,soffer2022large} to~\eqref{eqn:NLS} because the term $|u|^{p-1}u$ does not belong to any weighted $L^p$ space.  We will overcome this difficulty by deriving decay estimates in the exterior region $|x| \geq t^{\beta}$, which provide sufficient control far from the origin to prove~\Cref{thm:nonrad-spreading,thm:wave-op-and-bdd-state}.

We begin with~\Cref{thm:nonrad-spreading}.  To prove this result, we will show that $u(t)$ satisfies a Morawetz-type estimate in the exterior region $|x| \geq t^\beta$ for any $\beta \in (1/3, 1)$.  The advantage of this exterior cutoff is twofold.  First, it allows us to turn the \textit{spatial} decay of $V$ into decay in \textit{time}, making it negligible for the purposes of the estimate.  Second, it eliminates the term 
\begin{equation}\label{eqn:Morawetz-bad-term}
    [{-i}\Delta, \gamma] = {\frac{1}{2}}[{-i}\Delta, [{-i}\Delta, |x|]]
\end{equation}
which would otherwise prevent us from getting any estimate in dimension $1$.  On the other hand, it also introduces the terms 
$$[-\Delta, F(|x| \geq t^\beta)] i\gamma F(|x| \geq t^\beta) +   F(|x| \geq t^\beta) i\gamma [-\Delta, F(|x| \geq t^\beta)]$$
which reflect the fact that mass can flow across the boundary $|x| \sim t^\beta$.  These `quantum correction' terms do not have a sign in general.  However, after carefully symmetrizing, it can be shown that they have the right sign up to an error of size $O(t^{-3\beta})$, and so can be treated perturbatively if $\beta > 1/3$.

Using these Morawetz estimates, we adapt the argument of~\cite{liuLargeTimeAsymptotics2025} to show that $u(t)$ is localized to the region $|x| \leq t^{1/2}$ as $t \to \infty$.  A key novelty here is the use of the additional term in the Morawetz estimate from the defocusing nonlinearity to obtain improved localization for equations with $p < 5$.

To prove~\Cref{thm:nonrad-spreading}, the key observation is that the exterior Morawetz estimate can be used to upgrade the \textit{qualitative} convergence in the nonradiative hypothesis~\eqref{eqn:nonrad-hypo} into a \textit{quantitative} decay rate in time for the `exterior Morawetz current' $\langle F(|x| \geq t^\beta) \gamma F(|x| \geq t^\beta) \rangle_t$.  Since $\gamma$ is the Heisenberg derivative of the observable $|x|$, this decay rate can be integrated to give a bound for $\langle |x| \rangle_t$, and optimizing in $\beta$ gives the bound $\langle |x| \rangle_t \lesssim \jBra{t}^{1/2}$.   Moreover, by using the extra decay coming from the nonlinear portion of the Morawetz estimate, we can get better decay for $\langle F(|x| \geq t^\beta) \gamma F(|x| \geq t^\beta) \rangle_t$ (and hence better localization of $u(t)$) in the \editadd{mass supercritical case} $p < \editadd{5}$.  %This agrees with the intuition that the nonlinearity in~\eqref{eqn:NLS} should be long-range for $p \leq 3$ (see~\cite[Chapter 3]{taoNonlinearDispersiveEquations2006}).

To prove~\Cref{thm:wave-op-and-bdd-state}, we first must show that the free channel wave operator $u_0 \mapsto u_+$ exists.  The idea here is to introduce a projection $\Jfree$ such that
$$\slim_{t \to \infty} (I-\Jfree) e^{it\Delta} = 0$$
which is supported near the propagation set for the free flow in phase space.  A natural choice is to choose $\Jfree$ to localize inside the free flow region $x = 2\xi t + o(t)$, where $\xi$ is the frequency variable.  Having done so, the existence of the free channel is equivalent to showing that the limit
\begin{equation}\label{eqn:scat-slim-intro}
    u_+ = \slim_{t \to \infty} e^{-it\Delta} \Jfree u(t)
\end{equation}
exists.  As explained above, the methods of~\cite{soffer2022large} do not apply here, since the nonlinear term does not obey weighted decay estimates.  To overcome this obstacle, we introduce a new exterior interaction Morawetz estimate, which controls the nonlinearity away from the origin.  Interaction Morawetz estimates were previously introduced in~\cite{collianderGlobalExistenceScattering2004} to prove global well-posedness for defocusing nonlinear Schr\"odinger equations in dimension $3$, and have become a standard tool in proving global well-posedness and scattering.  Obtaining these estimates in lower dimensions is more difficult, since ordinary Morawetz estimates fail due to the singular term~\eqref{eqn:Morawetz-bad-term}.  The first interaction Morawetz estimates in lower dimensions were obtained in~\cite{fangGlobalExistenceRough2007,collianderImprovedInteractionMorawetz2007} in dimension $2$, and hold only locally in time.  Later, these results were improved \editadd{and extended to dimension one} in~\cite{collianderTensorProductsCorrelation2009,planchonBilinearVirialIdentities2009}, yielding the estimate
\begin{equation}\label{eqn:inter-Morawetz-intro}
    \lVert \partial_x |u|^2 \rVert_{L^2_{t,x}}^2 \lesssim \lVert u \rVert_{H^1}^4
\end{equation}
We prove that this estimate holds even in the presence of a nonrepulsive potential provided we localize to $|x| \geq t^{\beta}$ for $\beta \in (1/3,1)$.  This estimate can be interpolated with mass conservation to obtain a pointwise decay estimate in the exterior region, allowing us to prove the existence of the strong limit~\eqref{eqn:scat-slim-intro} defining $u_+$.

To control the remainder $\urem = (I-\Jfree) u(t)$, we note that in addition to the standard Duhamel representation using $u_0$, we also have a representation `from the future' coming from the fact that $e^{-it\Delta} \urem \rightharpoonup 0$.  By combining an incoming/outgoing decomposition similar to the one used in~\cite{taoAsymptoticBehaviorLarge2004b} with some careful stationary phase arguments, we are able to prove that the mass of the solution is concentrated in the region $|x| \lesssim t^{1/2+}$ and that the energy is concentrated in $|x| \lesssim t^{1/3+}$.

%TODO: Write the plan for the rest of the paper

\section{Preliminaries}

{\subsection{Definitions}\label{sec:def}

We use the notation $A \lesssim B$ to mean that $A \leq CB$ for some arbitrary but fixed constant $C>0$.  If $C$ is allowed to depend on the parameters $P_1, P_2, \cdots, P_n$, we write
$A \lesssim_{P_1,P_2,\cdots,P_n} B$.  Similarly, $A \sim B$ means $\frac{1}{C} B \leq A \leq CB$ for some arbitrary but fixed $C > 0$.

In our arguments, we will frequently encounter products of differential operators and multiplication operators $G: \phi(x) \mapsto G(x)\phi(x)$.  To fix notation, we denote by $P(D_x) G(x)$ the operator mapping $\phi$ to $P(D_x) (G(x) \phi(x))$, while $(P(D_x) G(x))$ is the application of $P(D_x)$ to $G(x)$.  (In particular, $(P(D_x) G(x))$ is a multiplication operator.)  Thus, for example, 
$$\partial_x f(x) \phi = \partial_x(f(x) \phi) = (\partial_x f(x))\phi + f(x) \partial_x \phi$$

We now define the smooth cut-off functions $F(x \geq A)$ and $F(x \leq A)$.  Let
\begin{equation*}
    H(x) := \begin{cases}
        e^{-1/x} &  x > 0\\
        0 & x \leq 0
    \end{cases}
\end{equation*}
We define the function $F(x  \editadd{\geq} 1)$ by
\begin{equation*}
    F(x \editadd{\geq} 1) := \begin{cases} 
        0 & x \leq 1/2\\
        \exp\left(-\frac{H(2-x)}{x-1/2}\right) & x > 1/2
    \end{cases}
\end{equation*}
In particular, $F(x \leq 1)$ is smooth.  Moreover, since $G(x) := -\frac{H(2-x)}{x-1/2}$ is strictly increasing for $1/2 < x < 2$, we see that $F$ is strictly increasing on $[1/2,2]$. For every positive real number $A$, we define
\begin{equation}\label{eqn:F-leq-def}\begin{split}
    F(x \geq A) :=& F\left( \frac{x}{A} \geq 1\right)\\
    F(x \leq A) :=& 1 - F(x \geq A)
\end{split}\end{equation}
when differentiating these functions, we will use the notation $F^{(n)}(x \geq A)$ to refer to the $n$th derivative of $F(X \geq 1\editadd{)}$ evaluated at $\frac{x}{A}$, and similarly for $F^{(n)}(x \leq A)$.  \editadd{In particular, this means that
\begin{equation*}
    \partial_x F(x \geq A) = \frac{1}{A} F'(x \geq A)
\end{equation*}}

In the future, we will need to use the fact that $\sqrt{F(x \geq A)}$ and $\sqrt{F'(x \geq A)}$ are smooth, bounded functions.
\begin{lemma}
    For every $A > 0$, the functions $\sqrt{F(x \geq A)}$ and $\sqrt{F'(x \geq A)}$ are smooth.
\end{lemma}
\begin{proof}
    We note that it suffices to prove the result for $A = 1$.  We begin by proving that $\sqrt{F(x \geq 1)}$ is smooth.  From the definition, we see that
    \begin{equation*}\begin{split}
        \partial_x \sqrt{F(x \geq 1)} = \partial_x \sqrt{1 - F(x \leq 1)}
        = \frac{-F'(x \leq 1)}{2 \sqrt{1 - F(x \leq 1)}}
        = \begin{cases}
            0 & x < 1/2\\
            -\frac{G'(x) \exp(G(x))}{2\sqrt{1 - \exp(G(x))}} & x \geq 1/2
        \end{cases}
    \end{split}\end{equation*}
    This expression is smooth for $x < 2$ by the definition of $G$, and vanishes for $x > 2$.  Thus, the only possible singularity is $x = 2$, and the function will be $C^1$ provided that
    \begin{equation*}
        \lim_{x \to 2^-} \partial_x \sqrt{F(x \geq 1)} = 0
    \end{equation*}
    By Taylor expanding the exponentials and using that $G(x) \to 0$ as $x \to 2$, we see that
    \begin{equation*}
        \partial_x \sqrt{F(x \geq 1)} = -\frac{G'(x) (1 + o(1))}{\sqrt{G(x)(1+ o(1))}} = \frac{G'(x)}{\sqrt{G(x)}}(-1+o(1))
    \end{equation*}
    Now, for any $k \geq 0$
    \begin{equation}\label{eqn:G-deriv-expr}
        G^{(k)}(x) = \frac{d^k}{dx^k} \begin{cases}
            0 & x \geq 2\\
            \frac{\exp\left(-\frac{1}{2-x}\right)}{x-1/2} & x < 2
        \end{cases} = \begin{cases}
            0 & x \geq 2\\
            P_k((2-x)^{-1}, (x-1/2)^{-1}) \exp\left(-\frac{1}{2-x}\right) & x < 2\\
        \end{cases}
    \end{equation}
    where $P_k(X,Y)$ is a polynomial in $X$ and $Y$.  In particular, we see that
    $$\lim_{x \to 2^{-}} \frac{G'(x)}{\sqrt{G(x)}} = \lim_{x \to 2^-} P_1((2-x)^{-1}, (x-1/2)^{-1}) \exp\left(-\frac{1}{2(2-x)}\right) = 0$$
    which shows that $\sqrt{F(x \geq 1)}$ is $C^1$.  In general, we have that
    \begin{equation*}
        \frac{d^{k}}{dx^k} \sqrt{F(x \geq 1)} = \sum_{n=1}^k \frac{\mathfrak{F}_{n,k}(x)}{(1 - F(x \leq 1))^{n-1/2}}
    \end{equation*}
    where $\mathfrak{F}_{n,k}(x)$ is of the form
    $$\mathfrak{F}_{n,k}(x) = \sum_{j = 1}^{N_{n,k}} C_{j,n,k}\prod_{i = 1}^{n}F^{(a_{i,j,n,k})}(x \leq 1)$$
    for some $N_{n,k}$, $C_{j,n,k}$, and $a_{i,j,n,k}$ with $1 \leq a_{i,j,n,k} \leq n$.  As in the case $k = 1$, $\sqrt{F(x \geq 1)}$ will be $C^k$ provided that $\lim_{x \to 2^{-}} \frac{d^{k}}{dx^k} \sqrt{F(x \geq 1)} = 0$.  By noting that
    \begin{equation*}
        F^{(k)}(x \leq 1) = \mathfrak{G}_{k}(x) \exp(G(x))
    \end{equation*}
    where $\mathfrak{G}_{n,k}$ is a polynomial in $G'(x), G''(x), \cdots, G^{(k)}(x)$, we see by~\eqref{eqn:G-deriv-expr} that
    \begin{equation*}\begin{split}
         \frac{d^{k}}{dx^k} \sqrt{F(x \geq 1)} =& \sum_{n=1}^k \sum_{j = 1}^{N_{n,k}} \frac{C_{j,n,k}\prod_{i = 1}^{n}F^{(a_{i,j,n,k})}(x \leq 1)}{(1 - F(x \leq 1))^{n-1/2}}\\
         =& \sum_{n=1}^k \sum_{j = 1}^{N_{n,k}} (C_{n,j,k} + o(1)) \mathfrak{P}_{j,n,k}(x) \sqrt{\exp\left(-\frac{1}{2-x}\right)}
    \end{split}\end{equation*}
    where $\mathfrak{P}_{j,n,k}(x)$ is a polynomial in $(2-x)^{-1}$ and $(x-1/2)^{-1}$, and on the last line we have used the Taylor expansion $\exp(G(x)) = 1 + O(G(x))$ as $x \to 2$.  It follows that 
    $$\lim_{x \to 2^-} \frac{d^{k}}{dx^k} \sqrt{F(x \geq 1)} = 0$$
    so $\sqrt{F(x \geq 1)} \in C^k$ for all $k$.

    Turning to $\sqrt{F'(x \geq 1)}$, we observe that since $F'(x \geq 1)$ is nonzero on $(1/2, 2)$, we will have shown that $\sqrt{F'(x \geq 1)} \in C^\infty$ once we show that for each $k \geq 1$,
    $$\lim_{x \to 1/2^+} \frac{d^k}{dx^k} \sqrt{F'(x \geq 1)} = \lim_{x \to 2^-} \frac{d^k}{dx^k} \sqrt{F'(x \geq 1)} = 0$$
    The argument for $x \to 2^-$ is a straightforward modification of the one we give above, while the argument for $x \to 1/2^+$ is similar.
\end{proof}

We also denote by $\tilde{F}_M(x \geq A)$ the function
$$\tilde{F}_M(x \geq A) = F\left(x + a_M \geq b_M A\right)$$
$a_M$ and $b_M$ are chosen so that 
\begin{equation*}
    \frac{M^{-1} - a_M}{b_M} = 1/2,\qquad \frac{M-a_M}{b_M} = 2
\end{equation*}
With these choices, we find that $\supp \tilde{F}_M(x \geq A) = [A/M, \infty)$ and $\supp \tilde{F}'(x \geq A) = [A/M, AM]$.  In particular, by choosing $M$ appropriately, we can guarantee that $\tilde{F}_M(x \geq A)$ is nonzero in any region $|x| \gtrsim A$ and $\tilde{F}'(x \geq A)$ is nonzero in any region $|x| \sim A$ (with fixed but arbitrary implicit constants).  In our work, we will write $\tilde{F}(x \geq A)$ to denote $\tilde{F}_M(x \geq A)$, where $M$ is chosen to guarantee these nonvanishing properties for an appropriate range of $x$ values.}
%TODO: See if I need to define $\tilde{F}$ similarly, and think of a quick way to do this.

\subsection{Dispersive estimates} \label{sec:disp-est}

We will make frequent use of the $L^1 \to L^\infty$ dispersive estimate
\begin{equation}
    \lVert e^{it\Delta} \rVert_{L^1 \to L^\infty} \lesssim |t|^{-1/2}
\end{equation}
As is well known, this estimate implies the Strichartz estimates.  In particular, we have the dual Strichartz estimates:
\begin{thm}\label{thm:dual-Strichartz}
    {For any $\Phi \in L^{q'}_t L^{r'}_x$ with $q', r' \leq 2$ satisfying
    $$\frac{1}{r'}+{\frac{2}{q'}} = \frac{5}{2}$$
    we have that}
    \begin{equation*}
        \left\lVert \int e^{-is\Delta} {\Phi}(x,s)\;ds \right\rVert_{L^2} \lesssim \lVert {\Phi}(x,t) \rVert_{L^{q'}_t L^{r'}_x}
    \end{equation*}
\end{thm}

We now give a number of stationary phase estimates:
\begin{lemma}\label{lem:st-ph-lemma-high-freq}
    For any positive exponents $\alpha$, $\beta$, and $\delta$ satisfying 
    \begin{equation*}
        \delta < \min(1/2, 1-\alpha, 1-\beta)
    \end{equation*}
    {and any $N \geq 0$}, the following inequality holds for $t \geq 1$:
    \begin{equation*}
        \left\lVert F(|x| \leq t^\alpha) e^{it\Delta} F(|D| \geq t^{-\delta}) F(|x| \leq t^\beta) \right\rVert_{L^2 \to L^2} \lesssim_{N,\alpha,\beta,\delta} t^{-N}
    \end{equation*}
\end{lemma}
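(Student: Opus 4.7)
The strategy is non-stationary phase applied to the Schwartz kernel of the composite operator. I would write this kernel as
\begin{equation*}
K(x,y) = F(|x| \leq t^\alpha)\, F(|y| \leq t^\beta)\, I(x,y,t), \qquad I(x,y,t) = \frac{1}{2\pi}\int_{\bbR} F(|\xi| \geq t^{-\delta}) e^{i\phi(\xi)}\, d\xi,
\end{equation*}
using the Fourier representation $e^{it\Delta} = \mathcal{F}^{-1} e^{-it\xi^2}\mathcal{F}$, where $\phi(\xi) = \xi(x-y) - t\xi^2$. The basic observation is that on the support of the cutoffs, $|x-y| \leq t^\alpha + t^\beta \lesssim t^{\max(\alpha,\beta)}$, while $|\xi| \geq t^{-\delta}/2$ gives $|2t\xi| \geq t^{1-\delta}$. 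The hypotheses $\delta < 1 - \alpha$ and $\delta < 1 - \beta$ then yield $t^{1-\delta} \gg t^{\max(\alpha,\beta)}$ for $t$ large, so $|\phi'(\xi)| = |(x-y) - 2t\xi| \gtrsim t|\xi|$ uniformly on the support.

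Next I would integrate by parts $N$ times against the first-order operator $L = (i\phi')^{-1}\partial_\xi$, which satisfies $Le^{i\phi} = e^{i\phi}$. Because $\phi$ is quadratic ($\phi''' = 0$), a direct induction shows that $(L^*)^N F(|\xi| \geq t^{-\delta})$ is a finite sum of terms of the form
\begin{equation*}
c_{a,b}\,\frac{(\phi'')^a}{(\phi')^{N+a}}\,\partial_\xi^b F(|\xi| \geq t^{-\delta}), \qquad a + b = N.
\end{equation*}
Using $|\phi''| = 2t$, $|\phi'| \gtrsim t|\xi|$, and $|\partial_\xi^b F(|\xi| \geq t^{-\delta})| \lesssim t^{b\delta}$ (supported in $|\xi| \sim t^{-\delta}$ for $b \geq 1$), a short bookkeeping shows that \emph{every} such term, when integrated in $\xi$, contributes at most $t^{-N(1-2\delta) - \delta}$. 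Since $\delta < 1/2$, taking $N$ large proves $|I(x,y,t)| \lesssim_M t^{-M}$ for every $M$, uniformly on the support of the spatial cutoffs.

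Finally I would close with the Hilbert--Schmidt bound, using that the support of $K$ has measure $\lesssim t^{\alpha + \beta}$:
\begin{equation*}
\lVert T \rVert_{L^2 \to L^2} \leq \lVert K \rVert_{L^2(\bbR^2)} \lesssim t^{(\alpha+\beta)/2} \sup_{x,y} |K(x,y)| \lesssim_M t^{(\alpha+\beta)/2 - M},
\end{equation*}
which gives arbitrary polynomial decay after choosing $M$ large. The only subtle step is the second one: the terms where many derivatives fall on the frequency cutoff are localized to the thin band $|\xi| \sim t^{-\delta}$, where $|\phi'|$ achieves its minimum, so each such derivative costs $t^\delta$ while each factor $1/\phi'$ gains only $t^{-(1-\delta)}$. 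The condition $\delta < 1/2$ arises precisely as the break-even point ensuring that each integration by parts produces net decay of size $t^{-(1-2\delta)}$.
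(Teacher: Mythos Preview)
Your proof is correct and follows essentially the same non-stationary phase argument as the paper. The only difference is in packaging: the paper applies the operator to a test function $\phi$, writes the phase as $-t\xi^2+x\xi$, and lets the integration-by-parts derivatives fall on $\widehat{F(|y|\leq t^\beta)\phi}$ (picking up factors $t^{c\beta}$ from moment bounds on $\psi$), whereas you absorb $e^{-iy\xi}$ into the phase, bound the Schwartz kernel pointwise, and close with a Hilbert--Schmidt estimate; both routes rest on the same lower bound $|2t\xi-(x-y)|\gtrsim t|\xi|$ and the break-even condition $\delta<1/2$.
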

\begin{proof}
    For $\phi \in L^2$, define the operator $I$ by
    \begin{equation*}\begin{split}
        I(\phi) =& F(|x| \leq t^\alpha) e^{it\Delta} F(|D| \geq t^{-\delta}) F(|x|\leq t^\beta) \phi(x)\\
        =& \frac{F(|x| \leq t^\alpha)}{\sqrt{2\pi}} \int e^{-it\xi^2+ix\xi} F(|D| \geq t^{-\delta}) \hat\psi(\xi)\;d\xi
    \end{split}\end{equation*}
    where $\psi = F(|x| \leq t^\beta)\phi$.  By integrating by parts $K$ times using the identity
    \begin{equation*}
        \frac{i}{2t\xi - x} \partial_\xi e^{-it\xi^2+ix\xi} = e^{-it\xi^2+ix\xi}
    \end{equation*}
    we find that
    \begin{equation*}\begin{split}
        I(\phi) =& \frac{F(|x| \leq t^\alpha)}{\sqrt{2\pi}} \int e^{-it\xi^2+ix\xi} \left(\partial_\xi \frac{i}{2t\xi - x}\right)^K\left(F(|{\xi}| \geq t^{-\delta}) \hat{\psi} \right)\;d\xi\\
        =& \sum_{a + b + c = {K}} C_{a,b,c}  F(|x| \leq t^\alpha) \int e^{-it\xi^2} \frac{t^a}{(2t\xi-x)^{K+a}}  t^{b\delta} F^{{(b)}}(|{\xi}| \geq t^{-\delta}) \partial_\xi^c \hat{\psi} \;d\xi\\
        =:& \sum_{a + b + c = K} c_{a,b,c} F(|x| \leq t^\alpha) I_{a,b,c}(\phi)
    \end{split}\end{equation*}
    for some constants $C_{a,b,c} \in \bbC$.  Now, for $|x| \lesssim t^\alpha$ and $|\xi| \gtrsim t^{-\delta}$, our hypothesis that $\alpha < 1 - \delta$ implies that
    \begin{equation*}
        |2t\xi - x| \gtrsim t|\xi|
    \end{equation*}
    so
    \begin{equation*}\begin{split}
        | I_{a,b,c}({\phi}) | \lesssim& t^{-K} t^{b\delta} \lVert |\xi|^{-K - a}F^{(b)}(|\xi| \geq t^{-\delta})\rVert_{L^2_\xi} \lVert x^c F(|x| \leq t^\beta) \phi \rVert_{L^2_x}\\
        \lesssim& t^{-K} t^{\delta(K + a + b - 1/2)} t^{c\beta} \lVert \phi \rVert_{L^2}\\
        \lesssim& t^{-\delta/2} t^{-K(1 -  \max(2\delta, \delta + \beta))}\lVert \phi \rVert_{L^2}
    \end{split}\end{equation*}
    By H\"older's inequality, we find that
    \begin{equation}
        \lVert F(|x| \leq t^\alpha) I_{a,b,c}(\phi) \rVert_{L^2} \lesssim t^{(\alpha - \delta)/2} t^{-K(1 -  \max(2\delta, \delta + \beta))}\lVert \phi \rVert_{L^2}
    \end{equation}
    Thus, provided $\max(2\delta, \delta + \beta) < 1$, we can choose $K$ sufficiently large that $I(\phi)$ decays at a rate of at least $t^{-N}$ in $L^2$.
\end{proof}

\begin{lemma}\label{lem:st-ph-lemma-low-freq}
    For any positive exponents $\alpha$, $\beta$, and $\delta$ satisfying 
    \begin{equation*}
        \beta > \max(\alpha, \delta, 1-\delta)
    \end{equation*}
    {and any $N \geq 0$,} the following inequality holds for $t \geq 1$:
    \begin{equation*}
        \left\lVert F(|x| \geq t^\beta) e^{it\Delta} F(|D| \leq t^{-\delta}) F(|x| \leq t^\alpha) \right\rVert_{L^2 \to L^2} \lesssim_{N,\alpha,\beta,\delta} t^{-N}
    \end{equation*}
\end{lemma}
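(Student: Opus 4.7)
My plan is to mirror the integration-by-parts argument of~\Cref{lem:st-ph-lemma-high-freq}, but exploiting the spatial cutoff $F(|x| \geq t^\beta)$ in place of the frequency cutoff to bound the phase derivative from below. Writing $\psi = F(|x| \leq t^\alpha) \phi$, I express the operator as
\begin{equation*}
    I(\phi)(x) = \frac{F(|x| \geq t^\beta)}{\sqrt{2\pi}} \int e^{-it\xi^2 + ix\xi}\, F(|\xi| \leq t^{-\delta})\, \hat{\psi}(\xi)\, d\xi.
\end{equation*}
On the joint support of $F(|x| \geq t^\beta)$ and $F(|\xi| \leq t^{-\delta})$ we have $|2t\xi| \lesssim t^{1-\delta}$, which is dominated by $|x| \geq t^\beta$ as soon as $\beta > 1-\delta$. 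Hence the derivative of the phase obeys $|x - 2t\xi| \gtrsim |x| \gtrsim t^\beta$ throughout the region of interest.

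I then integrate by parts $K$ times using the identity $\tfrac{1}{i(x-2t\xi)}\partial_\xi e^{-it\xi^2 + ix\xi} = e^{-it\xi^2+ix\xi}$. Each derivative may fall in one of three places: on $(x - 2t\xi)^{-n}$, producing a factor $t/|x-2t\xi| \lesssim t^{1-\beta}$; on $F(|\xi| \leq t^{-\delta})$, producing a factor $t^\delta$; or on $\hat{\psi}$, which by Plancherel is absorbed as a power of $x$ acting on $\psi$ and contributes at most $t^\alpha$. Thus the three hypotheses $\beta > 1-\delta$, $\beta > \delta$, and $\beta > \alpha$ ensure that every $\xi$-derivative contributes a factor no worse than $t^{\eta}$ with $\eta := \max(1-\beta, \delta, \alpha) < \beta$.

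Estimating the resulting oscillatory integral pointwise by Cauchy--Schwarz in $\xi$ (the $\xi$-support contributes $t^{-\delta/2}$, while $\lVert \partial_\xi^c \hat\psi \rVert_{L^2_\xi} \lesssim t^{c\alpha} \lVert \phi \rVert_{L^2}$) and then taking $L^2_x$ over $|x| \geq t^\beta$ (which contributes $t^{-\beta(K-1/2)}$ from integrating $|x|^{-2K}$), I obtain a bound of the form
\begin{equation*}
    \lVert I(\phi) \rVert_{L^2} \lesssim t^{K(\eta - \beta) + C(\alpha,\beta,\delta)} \lVert \phi \rVert_{L^2}
\end{equation*}
for a constant $C$ independent of $K$. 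Since $\eta - \beta < 0$, choosing $K$ sufficiently large yields the claimed $t^{-N}$ decay.

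The only substantive bookkeeping is verifying the lower bound $|x-2t\xi| \gtrsim t^\beta$ and confirming that no single derivative in the integration-by-parts can produce growth faster than $t^\beta$, which is where all three conditions on $\beta$ are used. Beyond that, the argument is essentially identical in structure to~\Cref{lem:st-ph-lemma-high-freq}, and I do not anticipate any further obstacle.
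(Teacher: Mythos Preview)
Your proposal is correct and follows essentially the same approach as the paper's own proof: both integrate by parts $K$ times in $\xi$, use $\beta > 1-\delta$ to obtain the lower bound $|x-2t\xi| \gtrsim |x| \gtrsim t^\beta$, and then track the three types of derivative contributions (on the denominator, on the frequency cutoff, on $\hat\psi$) to arrive at a bound of the form $t^{K(\eta-\beta)+C}\lVert\phi\rVert_{L^2}$ with $\eta=\max(1-\beta,\delta,\alpha)<\beta$. The only cosmetic difference is that the paper keeps the full power $|x|^{-(K+a)}$ through the $L^2_x$ integration rather than splitting off the extra $a$ factors as $t^{a(1-\beta)}$ first, but the resulting estimates are identical.
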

\begin{proof}
    As in the proof of~\Cref{lem:st-ph-lemma-high-freq}, we define:
    \begin{equation*}\begin{split}
        I(\phi) =& F(|x| \geq t^\beta) e^{it\Delta} F(|D| \leq t^{-\delta}) F(|x| \leq t^\alpha)\phi\\
        =& \sum_{a + b + c = K} C_{a,b,c}  F(|x| \geq t^\beta) \int e^{-it\xi^2+ix\xi} \frac{t^a}{(2t\xi-x)^{K+a}}  t^{b\delta} \partial_\xi^b F(|D| \leq t^{-\delta}) \partial_\xi^c \hat{\psi} \;d\xi\\
        =:& \sum_{a + b + c = K} C_{a,b,c}  F(|x| \geq t^\beta) I_{a,b,c}(\phi)
    \end{split}\end{equation*}
    where here $\psi = F(|x| \leq t^\alpha)\phi$.  By hypothesis, $\beta > 1-\delta$, so under our microlocalization
    \begin{equation*}
        |2t\xi - x| \gtrsim |x|
    \end{equation*}
    Thus, we find that
    \begin{equation*}\begin{split}
        \lVert F(|x| \geq t^\beta) I_{a,b,c}(\phi) \rVert_{L^2} \lesssim& \left\lVert\frac{t^{a+b\delta}}{|x|^{K+a}}F(|x| \geq t^\beta) \right\rVert_{L^2} \int |\partial_\xi^b F(|D| \leq t^{-\delta}) \partial_\xi^c \hat{\psi}| \;d\xi\\
        \lesssim& t^{a + (b-1/2)\delta -\beta(K + a - 1/2)}\lVert \partial_\xi^c \hat{\psi} \rVert_{L^2}\\
        \lesssim& t^{a + (b-1/2)\delta + c \alpha -\beta(K + a - 1/2)}\lVert \phi \rVert_{L^2}
    \end{split}\end{equation*}
    Substituting into the expression for $I(\phi)$, we find that
    \begin{equation*}
        \lVert I(\phi) \rVert_{L^2} \lesssim_{K,\alpha,\beta,\delta} t^{-\beta(K - 1/2) - \delta/2} t^{\max(1-\beta, \alpha, \delta)K} \lVert \phi \rVert_{L^2}
    \end{equation*}
    {Now, our hypotheses imply that $\beta > \max(\delta, 1-\delta) \geq 1/2$, so $\beta > 1/2 > 1-\beta$, and} the result follows.
\end{proof}

\subsection{Commutator estimates}

Commutator estimates play an important role in proving propagation estimates.  To begin, we record an estimate for commuting physical- and Fourier-space localization:
\begin{lemma}\label{lem:phys-Fourier-comm}
    If $F$ and $G$ are smooth cut-off functions supported near zero, then
    \begin{equation}\label{eqn:phys-Fourier-comm-1}
        [F(|x| \leq A), G(|D| \leq B)] \lesssim \frac{1}{AB}
    \end{equation}
    Moreover, if $\tilde{G}$ is a smooth cut-off function supported in $|x| \geq 1$, then
    \begin{equation}\label{eqn:phys-Fourier-comm-2}
        [F(|x| \leq A), \tilde{G}(|D| \geq B)] \lesssim \frac{1}{AB}
    \end{equation}
\end{lemma}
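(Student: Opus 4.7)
The plan is to prove the first estimate by representing $G(|D|\leq B)$ as a convolution with a Schwartz kernel, so that the commutator with $F(|x|\leq A)$ picks up only a small translation in the argument of $F$, and then to deduce the second estimate by taking a complement of $\tilde G(|D|\geq B)$ that falls under the first case.

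Writing $m(\xi) := G(|\xi|/B)$, the symbol $m$ is smooth and compactly supported, so $\check m(\eta) = B\check{m_0}(B\eta)$ is Schwartz, where $m_0(\xi) := G(|\xi|)$. The Fourier multiplier is then a convolution,
\begin{equation*}
    G(|D|\leq B) f(x) = \frac{1}{\sqrt{2\pi}} \int \check m(\eta)\, f(x+\eta)\, d\eta,
\end{equation*}
and so
\begin{equation*}
    [F(|x|\leq A), G(|D|\leq B)] f(x) = \frac{1}{\sqrt{2\pi}} \int \check m(\eta) \bigl[F(|x|/A) - F(|x+\eta|/A)\bigr] f(x+\eta)\, d\eta,
\end{equation*}
where I interpret $F(|x|\leq A)$ as $F(|x|/A)$. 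By the mean value theorem the bracketed difference is bounded pointwise by $\lVert F'\rVert_\infty |\eta|/A$. Taking $L^2_x$ norms under the integral and using the rescaling $u = B\eta$ to convert $\int |\eta|\,|\check m(\eta)|\,d\eta$ into $B^{-1}\int |u|\,|\check{m_0}(u)|\,du < \infty$ gives the desired bound $\lVert [F(|x|\leq A), G(|D|\leq B)]\rVert_{L^2\to L^2} \lesssim 1/(AB)$.

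For the second estimate, the hypothesis that $\tilde G$ is smooth and supported in $|y|\geq 1$ (with the usual convention that $\tilde G$ equals $1$ outside a bounded set, so that $\tilde G(|D|\geq B)$ acts as a smooth frequency cut-off to $|\xi|\gtrsim B$) implies that $H(y) := 1 - \tilde G(|y|)$ is smooth and compactly supported near zero, i.e.\ of the first type. Since the identity commutes with $F(|x|\leq A)$, the relation $\tilde G(|D|\geq B) = I - H(|D|/B)$ yields
\begin{equation*}
    [F(|x|\leq A), \tilde G(|D|\geq B)] = -[F(|x|\leq A), H(|D|\leq B)],
\end{equation*}
which is controlled by the first estimate. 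The only moderately delicate point throughout is keeping track of the $B$-rescaling in the Schwartz norms of $\check m$, which cleanly produces the $1/B$ factor; beyond this bookkeeping there is no real obstacle.
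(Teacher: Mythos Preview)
Your proof is correct and follows essentially the same approach as the paper: represent the Fourier multiplier as a convolution with a rescaled Schwartz kernel, use the mean value theorem to bound the difference $F(|x|/A)-F(|x+\eta|/A)$ by $|\eta|/A$, and extract the $1/B$ factor from the scaling of the kernel, then deduce the second estimate by passing to the complement $1-\tilde G$. The only cosmetic difference is that you bound the operator norm via Minkowski's inequality whereas the paper invokes Schur's test, but these are equivalent here.
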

\begin{proof}
    We begin by proving~\eqref{eqn:phys-Fourier-comm-1}.  Observe that $G(|D| \leq 1)$ can be represented as a convolution:
    \begin{equation*}
        G(|D| \leq 1) \phi(x) = \int {k}(x-y) \phi(y)\;dy
    \end{equation*}
    where {$k = \mathcal{F}^{-1} \editadd{G}(|\xi| \leq 1)$} is smooth and rapidly decaying.  By scaling,
    \begin{equation*}
        G(|D| \leq B) \phi(x) = \int B {k}(B(x-y)) \phi(y)\;dy
    \end{equation*}
    It follows that
    \begin{equation*}\begin{split}
        [F(|x| \leq A), G(|D| \leq B)] \phi =& B\int (F(|x| \leq A) - F(|y| \leq A)) {k}(B(x-y)) \phi(y)\;dy
    \end{split}\end{equation*}
    Thus, by Schur's test,
    \begin{equation*}
        \lVert [F(|x| \leq A), G(|D| \leq B)] \rVert_{L^2 \to L^2} \leq \sup_{x \in \bbR} \lVert B(F(|x| \leq A) - F(|y| \leq A)) {k}(B(x-y)) \rVert_{L^1_y}
    \end{equation*}
    By the Mean Value Theorem,
    $$|F(|x| \leq A) - F(|y| \leq A)| \lesssim \frac{|x-y|}{A} $$
    uniformly in $x,y$, so
    \begin{equation*}\begin{split}
        \lVert [F(|x| \leq A), G(|D| \leq B)] \rVert_{L^2 \to L^2} \lesssim& \frac{1}{A} \sup_x \lVert B|x-y| {k}(B(x-y){)} \rVert_{L^1_y}\\
        \lesssim& \frac{1}{AB}
    \end{split}\end{equation*}
    as required.  Note that $1 - \tilde{G}(|x| \geq B) = G(|x| \leq B)$, so~\eqref{eqn:phys-Fourier-comm-2} follows immediately.
\end{proof}

Using the convolution representation of $F(|D| \leq A)$, we see that for $AB \gg 1$, $F(x > B)F(|D| \leq A)$ will essentially be supported on $F(x > \frac{1}{100}B)$, since $F(|D| \leq A)$ represents a convolution with characteristic length scale $A^{-1}$.  We make this heuristic precise in the following lemma:
\begin{lemma}\label{lem:xd-comm}
    For any $N > 0$,
    \begin{subequations}\begin{align}
        \lVert F(D > B) F(|x| \leq A) F(D \leq \frac{1}{100}B) \rVert_{L^2\to L^2} \lesssim_N& \frac{1}{(AB)^N}\label{eqn:approx-comm-1}\\
        \lVert F(x > B) F(|D| \leq A) F(x \leq \frac{1}{100}B) \rVert_{L^2\to L^2} \lesssim_N& \frac{1}{(AB)^N}\label{eqn:approx-comm-2}
    \end{align}\end{subequations}
\end{lemma}
\begin{proof}
    By the Plancherel theorem, it is enough to prove~\eqref{eqn:approx-comm-2}.  {Arguing as in the proof of~\Cref{lem:phys-Fourier-comm}, we see that this} operator has a kernel representation given by
    \begin{equation*}
        K(x,y) = A F(x > B) k(A(x-y)) F(y \leq \frac{1}{100}B)
    \end{equation*}
    Observe that the localization of $x$ and $y$ implies that $|x-y| \geq cB$ for some constant $c$.  Since $F$ is smooth, $|k(z)| \editadd{\lesssim} |z|^{-N-1}$, so
    \begin{equation*}\begin{split}
        \lVert F(x > B) F(|D| \leq A) F(x \leq \frac{1}{100}B) \rVert_{L^2\to L^2} \leq& (\sup_x \lVert K(x,y) \rVert_{L^1_y})^{1/2}(\sup_y \lVert K(x,y) \rVert_{L^1_z})^{1/2}\\
        \lesssim& \lVert Ak(Az) \rVert_{L^1(|z| \geq cB{)}}\\
        \lesssim& \lVert k(z) \rVert_{L^1(|z| \geq cAB{)}}\\
        \lesssim_N& (AB)^{-N}
    \end{split}\end{equation*}
    as required.
\end{proof}

We also record the following abstract commutator lemmas, which will often be useful.
\begin{lemma}\label{lem:anti-comm}
    {For operators $A,B,C$ with $[A,C] = 0$},
    $$ABC - CBA = A[B,C] {-} C[B,A]$$
\end{lemma}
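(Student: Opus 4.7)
The plan is to prove this purely algebraic identity by direct expansion of both sides. I would start from the right-hand side, substitute $[B,C] = BC - CB$ and $[B,A] = BA - AB$, and distribute the outer $A$ and $C$, obtaining
$$A[B,C] + C[B,A] = (ABC - ACB) + (CBA - CAB).$$
Comparing with the left-hand side $ABC - CBA$ then reduces the claim to matching these four monomials and verifying that the cross terms $ACB$ and $CAB$ combine correctly with $CBA$ and $ABC$.

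The only real subtlety is bookkeeping of operator order, since $A$, $B$, $C$ do not commute pairwise. If the cross terms do not immediately collapse into the stated shape, the standard backup plan is the telescoping identity
$$ABC - CBA = (ABC - ACB) + (ACB - CAB) + (CAB - CBA),$$
in which each summand is manifestly of the form (operator)$\times$(commutator) or (commutator)$\times$(operator); these three pieces can then be repackaged into commutators in several equivalent ways, and one picks the grouping that matches the stated right-hand side. I do not expect any difficulty beyond careful tracking of signs and operator ordering, as no analytic machinery is required beyond the definition $[X,Y] = XY - YX$; the entire argument should occupy at most a few lines.
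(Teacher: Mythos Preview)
Your expansion of the right-hand side is correct:
\[
A[B,C] + C[B,A] = ABC - ACB + CBA - CAB.
\]
However, the step you describe as routine---``verifying that the cross terms $ACB$ and $CAB$ combine correctly''---will not go through: for the displayed expression to equal $ABC - CBA$ one would need $2CBA = ACB + CAB$, which is false in general. A one-line counterexample is $A = C$: then $ABC - CBA = 0$, while $A[B,C] + C[B,A] = 2A[B,A]$, which is nonzero whenever $A$ and $B$ fail to commute.

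The lemma as printed therefore has a sign error. Your own backup telescoping
\[
ABC - CBA = (ABC - ACB) + (ACB - CAB) + (CAB - CBA) = A[B,C] + [A,C]B + C[A,B]
\]
gives the correct identity
\[
ABC - CBA = A[B,C] + [A,C]B - C[B,A],
\]
which specializes to $ABC - CBA = A[B,C] - C[B,A]$ when $[A,C]=0$. This corrected form is what the paper actually needs: in the application to $F''\gamma F - F\gamma F''$, the outer operators $A = F''$ and $C = F$ are multiplication operators and hence commute, and the only point is that $ABC - CBA$ can be written entirely in terms of commutators of $B = \gamma$ with multiplication operators (each of which gains a factor of $t^{-\beta}$). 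So your method is exactly right; carrying the computation one line further would have exposed the typo rather than confirmed the stated formula.
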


\begin{lemma}\label{lem:two-term-sym}
 $$A^2B + BA^2 = 2ABA + [A,[A,B]]$$
\end{lemma}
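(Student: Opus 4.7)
The plan is to verify the identity by directly expanding the nested commutator on the right-hand side using only the definition $[X,Y] = XY - YX$. First I would compute the inner commutator $[A,B] = AB - BA$, then substitute into the outer commutator and expand:
\begin{equation*}
    [A,[A,B]] = A[A,B] - [A,B]A = A(AB-BA) - (AB-BA)A.
\end{equation*}
Distributing the products gives
\begin{equation*}
    [A,[A,B]] = A^2B - ABA - ABA + BA^2 = A^2 B + BA^2 - 2ABA.
\end{equation*}
Rearranging yields $A^2 B + BA^2 = 2ABA + [A,[A,B]]$, which is the claimed identity.

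Since the statement is a purely algebraic identity in an associative algebra (no analytic considerations, no domain issues to track because the lemma is to be applied formally to bounded operators arising from cutoff functions later), there is no genuine obstacle. The only minor subtlety is that when this lemma is invoked for unbounded $A$ (e.g., $A = -\Delta$) one must check that all operator products are well defined on a suitable common dense domain, but this is a cosmetic issue rather than a mathematical one and can be handled case-by-case at the point of application.
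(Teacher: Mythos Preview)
Your proof is correct and is precisely the ``straightforward calculation'' the paper alludes to when it omits the proofs of these commutator lemmas. There is nothing to add.
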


\begin{lemma}\label{lem:three-term-sym}
    Suppose $[A,C] = 0$.  Then,
    \begin{equation}
        A^2BC^2 + C^2 B A^2 = 2(AC)B(AC) + R(A,B,C)
    \end{equation}
    where
    $$R(A,B,C) = A[[A,B],C]C + C[[C,B],A]{A} + A[C,[C,B]]A + C[A,[A,B]]C$$
    involves double commutators of $A$ and $C$ with $B$.
\end{lemma}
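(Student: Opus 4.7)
The plan is to reduce the three-term symmetrization to the two-term version of Lemma \ref{lem:two-term-sym}, applied to the product operator $AC$. Since $[A, C] = 0$, one has $(AC)^2 = A^2 C^2$, so replacing ``$A$'' in Lemma \ref{lem:two-term-sym} by $AC$ immediately gives
\begin{equation*}
A^2 C^2 B + B A^2 C^2 = 2 (AC) B (AC) + [AC, [AC, B]].
\end{equation*}

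The next step is to convert the left-hand side into $A^2 B C^2 + C^2 B A^2$ by swapping $B$ past $C^2$ in the first summand and past $A^2$ in the second. Using $[A^2, C^2] = 0$, a direct calculation gives
\begin{equation*}
A^2 B C^2 - A^2 C^2 B = A^2 [B, C^2], \qquad C^2 B A^2 - B A^2 C^2 = -[B, C^2] A^2,
\end{equation*}
so that the difference of the two sums is the double commutator $[A^2, [B, C^2]]$. Combining this with the previous display yields
\begin{equation*}
A^2 B C^2 + C^2 B A^2 = 2 (AC) B (AC) + [AC, [AC, B]] + [A^2, [B, C^2]].
\end{equation*}
At this level the remainder is already a sum of double commutators involving only $A$, $C$ and $B$, which is the substantive content of the statement.

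To match the specific expression for $R(A, B, C)$, I would expand both $[AC, [AC, B]]$ and $[A^2, [B, C^2]]$ using the Leibniz rule $[XY, Z] = X[Y,Z] + [X,Z]Y$, repeatedly collapsing cross terms by $[A, C] = 0$ (for instance $[AC, A] = 0$, and $[A, [B, C] \cdot A] = [A, [B, C]] A$). Several intermediate terms cancel using $[A, [C, B]] = -[A, [B, C]]$ together with a single application of the Jacobi identity (which simplifies because $[A, C] = 0$), after which the surviving terms assemble into the four terms of $R(A, B, C)$.

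The hard part will be the bookkeeping in this last step: the expansion produces eight raw terms, and identifying the correct cancellations and regroupings to land on the stated $R(A, B, C)$ is purely mechanical but error-prone. Conceptually, however, everything is determined once one writes $(AC)^2 = A^2 C^2$ and invokes Lemma \ref{lem:two-term-sym}; the remainder is routine algebra.
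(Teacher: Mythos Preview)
Your approach is correct and in fact more structured than what the paper does: the paper simply says ``all of these lemmas can be proved by straightforward (and tedious) calculation'' and omits the proof entirely, whereas you reduce cleanly to Lemma~\ref{lem:two-term-sym} applied to $AC$ and then correct by the single double commutator $[A^2,[B,C^2]]$. Your identity
\[
A^2BC^2 + C^2BA^2 \;=\; 2(AC)B(AC) + [AC,[AC,B]] + [A^2,[B,C^2]]
\]
is correct and already exhibits the remainder as a sum of double commutators of $A$ and $C$ with $B$, which is all that is ever used downstream.

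One caveat about the final ``bookkeeping'' step: the displayed $R(A,B,C)$ in the paper appears to contain a typo. By counting degrees, every term should be quadratic in $A$, quadratic in $C$, and linear in $B$, but the second term $C[[C,B],A]$ is only linear in $A$; presumably it should read $C[[C,B],A]A$. So you will not be able to make the expansion land on the four terms exactly as printed, and there is no need to: your expression $[AC,[AC,B]] + [A^2,[B,C^2]]$ is the correct remainder, and (after expanding via Leibniz and using $[A,[C,B]] = [C,[A,B]]$ from Jacobi and $[A,C]=0$) it can be rewritten in several equivalent four-term forms of the stated shape.
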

{\begin{proof}
    To begin, we observe that
    \begin{equation*}
        A^2 B C^2 = ABC^2A + A[A,B]C^2 = (AC)B(AC) + A[A,B]C^2 + A[B,C] CA
    \end{equation*}
    and reversing the roles of $A$ and $C$ gives
    \begin{equation*}
        \editadd{C^2 B A^2} = \editadd{CBA^2C + C[C,B]A^2} = (AC)B(AC) - C[B,C]A^2 - C[A,B] CA
    \end{equation*}
    Thus, 
    \begin{equation*}\begin{split}
        A^2BC^2 + C^2 BA^2 = 2(AC)B(AC) 
        + \left\{A[A,B]C - C[A,B]A \right\}C 
        + \left\{A[B,C] C - C[B,C]A\right\}A
    \end{split}\end{equation*}
    Considering the first term in curly brackets, we see that
    \begin{equation*}
        A[A,B]C - C[A,B]A = A[[A,B],C] + CA[A,B] - C[[A,B],A] - CA[A,B] = A[[A,B],C] - C[[A,B],A]
    \end{equation*}
    which gives the first and last term of $R(A,B,C)$.  A similar calculation involving the other bracketed term gives the remaining double commutator terms.  
\end{proof}}

\subsection{Other preliminaries}

In our later work, we will make use of the following lemma, which can be seen as a variant of the classical Ladyzhenskaya inequality:
\begin{lemma}\label{lem:GNS-variant}
    If ${G}: [0,\infty) \to [0,\infty)$ is a bounded increasing function, then
    \begin{equation*}
        \lVert |{G}(|x|) \phi|^2 \rVert_{L^\infty} \leq  2\lVert {G}(|x|) \partial_x |\phi| \rVert_{L^2} \lVert {G(|x|) |\phi|} \rVert_{L^2}
    \end{equation*}
\end{lemma}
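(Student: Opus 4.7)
The plan is to adapt the standard one-dimensional Sobolev embedding $\lVert v\rVert_{L^\infty}^2 \leq 2\lVert v\rVert_{L^2}\lVert v'\rVert_{L^2}$ to include the weight $F(|x|)$, with the monotonicity of $F$ playing the decisive role. By the symmetry $x \mapsto -x$ induced by the composition with $|\cdot|$, it suffices to establish the pointwise bound at each $x \geq 0$.

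Since $F$ is bounded and $\phi \in L^2(\bbR)$, the function $F(|y|)^2 |\phi(y)|^2$ is integrable in $y$, so there is a sequence $y_n \to \infty$ along which $F(y_n)^2 |\phi(y_n)|^2 \to 0$. Applying the fundamental theorem of calculus along this sequence and expanding the derivative yields
\begin{equation*}
F(x)^2 |\phi(x)|^2 = -\int_x^\infty \bigl[2F(y)F'(y)|\phi(y)|^2 + F(y)^2 \partial_y |\phi|^2\bigr]\,dy.
\end{equation*}
For $y \geq 0$, the fact that $F$ is nonnegative and nondecreasing ensures $F(y)F'(y) \geq 0$, so the first integrand is nonnegative and its total contribution on the right-hand side is nonpositive. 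Discarding this unfavorable term preserves the inequality:
\begin{equation*}
F(x)^2 |\phi(x)|^2 \leq -\int_x^\infty F(y)^2 \partial_y |\phi|^2\,dy.
\end{equation*}

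To close, I would estimate the remaining integral via Cauchy--Schwarz, using the pointwise identity $\partial_y |\phi|^2 = 2\operatorname{Re}(\bar\phi\phi')$ to rewrite $F^2 \partial_y|\phi|^2$ and distributing the weight $F$ between the two factors of the pairing so that one factor delivers $\lVert F\,\partial_y|\phi|^2\rVert_{L^2}$ and the other reduces to $\lVert\phi\rVert_{L^2}$. Taking the supremum over $x \geq 0$ and combining with the mirror argument for $x \leq 0$ then yields the claimed $L^\infty$ bound.

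The main conceptual obstacle is executing the Cauchy--Schwarz step so that the weight $F$ ends up paired with $\partial_y|\phi|^2$ on one side and none at all on the other; handling this requires care with the identity $|\partial_y|\phi|^2| = 2|\phi|\,\bigl|(|\phi|)'\bigr|$, which allows one to move a factor of $|\phi|$ out of the gradient-like term. The essential structural ingredient throughout is the monotonicity hypothesis on $F$: if $F$ were allowed to decrease, the $FF'|\phi|^2$ contribution above would have an indefinite sign and could not be dropped, and no estimate of this form would be available.
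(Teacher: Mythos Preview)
Your approach is essentially the paper's: fundamental theorem of calculus on $[x,\infty)$, monotonicity of $F$, then Cauchy--Schwarz. The only cosmetic difference is how monotonicity enters. You differentiate $F(y)^2|\phi(y)|^2$ and discard the nonnegative term $2FF'|\phi|^2$; the paper keeps $F(x)^2$ outside the integral as a constant and then upgrades it to $F(y)^2$ via $F(x)\le F(y)$. These two moves are algebraically equivalent (integration by parts converts one into the other), so up through your displayed inequality
\[
F(x)^2|\phi(x)|^2 \le \int_x^\infty F(y)^2\,\bigl|\partial_y|\phi|^2\bigr|\,dy
\]
the arguments coincide.

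Where you should be careful is the final Cauchy--Schwarz splitting. You propose to land on $\lVert F\,\partial_y|\phi|^2\rVert_{L^2}\,\lVert\phi\rVert_{L^2}$, but no splitting of $F^2|\partial_y|\phi|^2|$ produces this: writing $F^2|\partial_y|\phi|^2| = |F\partial_y|\phi|^2|\cdot F$ pairs the second factor with $F\notin L^2$, and the alternative factorisation through $|\partial_y|\phi|^2|=2|\phi|\,|(|\phi|)'|$ yields $\lVert\phi\rVert_{L^2}\lVert F^2(|\phi|)'\rVert_{L^2}$, which is not the same quantity. In fact the inequality with right-hand side $2\lVert F\partial_x|\phi|^2\rVert_{L^2}\lVert\phi\rVert_{L^2}$ is false as stated: taking $F\equiv c$ constant makes the left side scale like $c^2$ and the right side like $c$. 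What the paper's proof actually delivers (and what is used later) is
\[
\lVert F^2|\phi|^2\rVert_{L^\infty}\le 2\,\lVert F\partial_x\phi\rVert_{L^2}\,\lVert F\phi\rVert_{L^2},
\]
obtained from the natural splitting $F^2|\partial_y|\phi|^2|\le 2(F|\phi|)(F|\phi'|)$. The lemma statement carries typos; your argument is fine once you target this corrected right-hand side.
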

\begin{proof}
    Let us assume that $x \geq 0$, since the case of negative $x$ is similar.  We have that
    \begin{equation*}\begin{split}
        {G}^2(x) |\phi(x)|^2 =& {G}^2(x) \int_x^\infty \partial_y |\phi(y)|^2\;dy\\
        \leq& \int_x^\infty {G}^2(x) |\partial_y |\phi(y)|^2|\;dy\\
        \leq& 2\int_x^\infty {G}^2(y) |\partial_y \phi(y)| |\phi(y)|\;dy\\
        \leq& 2 \lVert {G(|x|)} \partial_x \phi \rVert_{L^2} \lVert {G(|x|)} \phi \rVert_{L^2}\qedhere
    \end{split}\end{equation*}
\end{proof}

\section{Propagation estimates}\label{sec:PRES}

In order to control the spreading of the solution and take full advantage of the defocusing character of the nonlinearity, we will use several propagation estimates.  Recall that we define the expectation of $B$ with respect to $u$ at time $t$ to be
\begin{equation*}
    \langle B \rangle_{t,u} = \langle Bu, u \rangle_{L^2}
\end{equation*}
and will write $\langle B \rangle_t$ when the function $u$ is clear from context.  For a time-dependent operator $B(t)$, the evolution of the expectation \editadd{of a solution $u$ to~\eqref{eqn:NLS}} is given by
\begin{equation*}
    \partial_t \langle B(t) \rangle_t = \langle [-i\Delta + iV + i|u|^{p-1}, B(t)] + \partial_t B(t) \rangle_t = \langle D_H B(t) \rangle_t
\end{equation*}
In our work, the Morawetz operator
\begin{equation*}
    \gamma = \frac{\sgn(x) \partial_x + \partial_x \sgn(x)}{2i}
\end{equation*}
will play an important role.  {If we formally commute the derivative through the nonsmooth function $\sgn(x)$, we have that}
\begin{equation*}
    {\gamma = -i\sgn(x) \partial_x - i \delta(x)}
\end{equation*}
{If $G(x)$ is a function supported away from $x=0$, then the singularity at $x = 0$ does not contribute, and we have the identities }
\begin{equation}\label{eqn:gamma-identities}
    \begin{split}
    {\gamma G =}& {-i\sgn(x) \partial_x G}\\
    {G \gamma  =}& {-i G \sgn(x) \partial_x}\\
    {[\gamma, G] =}& {-i\sgn(x) \editadd{\partial_x G}(x)}
    \end{split}
\end{equation}
{In particular,}
\begin{equation}\label{eqn:gamma-G-abs-comm}
    {[-i\Delta, G(|x|)] = -i \partial_x \sgn(x) G'(|x|) -i G'(|x|)\sgn(x) \partial_x = \gamma G'(|x|) + G'(|x|) \gamma}
\end{equation}

For Schr\"odinger equations with purely repulsive interactions, $\langle D_H \gamma \rangle_t$ has good positivity properties (at least in dimension greater than $3$), which allows us to control the solution in certain global space-time norms.  Since we do not assume that the potential in~\eqref{eqn:NLS} is repulsive, we cannot expect to derive estimates using $\langle D_H \gamma \rangle_t$.  However, we are able to get estimates in the exterior of a time-dependent region, where the potential can be treated as an integrable perturbation.  

\subsection{Basic estimates}
Here, we collect some basic estimates, which we will refer to later in~\Cref{sec:loc-nonrad}.  We begin with two estimates on $\gamma u$ in the region $|x| \sim t^\beta$:
\begin{lemma}\label{lem:gamma-annulus-int}
    For $\beta \in (1/3, 1)$, we have that
    \begin{equation}\label{eqn:gamma-annulus-int-0}
        \left|\int_0^T \langle F'(|x| \geq t^\beta) \gamma F'(|x| \geq t^\beta) \rangle_t \;dt\right| \lesssim T^\beta \editadd{\lVert u \rVert_{L^2}^2}
    \end{equation}
    {and}
    \begin{equation}\label{eqn:gamma-annulus-int-infty}
        \sup_{T \in (t_0, \infty]}{\left| \int_{t_0}^T \frac{1}{t}\langle F'(|x| \geq t^\beta) \gamma F'(|x| \geq t^\beta) \rangle_t \;dt \right| \lesssim t_0^{\beta-1}} \editadd{\lVert u \rVert_{L^2}^2}
    \end{equation}
\end{lemma}
\begin{proof}
    {We begin by proving~\eqref{eqn:gamma-annulus-int-0}.}  Let $G$ be the bounded function given by $G(0) = 0$ and ${G'(x) = (F'(x))^2}$.  Then, by~\eqref{eqn:Heis-deriv},
    \begin{equation}\label{eqn:t-alpha-g-pres}\begin{split}
        \langle T^\beta G({T^{-\beta}}|x| {)} \rangle_T =& \int_0^T \partial_t \langle t^\beta G({t^{-\beta}}|x| {)} \rangle_t\; dt\\
        {=}& {\int_0^T \langle D_H \left( t^\beta G(t^{-\beta} |x|)\right)\rangle_t\;dt}\\
        =& \int_0^T \langle \gamma G' + G' \gamma \rangle_t + \beta \langle t^{\beta - 1} G \rangle_t {-\beta} \langle \frac{|x|}{t} G' \rangle_t \;dt
    \end{split}\end{equation}
    {where on the last line we have used~\eqref{eqn:gamma-G-abs-comm}.  For notational simplicity, we will suppress the argument $(t^{-\beta} |x|)$ of the functions $G$ and $G'$.}  By the definition of $G'$ and~\Cref{lem:two-term-sym},
    \begin{equation}\label{eqn:G'-to-F'-eqn}\begin{split}
        \langle G' \gamma + \gamma G' \rangle_t =& \langle (F')^2 \gamma + \gamma (F')^2 \rangle_t\\
        =& 2 \langle F' \gamma F' \rangle_t + \langle [F', [F', \gamma]]\rangle_t\\
        =& 2 \langle F' \gamma F' \rangle_t
    \end{split}\end{equation}
    \editadd{Here, the $[F', [F', \gamma]]$ term drops because by~\eqref{eqn:gamma-identities},}
    $${[F',\gamma] = -[\gamma, F']= i t^{-\beta} F''}$$
    \editadd{is a function, so $[F', [F', \gamma]] = 0$.}  Inserting this into~\eqref{eqn:t-alpha-g-pres} and rearranging, we see that
    \begin{equation*}
        \int_0^T 2 \langle F' \gamma F' \rangle_t\;dt = \langle T^\beta G( T^{-\beta}|x|) \rangle_T + \int_0^T O(t^{\beta - 1} \editadd{\lVert u \rVert_{L^2}^2}) \;dt
    \end{equation*}
    which gives~{\eqref{eqn:gamma-annulus-int-0}}.

    {Turning to~\eqref{eqn:gamma-annulus-int-infty}, we apply~\eqref{eqn:Heis-deriv} to the propagation observable $t^{\beta-1} G(t^{-\beta} |x|)$ to find that}
    \begin{equation*}\begin{split}
        {\langle T^{\beta-1} G(T^{-\beta} |x|) \rangle_T - \langle t_0^{\beta-1} G(t_0^{-\beta} |x|) \rangle_{t_0} \lesssim}& {\int_{t_0}^T \frac{1}{t}\langle \gamma G'(t^{-\beta}|x|) + G'(t^{-\beta}|x|)\gamma\rangle_t\;dt}\\ &{-\int_{t_0}^t(\beta-1) t^{\beta-2} \langle G(t^{-\beta}|x|) \rangle_t + \beta t^{\beta-2} \left\langle \frac{|x|}{t^\beta} G'(t^{-\beta}|x|)\right\rangle_t\;dt}
    \end{split}\end{equation*}
    {Using~\eqref{eqn:G'-to-F'-eqn} and rearranging, we find that}
    \begin{equation*}\begin{split}
        {\int_{t_0}^T \frac{2}{t}\langle F'(|x| \geq t^\beta) \gamma F'(|x| \geq t^\beta) \rangle_t \;dt =}& {\langle T^{\beta-1} G(T^{-\beta} |x|) \rangle_{T}-\langle t_0^{\beta-1} G(t_0^{-\beta} |x|) \rangle_{t_0}}\\
        &{+ \int_{t_0}^t(\beta-1) t^{\beta-2} \langle G(t^{-\beta}|x|) \rangle_t + \beta t^{\beta-2} \left\langle \frac{|x|}{t^\beta} G'(t^{-\beta}|x|)\right\rangle_t\;dt}\\
        {=}& {O(t_0^{\beta-1})\editadd{\lVert u \rVert_{L^2}^2}}
    \end{split}\end{equation*}
    {which is~\eqref{eqn:gamma-annulus-int-infty}.}
\end{proof}

\begin{rmk}\label{rmk:F-to-tilde-F}
    {The only property of $F$ used in the above proof is the fact that $F'(|x| \geq t^{\beta})$ is supported in $|x| \sim t^\beta$.  It follows that the results continue to apply if we replace $F'$ by $\tilde{F}'$, which will prove useful in several of the following arguments.}
\end{rmk}

\subsection{An exterior Morawetz estimate}
We now give an exterior Morawetz estimate, which we will use in~\Cref{sec:loc-nonrad} to obtain slow spreading for nonradiative states that are initially localized.  A similar result without the nonlinear term was proved in~\cite{liuLargeTimeAsymptotics2025}.

\begin{thm}\label{thm:ext-mora-est}
    \editadd{For any $\beta \in (1/3, 1)$,} the following estimate holds for solutions to~\eqref{eqn:NLS}:
    \begin{equation}\label{eqn:ext-mora-diff-est}\begin{split}
        \langle F \gamma F \rangle_T - \langle F \gamma F \rangle_t =& \int_t^T 2s^{-\beta} \lVert \gamma \sqrt{F'F} u(s) \rVert_{L^2}^2 + {2}s^{-\beta}\frac{p-1}{p+1} \int F'F |u(s)|^{p+1}\;dx\;ds\\
        &+ \int_{t}^{\editadd{T}} O(s^{-3\beta}) \langle {G(x/s^\beta)} \rangle_s\; ds + O(t^{1 - \beta(\sigma+1)} {+ t^{\beta - 1}}) \lVert u_0 \rVert_{L^2}^2
    \end{split}\end{equation}
    where ${G(x)}$ is a bounded function supported in the region $|x| \sim 1$.  In particular, if $\sigma {\geq} 2$, then 
    \begin{equation}\label{eqn:ext-mora-usual-est}
        \int_1^\infty \left(2s^{-\beta} \lVert \gamma \sqrt{F'F} u \rVert_{L^2}^2 + {2}s^{-\beta}\frac{p-1}{p+1} \int F'F |u|^{p+1}\;dx\right)\;ds \editadd{\lesssim \lVert u \rVert_{L^2} \sup_t\lVert u(t) \rVert_{H^1}}
    \end{equation}
\end{thm}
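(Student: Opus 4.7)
The approach is to differentiate $\langle F\gamma F\rangle_t$ via~\eqref{eqn:obs-deriv}--\eqref{eqn:Heis-deriv}, isolate the main positive contributions coming from the kinetic commutator and the defocusing nonlinearity, and check that every other piece is either bounded by the ``quantum correction'' $O(s^{-3\beta})\langle G\rangle_s$ or by the potential error $O(t^{1-\beta(\sigma+1)})$. Viewing $F = F(|x|\ge s^\beta)$ as a rescaled cutoff $F(|x|/s^\beta)$, Leibniz gives
\begin{equation*}
D_H(F\gamma F) = (D_HF)\gamma F + F(D_H\gamma)F + F\gamma(D_HF).
\end{equation*}

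For the middle term, the kinetic part of $D_H\gamma$ is the double commutator $i[-\Delta,\gamma]$, which in one dimension is a distribution supported at $x=0$; the outer $F$'s kill it, which is precisely where the exterior cutoff removes the ``bad'' 1D Morawetz contribution. The remaining pieces are handled via the identity $i[W,\gamma]=-W'(x)\sgn(x)$ for a multiplication operator $W$ away from the origin. The potential contribution is then pointwise bounded by $\langle x\rangle^{-\sigma-1}|u|^2$ on $|x|\ge s^\beta$ via~\eqref{eqn:dV-decay-hypo}, giving $O(s^{-\beta(\sigma+1)})$ per unit time and $O(t^{1-\beta(\sigma+1)})$ once integrated. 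The nonlinear contribution, after the chain-rule identity $\partial_x(|u|^{p-1})|u|^2=\tfrac{p-1}{p+1}\partial_x|u|^{p+1}$ and integration by parts, reduces to $\tfrac{p-1}{p+1}\int|u|^{p+1}\partial_x(\sgn(x)F^2)\,dx$; on the support of $F$ the derivative of $\sgn(x)F^2$ produces exactly the stated positive term $s^{-\beta}\tfrac{p-1}{p+1}\int F'F\,|u|^{p+1}\,dx$.

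For the outer terms write $D_HF=\partial_tF+i[-\Delta,F]$. On the annulus $|x|\sim s^\beta$ we have $\partial_xF=\sgn(x)F'/s^\beta$ and $\sgn(x)\partial_x=i\gamma$, so
\begin{equation*}
i[-\Delta,F] = \tfrac{2}{s^\beta}F'\gamma + O(s^{-2\beta}),
\end{equation*}
where the remainder collects the $F''$ contribution and the correction needed to symmetrize $F'\gamma$. The self-adjoint combination $(D_HF)\gamma F+F\gamma(D_HF)$ has leading part proportional to $s^{-\beta}(F'\gamma^2 F+F\gamma^2F')$; applying \Cref{lem:three-term-sym} with $A=\sqrt{F'}$, $C=\sqrt{F}$, $B=\gamma^2$ gives
\begin{equation*}
F'\gamma^2 F+F\gamma^2 F' = 2\sqrt{F'F}\,\gamma^2\sqrt{F'F}+R,
\end{equation*}
whose expectation produces the main positive term $\tfrac{2}{s^\beta}\lVert\gamma\sqrt{F'F}u\rVert_{L^2}^2$. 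The remainder $R$ collects double commutators of $\sqrt{F'}$ and $\sqrt{F}$ with $\gamma^2$: each commutator with $\gamma$ involves one derivative of size $s^{-\beta}$, so together with the $s^{-\beta}$ prefactor the total is $O(s^{-3\beta})$ on the annulus. The lower-order pieces from $F''$ and $[F',\gamma]$ fit into the same error, and the time derivative $\partial_tF\sim s^{-1}F'$ produces cross terms $s^{-1}\langle F'\gamma F+F\gamma F'\rangle$ which, after a Cauchy--Schwarz split and $\varepsilon$-absorption into the main positive term, contribute to the same annular $O(s^{-3\beta})$ class.

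Collecting these computations and integrating in $s$ from $t$ to $T$ yields~\eqref{eqn:ext-mora-diff-est}. For~\eqref{eqn:ext-mora-usual-est}, Cauchy--Schwarz and~\eqref{eqn:bdd-energy-hypo} give $\sup_T|\langle F\gamma F\rangle_T|\lesssim\lVert u\rVert_{L^\infty_tH^1_x}^2<\infty$; under $\sigma\ge2$ and $\beta\in(1/3,1)$ we have $3\beta>1$ and $\beta(\sigma+1)>1$, so both error terms in~\eqref{eqn:ext-mora-diff-est} are finite uniformly in $T$, forcing the nonnegative integrand to have finite total mass over $[1,\infty)$. The main obstacle I anticipate is the bookkeeping in the outer step: showing that every ``quantum correction'' from commuting with $F$ and from $\partial_tF$ actually sits inside the $O(s^{-3\beta})$ annular class rather than producing a non-integrable $O(s^{-1})$ tail. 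The rescaling $F=F(|x|/s^\beta)$ is essential here, because it ensures that each derivative of $F$ carries an $s^{-\beta}$ gain, allowing the double commutators generated by \Cref{lem:two-term-sym} and \Cref{lem:three-term-sym} to acquire the three extra factors of $s^{-\beta}$ required to match the stated error.
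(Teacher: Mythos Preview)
Your argument is sound and reaches~\eqref{eqn:ext-mora-usual-est}, but you handle the $\partial_t F$ contribution differently from the paper. The paper first proves an auxiliary lemma (\Cref{thm:ext-mora-est-lemma}) by running the same Heisenberg computation on the weighted observable $t^{-\epsilon}F\gamma F$; the extra $t^{-\epsilon}$ factor renders the time-derivative terms $O(t^{-1-\epsilon})$ and hence trivially integrable, yielding $t^{-\beta-\epsilon}\lVert\gamma\sqrt{\tilde F'\tilde F}u\rVert_{L^2}^2\in L^1_t$ for an auxiliary cutoff $\tilde F$ chosen so that $\sqrt{\tilde F'\tilde F}\geq yF'(y)$. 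This preliminary estimate is then fed back via Cauchy--Schwarz \emph{in the time variable} to bound $\int s^{-1}\lVert\gamma\tfrac{|x|}{s^\beta}F'u\rVert\,ds$ directly. Your route---Young's inequality at each fixed $s$, absorbing a small multiple of the main positive term---is more direct and bypasses the bootstrap lemma entirely; what you give up is an independent bound on $\int|\text{$\partial_tF$-term}|\,ds$ (you only get it a posteriori once $\int P<\infty$ is established).

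One correction: your claim that the leftover after absorption lands in the annular $O(s^{-3\beta})$ class is not quite right. The split
\[
s^{-1}\lVert\gamma\sqrt{F'F}\,u\rVert\,\lVert u\rVert \le \varepsilon s^{-\beta}\lVert\gamma\sqrt{F'F}\,u\rVert^2 + C_\varepsilon s^{\beta-2}\lVert u\rVert^2
\]
produces an $O(s^{\beta-2})$ tail, which is integrable for every $\beta<1$ but coincides with $O(s^{-3\beta})$ only when $\beta\le 1/2$. This does not affect~\eqref{eqn:ext-mora-usual-est}; indeed the paper's own proof carries an analogous $O(t_0^{\beta+\epsilon-1})$ error not displayed in the statement of~\eqref{eqn:ext-mora-diff-est}.
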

%To prove this theorem, we will need the following lemma:
%\begin{lemma}\label{thm:ext-mora-est-lemma}
%    If $\sigma {\geq} 2$, $\beta > \frac{1}{3}$, then
%    \begin{equation*}
%        t^{-\beta - \epsilon} \lVert \gamma \sqrt{{\tilde{F}}'{\tilde{F}}} u \rVert_{L^2}^2 \in L^1(1,\infty)
%    \end{equation*}
%\end{lemma}
\begin{proof}
    We begin by considering the propagation observable 
    ${F(|x| \geq t^\beta) \gamma F(|x| \geq t^\beta)}$.  Taking the Heisenberg derivative, we find that
    \begin{equation}\label{eqn:B-deriv}\begin{split}
        \partial_t \langle F(|x| \geq t^\beta) \gamma F(|x| \geq t^\beta) \rangle_t =& \langle [-i\Delta, F\gamma F] \rangle_t + \langle [iV, F\gamma F] \rangle_t + \langle [i|u|^{p-1}, F\gamma F] \rangle_t\\
        &\editadd{-\beta} \left\langle \frac{|x|}{t^{\beta + 1}} F' \gamma F + F \gamma F'\frac{|x|}{t^{\beta + 1}} \right\rangle_t
    \end{split}\end{equation}
    {For the first term, we note that by~\eqref{eqn:gamma-G-abs-comm} and the commutator identity from~\eqref{eqn:gamma-identities},}
    \begin{equation}\label{eqn:Lap-comm-expansion}
        {[-i\Delta, F] = t^{-\beta} \gamma {{F}}' + t^{-\beta} {{F}}' \gamma = 2 t^{-\beta}{{F}}' \gamma - it^{-2\beta}{{F}}'' = 2 t^{-\beta} \gamma {{F}}' {+} it^{-2\beta}{{F}}''}
    \end{equation}
    {while $F[-i\Delta, \gamma]F = 0$, so we can write}
    \begin{equation*}
        {[-i\Delta, F\gamma F] = 2t^{-\beta} (F' \gamma^2 F + F \gamma^2 F') - it^{-2\beta} (F'' \gamma {\tilde{F}} - F\gamma F'')}
    \end{equation*}
    {Applying the commutator identities of~\Cref{lem:anti-comm,lem:three-term-sym} and noting that each commutator between $\gamma$ and $\sqrt{F}$, $\sqrt{F'}$ or their derivatives gains us a factor of $t^{-\beta}$, we find that}
    \begin{equation*}
        {[-i\Delta, F\gamma F] = 2 t^{-\beta} \sqrt{F'F}\gamma^2 \sqrt{F'F} + t^{-3\beta} G(x / t^\beta)}
    \end{equation*}
    {where $G(x)$ is some bounded function supported in the region $|x| \sim 1$ (so $G(x/t^\beta)$ is supported in $|x| \sim t^\beta$).  Turning to the potential term, we see that}
    \begin{equation*}
        {[iV, F\gamma F] = -F V' \sgn(x) F}
    \end{equation*}
    {so by the decay hypothesis~\eqref{eqn:dV-decay-hypo} we have that}
    \begin{equation*}
        {\langle [iV, F\gamma F] \rangle_t \lesssim t^{-\beta(\sigma +1)} \lVert u \rVert_{L^2}^2}
    \end{equation*}
    {For the nonlinear term, a similar calculation gives that}
    \begin{equation*}
        {[i|u|^{p-1}, F\gamma F] = -F (\partial_x |u|^{p-1}) \sgn(x) F}
    \end{equation*}
    {and we can integrate by parts to find that}
    \begin{equation*}\begin{split}
        {\langle [i|u|^{p-1}, F\gamma F] \rangle_t =}& {-\int |u|^2(\partial_x |u|^{p-1}) F^2 \sgn(x) \;dx}\\
        {=}& {-\frac{p-1}{p+1}\int (\partial_x |u|^{p+1}) F^2 \sgn(x) \;dx}\\
        {=}& {2t^{-\beta} \frac{p-1}{p+1} \int F'F |u|^{p+1}\;dx}
    \end{split}\end{equation*}
    {where on the last line we have used the fact that}
    $${\partial_x {{F}} = \partial_x {{F}}(|x| \geq t^{\beta}) = t^{-\beta}\sgn(x) {{F}}'}$$
    Thus, it only remains to consider the last term in~\eqref{eqn:B-deriv}.  Applying~\Cref{lem:three-term-sym} with $A = \sqrt{|x| t^{-\beta} F'}$, $B = \gamma$, and $C = \sqrt{F}$, we see that
    \begin{equation*}
        \frac{|x|}{t^{\beta + 1}} F' \gamma F + F \gamma F'\frac{|x|}{t^{\beta + 1}}  = \frac{2}{t} \sqrt{|x|t^{-\beta} F'F} \gamma \sqrt{|x|t^{-\beta} F'F}
    \end{equation*}
    {Notice that since $\gamma$ is a first order differential operator, there are no double commutator terms.  To control this term, we will prove a propagation estimate analogous to~\Cref{lem:gamma-annulus-int}.  Define $H(x)$ with $H(0) = 0$ and $H'(y) = yF'(y \geq 1)F(y \geq 1)$.  Noting that $\lim_{T \to \infty} \langle T^{\beta -1} H(T^{-\beta} |x|)\rangle_{T} = 0$, we see that}
    \begin{equation*}\begin{split}
        {\langle t^{\beta - 1} H(t^{-\beta} |x|) \rangle_t =}& {-\int_t^\infty \frac{1}{s} \langle \gamma H'(s^{\editadd{-}\beta}|x|) + H'(s^{\editadd{-}\beta}|x|) \gamma\rangle_s \;ds}\\
        &{+ \int_t^\infty (\beta - 1) s^{\beta - 2} \langle H(s^{-\beta}|x|)\rangle_s \editadd{-} \beta s^{\beta - 2} \left\langle \frac{|x|}{s^\beta}H'(s^{-\beta}|x|)\right\rangle_s\;ds}
    \end{split}\end{equation*}
    By the reasoning used in~\eqref{eqn:G'-to-F'-eqn}, we have that $H'\gamma + \gamma H' = 2\sqrt{|x|t^{-\beta} F'F} \gamma \sqrt{|x|t^{-\beta} F'F}$, \editadd{and the integrand on the second line can be seen to be $O(s^{\beta - 2}\lVert u \rVert_{L^2}^2)$.}  Thus, rearranging gives that
    \begin{equation*}\begin{split}
        \int_t^\infty \frac{2}{s} \langle\sqrt{|x|\editadd{s}^{-\beta} F'F} \gamma \sqrt{|x|\editadd{s}^{-\beta} F'F}\rangle_s \;ds \editadd{=}&
        \editadd{-\langle t^{\beta - 1} H(t^{-\beta} |x|) \rangle_t \rangle_t + \int_t^\infty O(s^{\beta - 2} \lVert u \rVert_{L^2}^2)\;ds}\\
        =& O(t^{\beta-1} \editadd{\lVert u \rVert_{L^2}^2})
    \end{split}\end{equation*}
    
    Collecting the previous estimates, we see that
    \begin{equation*}\begin{split}
        \langle F \gamma F \rangle_T - \langle F \gamma F \rangle_t =& \int_t^T 2s^{-\beta} \lVert \gamma \sqrt{F'F} u \rVert_{L^2}^2 + {2}s^{-\beta}\frac{p-1}{p+1} \int F'F |u|^{p+1}\;dx\;ds\\
        +& \int_{t}^\infty O(s^{-3\beta}) \langle {G(x/t^\beta)} \rangle_s\; ds + O(t^{1 - \beta(\sigma+1)} + t^{\beta - 1})\lVert u \rVert_{L^2}^2
    \end{split}\end{equation*}
    which proves~\eqref{eqn:ext-mora-diff-est}.  The bound~\eqref{eqn:ext-mora-usual-est} then follows immediately by taking $t = 1$ and $T \to \infty$.
\end{proof}

For future use, we observe that the above argument still goes through if we replace $F$ by $\tilde{F}$ everywhere.  In particular,
{\begin{equation}\label{eqn:tilde-ext-mora-usual-est}
        \int_1^\infty \left(2s^{-\beta} \lVert \gamma \sqrt{\tilde{F}'\tilde{F}} u \rVert_{L^2}^2 + {2}s^{-\beta}\frac{p-1}{p+1} \int \tilde{F}'\tilde{F} |u|^{p+1}\;dx\right)\;ds \editadd{\lesssim \lVert u \rVert_{L^2} \sup_t\lVert u(t) \rVert_{H^1}}
\end{equation}}

\subsection{An exterior interaction Morawetz estimate}
Since~\eqref{eqn:NLS} admits nondispersive solutions for general potentials $V$, we cannot expect the usual interaction Morawetz estimates to hold.  However, we will show that $u$ obeys an interaction Morawetz estimate when restricted to the exterior of a time-dependent region:
\begin{thm}\label{thm:ext-int-Mora}
    The solution $u$ to~\eqref{eqn:NLS} satisfies
    \begin{equation}\label{eqn:inter-Mor}
         \int_1^\infty \lVert \editadd{\left(1-F^2(|x| \leq t^{\beta})\right)} \partial_x |u|^2 \rVert_{L^2_{x}}^2\;dt + \int_1^\infty \lVert \editadd{\left(1-F^2(|x| \leq t^{\beta})\right)}^{\frac{2}{p+3}} u \rVert_{L^{p+3}_x}^{p+3}\;dt \lesssim \sup_{t \in [1,\infty)} \lVert u(t) \rVert_{L^2}^3 \lVert u(t) \rVert_{H^1}
    \end{equation}
\end{thm}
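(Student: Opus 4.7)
The approach is to introduce an exterior analog of the Planchon--Vega interaction Morawetz identity. The proposed ``two-particle'' observable is
\[
I(t) = \iint F^2(|x| \geq t^\beta)\, F^2(|y| \geq t^\beta)\, |u(x,t)|^2\, |u(y,t)|^2\, |x-y|\,dx\,dy,
\]
which can equivalently be viewed as the exterior Morawetz action of~\Cref{thm:ext-mora-est} applied in the relative direction to the tensor product $v(x,y,t) := u(x,t)u(y,t)$, itself a solution of a product Schr\"odinger equation on $\mathbb{R}^2$ with potential $V(x,t)+V(y,t)$ and nonlinearity $(|u(x)|^{p-1}+|u(y)|^{p-1})v$. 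The target estimate~\eqref{eqn:inter-Mor} will follow by integrating $\ddot I(t)$ twice in time.

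\textbf{Step 1 (boundedness of $\dot I$).} Writing $\rho = |u|^2$, $j = \operatorname{Im}(\bar u\,\partial_x u)$, and $F_x := F(|x|\geq t^\beta)$, differentiate $I(t)$ once using the continuity equation $\partial_t \rho = -\partial_x j$ and integrate by parts in $x$ (and symmetrically in $y$) to obtain
\[
\dot I(t) = 4 \iint F_x^2 F_y^2\, \sgn(x-y)\, j(x,t)\, \rho(y,t)\,dx\,dy + R_1(t),
\]
where $R_1(t)$ collects annular contributions from $\partial_t F_x^2$ and from derivatives hitting $F_x^2$, all of which are controlled in the manner of~\Cref{lem:gamma-annulus-int}. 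The Fubini estimate $\sup_x \left|\int \rho(y)\sgn(x-y)\,dy\right| \leq \lVert u\rVert_{L^2}^2$ combined with $\lVert j\rVert_{L^1} \leq \lVert u\rVert_{L^2}\lVert u\rVert_{H^1}$ then yields $|\dot I(t)| \lesssim \lVert u(t)\rVert_{L^2}^3 \lVert u(t)\rVert_{H^1}$ uniformly in $t \geq 1$.

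\textbf{Step 2 (positivity of $\ddot I$).} Differentiate once more using the momentum identity for $j$, schematically $\partial_t j = -2\partial_x\mathcal{T} - \rho\,\partial_x V - \tfrac{p-1}{p+1}\partial_x|u|^{p+1}$, and symmetrize commutators in the spirit of~\Cref{lem:three-term-sym}. The outcome is
\[
\ddot I(t) \geq c_1 \int F^2(|x|\geq t^\beta) |\partial_x |u|^2|^2\,dx + c_2 \int F^2(|x|\geq t^\beta) |u|^{p+3}\,dx - E(t),
\]
with $c_1,c_2 > 0$. The quadratic $|\partial_x|u|^2|^2$ term arises from the distributional identity $\partial_x^2\sgn(x-y) = 2\delta(x-y)$ that is the heart of the Planchon--Vega mechanism, while the $|u|^{p+3}$ term comes from the nonlinear commutator exactly as in the proof of~\Cref{thm:ext-mora-est}. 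The remainder $E(t)$ splits into: annular ``quantum corrections'' of size $t^{-3\beta}$ arising from double commutators of $-(\partial_x^2+\partial_y^2)$ with $F_x^2 F_y^2$, handled as in~\Cref{thm:ext-mora-est}; potential contributions of size $t^{-\beta(\sigma+1)}$ controlled by~\eqref{eqn:dV-decay-hypo}; and $O(t^{-1})$ boundary pieces supported on annuli coming from $\partial_t F_x^2$, treated as in~\Cref{lem:gamma-annulus-int}. For $\beta > 1/3$ and $\sigma \geq 2$, each of these errors is integrable in $t$.

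\textbf{Step 3 and main obstacle.} Integrating $\ddot I$ from $t=1$ to $T$, bounding $|\dot I(T)-\dot I(1)|$ by Step 1, absorbing the finite error integrals, and sending $T\to\infty$ yields~\eqref{eqn:inter-Mor}. The main technical difficulty lies in Step 2: because the leading good term is produced by the distributional identity $\partial_x^2\sgn(x-y) = 2\delta(x-y)$ acting on a non-smooth weight, one must insert the tensor-product cutoff $F_x^2 F_y^2$ in such a way that the delta-function concentration on the diagonal survives and still produces a clean $\int F^2 |\partial_x |u|^2|^2$. This requires careful symmetrization in each of the $x$- and $y$-variables separately (as in~\Cref{lem:three-term-sym}) so that the cross-commutator errors among $F_x^2$, $F_y^2$, and the 2d Laplacian localize to the annular regions $|x|\sim t^\beta$ or $|y|\sim t^\beta$ with the claimed $t^{-3\beta}$ weight, and do not degrade to a rate that cannot be absorbed under $\beta > 1/3$.
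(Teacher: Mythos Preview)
Your overall strategy---tensor product plus the Planchon--Vega delta-function mechanism under an exterior cutoff---is the right one, and matches the paper's. However, two concrete steps do not close as written.

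First, the product cutoff $F_x^2 F_y^2$ is not what the paper uses, and the difference is not cosmetic. The paper works in the rotated variables $w=(x+y)/\sqrt2$, $z=(x-y)/\sqrt2$ with the cutoff $F^\otimes = F(|w|\geq\sqrt2\,t^\beta)\,F(|z|\leq t^\beta/(10\sqrt2))$, i.e.\ center of mass far from the origin \emph{and} relative separation small. With your weight $|x-y|$ and product cutoff, the annular pieces in $R_1$ (say from $\partial_x F_x^2$) carry an unbounded factor $|x-y|$ on $\{|x|\sim t^\beta,\ |y|\geq t^\beta\}$, so the claimed bound $|\dot I|\lesssim \lVert u\rVert_{L^2}^3\lVert u\rVert_{H^1}$ fails without a weighted-$L^2$ hypothesis you do not have. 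More seriously, in Step~2 the single-commutator cross terms---the analog of the paper's contribution $\iint \sgn(z)\sgn(w)\,G^2\,\partial_z\bar U\,\partial_w U$---are only $O(t^{-\beta})$, not $O(t^{-3\beta})$. The paper controls them (and the companion $|u|^4$ annular term) by invoking the already-established exterior Morawetz estimate of \Cref{thm:ext-mora-est}, which in turn requires the boundary support to sit where both $|x|,|y|\sim t^\beta$; the rotated cutoff guarantees this, the product cutoff does not.

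Second, the assertion that the $O(t^{-1})$ boundary pieces from $\partial_t F$ are ``treated as in \Cref{lem:gamma-annulus-int}'' and hence integrable is incorrect: $t^{-1}\notin L^1([1,\infty))$, and that lemma only yields a bound growing like $T^\beta$. The paper closes this gap via a bootstrap. It first proves the estimate with an extra $t^{-\rho}$ weight ($\rho>0$ small), which renders the $\partial_t F^\otimes$ contribution trivially $O(t^{-1-\rho})$; it then returns to $\rho=0$ and bounds the two residual boundary integrals by combining Cauchy--Schwarz with \Cref{thm:ext-mora-est} for one and the just-proved $\rho>0$ bound for the other. Without this two-pass structure and the systematic use of \Cref{thm:ext-mora-est} to absorb the $t^{-\beta}$-size errors, Step~3 does not close.
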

We will prove~\Cref{thm:ext-int-Mora} by adapting the tensor argument of~\cite{planchonBilinearVirialIdentities2009}.  Define $U(x,y,t) = u(x,t)u(y,t)$.  Then, $U$ solves the two-dimensional Schr\"odinger equation
\begin{equation}\label{eqn:tensor-NLS}
    i \partial_t U + \Delta_{x,y} U = (V(x,t) + V(y,t))U + (|u(x,t)|^{p-1} + |u(y,t)|^{p-1})U
\end{equation}
where $\Delta_{x,y} = \Delta_x + \Delta_y$.  We will now derive a propagation estimate for $U$ which implies the estimate~\eqref{eqn:inter-Mor} for $u$.  
We define the interaction Morawetz vector fields to be
\editadd{\begin{equation*}
    \gamma^\otimes_+ := -i\bbOne_{x > y} \partial_x -i\bbOne_{x < y} \partial_y,\qquad\qquad \gamma^\otimes_- = i\bbOne_{x > y} \partial_y +i \bbOne_{x < y} \partial_x
\end{equation*}
    We remark that, due to boundary terms, the above operators are not self-adjoint.  Although we could add the boundary terms in `by hand,' this would needlessly complicate later calculations.  Instead, we observe that for any operator $A$,
    \begin{equation*}
        \Re \langle A U, U \rangle = \langle \frac{1}{2}(A + A^*) U, U \rangle
    \end{equation*}
    so we may achieve the same result by taking the real part in all propagation estimates involving $\gamma^\otimes_\pm$.  We remark that (up to boundary terms), the proof of the interaction Morawetz estimate in~\cite{planchonBilinearVirialIdentities2009} follows from taking propagation estimates using the operator $\gamma^\otimes_+ + \gamma^\otimes_-$.  The following proposition shows that in fact, either of the vector fields $\gamma^\otimes_\pm$ serves to give the same estimate:
\begin{prop}\label{prop:gamma-otimes-ident}
    Let $\mathfrak{U} = \mathfrak{u}(x) \mathfrak{v}(y)$ some $H^1$ functions $\fu$ and $\fv$.  Then, we have that
    \begin{equation}\label{eqn:gamma-otimes-ident}
        \Re \langle i[-\Delta_{x,y}, \gamma^\otimes_\pm] \mathfrak{U}, \mathfrak{U} \rangle \geq \int_{-\infty}^\infty \partial_x |\mathfrak{u}(x,t)|^2 \partial_x |\mathfrak{v}(x,t)|^2\;dx
    \end{equation}
\end{prop}
\begin{proof}
    At its heart, this is nothing more than a positive commutator estimate.  However, the form of $\gamma^\otimes$ leads to a number of boundary terms when performing integrations by parts, so we will proceed carefully.  We will focus on the argument for $\gamma^\otimes_+$; the argument for the other vector field is similar.  Using the definition of $\gamma^\otimes$, we have that
    \begin{subequations}\begin{align}
        \langle i[-\Delta_{x,y}, \gamma^\otimes_\pm] \mathfrak{U}, \mathfrak{U} \rangle =& \langle i \gamma^\otimes \mathfrak{U},  (-\Delta_{x,y} \mathfrak{U}) \rangle - \langle  i \gamma^\otimes_+(-\Delta_{x,y}) \mathfrak{U},  \mathfrak{U} \rangle\notag\\
            =& -\int_{x > y}  \partial_x\overline{\mathfrak{U}} \partial_x^2 \mathfrak{U} - \mathfrak{U}\partial_x^3\overline{\mathfrak{U}}  \;dxdy\label{eqn:EIM-free-1}\\
            & -\int_{x > y}  \partial_x \overline{\mathfrak{U}} \partial_y^2\mathfrak{U} - \partial_x\partial_y^2 \overline{\mathfrak{U}}  \mathfrak{U}\;dxdy\label{eqn:EIM-free-2}\\
            & -\int_{x < y}  \partial_y\overline{\mathfrak{U}}  \partial_x^2\mathfrak{U} - \partial_x^2 \partial_y \overline{\mathfrak{U}}  \mathfrak{U}\;dxdy\label{eqn:EIM-free-3}\\
            & -\int_{x < y}  \partial_y \overline{\mathfrak{U}} \partial_y^2\mathfrak{U} - \mathfrak{U}\partial_y^3 \overline{\mathfrak{U}}  \;dxdy\label{eqn:EIM-free-4}
    \end{align}
    \end{subequations}
    For~\eqref{eqn:EIM-free-1}, we compute that
    \begin{equation*}\begin{split}
        \eqref{eqn:EIM-free-1} =& -\iint_{x > y} \partial_x |\partial_x \mathfrak{U}|^2\;dxdy  - \int_{-\infty}^\infty \mathfrak{U}\partial_x^2\overline{\mathfrak{U}}(y, y) \;dy
                                = \int_{-\infty}^\infty |\partial_x \mathfrak{U}|^2(y, y) - \mathfrak{U}\partial_x^2\overline{\mathfrak{U}}  \;dy
    \end{split}\end{equation*}
    By symmetry, we also have that
    \begin{equation*}
        \eqref{eqn:EIM-free-4} = \int_{-\infty}^\infty |\partial_y \mathfrak{U}|^2(x, x) - \mathfrak{U}\partial_y^2 \overline{\mathfrak{U}} (x, x) \;dx
    \end{equation*}
    Similarly, for~\eqref{eqn:EIM-free-2}, integrating by parts in $y$ gives
    \begin{equation*}\begin{split}
        \eqref{eqn:EIM-free-2} =& \int_{-\infty}^\infty \mathfrak{U} \partial_x\partial_y \overline{\mathfrak{U}}(x,x) - \partial_y \mathfrak{U}\partial_x \overline{\mathfrak{U}} (x,x)\;dx
    \end{split}\end{equation*}
    and, by symmetry,
    \begin{equation*}\begin{split}
        \eqref{eqn:EIM-free-3} 
        =& \int_{-\infty}^\infty \mathfrak{U} \partial_x\partial_y \overline{\mathfrak{U}}(y,y) - \partial_x \mathfrak{U} \partial_y \overline{\mathfrak{U}}(y,y)\;dy
    \end{split}\end{equation*}
    In particular, summing these equalities and using the fact that $\mathfrak{U} = \fu\fv$ near the diagonal gives
    \begin{equation*}\begin{split}
        \langle\mathfrak{U}, i[-\Delta_{x,y}, \gamma^\otimes_+] \mathfrak{U} \rangle =& \eqref{eqn:EIM-free-1} +\eqref{eqn:EIM-free-2} +\eqref{eqn:EIM-free-3} +\eqref{eqn:EIM-free-4}\\
        =& \int_{-\infty}^\infty |\fu'(r)|^2 |\fv(r)|^2 + |\fu(r)|^2 |\fv'(r)|^2 - {\fu}(r) \overline{\fu''}(r)|\fv(r)|^2 - |\fu(r)|^2 {\fv}(r) \overline{\fv''}(r)\;dr\\
        &+ 2\int_{-\infty}^\infty {\fu}(r) {\fv}(r) \overline{\fu'}(r) \overline{\fv'}(r) -  \Re ({\fu'}(r) \overline{\fu}(r) {\fv}(r) \overline{\fv'}(r))\;dr\\
        =& \int_{-\infty}^\infty 2|\fu'(r)|^2 |\fv(r)|^2 + 2|\fu(r)|^2 |\fv'(r)|^2 + {\fu}(r) \overline{\fu'(r)} \partial_r|\fv(r)|^2 + \partial_r |\fu(r)|^2 {\fv}(r) \overline{\fv'}(r)\;dr\\
        &+ 2\int_{-\infty}^\infty {\fu}(r) {\fv}(r) \overline{\fu'}(r) \overline{\fv'}(r) -  \Re ({\fu'}(r) \overline{\fu}(r) {\fv}(r) \overline{\fv'}(r))\;dr
    \end{split}\end{equation*}
    Taking the real part now gives
    \begin{equation*}\begin{split}
        \Re\langle\mathfrak{U}, i[-\Delta_{x,y}, \gamma^\otimes_+] \mathfrak{U} \rangle =& \int_{-\infty}^\infty \partial_r |\fv(r)|^2 \partial_r|\fv(r)|^2 \;dr + 2\int_{-\infty}^\infty |\fu'(r)|^2 |\fv(r)|^2 + |\fu(r)|^2 |\fv'(r)|^2 \;dr\\
        %&\\
        &+ 2\int_{-\infty}^\infty \Re(\overline{\fu}(r) \overline{\fv}(r) \fu'(r) \fv'(r)) -  \Re (\overline{\fu'}(r) \fu(r) \overline{\fv}(r) \fv'(r))\;dr\\
        \geq& \int_{-\infty}^\infty \partial_r |\fv(r)|^2 \partial_r|\fv(r)|^2 \;dr
    \end{split}\end{equation*}
    where on the last line we have used the elementary inequality
    \begin{equation*}
        2|\Re(ab)| \leq |a|^2 + |b|^2
    \end{equation*}
    to control the last integral.
\end{proof}
}

\editadd{We are now in a position to prove~\Cref{thm:ext-int-Mora}:
\begin{proof}[Proof of~\Cref{thm:ext-int-Mora}]
    Define the cutoff functions
    \begin{equation*}
        F^\otimes_+(x,y) = 1- F(x \leq t^\beta) F(y \leq t^\beta),\qquad F^\otimes_-(x,y) = 1- F(-x \leq t^\beta) F(-y \leq t^\beta)
    \end{equation*}
    \Cref{thm:ext-int-Mora} follows from a propagation estimate against the observable $F^\otimes_+ \gamma^\otimes_+ F^\otimes_+ + F^\otimes_- \gamma^\otimes_- F^\otimes_-$.  We will show how to obtain the estimate for $F^\otimes_+ \gamma^\otimes_+ F^\otimes_+$ (which gives~\eqref{eqn:inter-Mor} for $x > 0$); the estimate for $F^\otimes_- \gamma^\otimes_- F^\otimes_-$ is similar.  

    Using~\eqref{eqn:tensor-NLS}, we compute that
    \begin{subequations}\begin{align}
        \partial_t \langle F^\otimes_+ \gamma^\otimes_+ F^\otimes_+ \rangle_{t,U} =& \langle i[-\Delta_{x,y}, F^\otimes_+ \gamma^\otimes_+ F^\otimes_+] \rangle_{t,U}\label{eqn:EIM-linear}\\
        &+ \langle i[V(x,t) + V(y,t), F^\otimes_+ \gamma^\otimes_+ F^\otimes_+] \rangle_{t,U}\label{eqn:EIM-pot}\\
        &+ \langle i[|u(x,t)|^{p-1} + |u(y,t)|^{p-1}, F^\otimes_+ \gamma^\otimes_+ F^\otimes_+] \rangle_{t,U}\label{eqn:EIM-nonlin}\\
        &+ \langle \partial_t F^\otimes_+ \gamma^\otimes_+ F^\otimes_+ + F^\otimes_+ \gamma^\otimes_+ \partial_t F^\otimes_+ \rangle_{t,U}\label{eqn:EIM-dt}
    \end{align}
    \end{subequations}
    Here, the terms~\eqref{eqn:EIM-dt} and~\eqref{eqn:EIM-pot} will be purely perturbative, while the terms~\eqref{eqn:EIM-linear} and~\eqref{eqn:EIM-nonlin} contain perturbative and positive terms.  For~\eqref{eqn:EIM-pot}, the support condition on $F^\otimes_+$ and~\eqref{eqn:dV-decay-hypo} immediately imply that
    \begin{equation*}
        |\eqref{eqn:EIM-pot}| \lesssim t^{-(\sigma+1)\beta}\lVert u \rVert_{L^2}^4
    \end{equation*}
    Turning to~\eqref{eqn:EIM-dt}, we compute that
    \begin{equation*}
        \partial_t F^\otimes_+ = \beta \frac{x}{t^{\beta+1}} F'(x \leq t^\beta) F(y \leq t^\beta) + \beta \frac{y}{t^{\beta+1}} F(x \leq t^\beta) F'(y \leq t^\beta)
    \end{equation*}
    Observe that for $x > y$, $\gamma^\otimes_+ = \gamma_x$ (the $\gamma$ vector field in the $x$ variable) and $\partial_t F^\otimes_+$ is supported in the region $x \sim t^\beta$; while for $x < y$, $\gamma^\otimes_+ = \gamma_y$ and  $\partial_t F^\otimes_+$ is supported in the region $y \sim t^\beta$.  Thus, we have that
    \begin{equation*}
        |\eqref{eqn:EIM-dt}| \lesssim t^{-1}\lVert u \rVert_{L^2}^3 \left\lVert \gamma \sqrt{\tilde{F}'\tilde{F}(|x| \geq t^\beta)} u \right\rVert_{L^2}
    \end{equation*}
    so, using~\Cref{thm:ext-mora-est} and recalling that $\beta < 1$, we have that for any $T \in (1,\infty)$,
    \begin{equation*}
        \int_1^T |\eqref{eqn:EIM-dt}| \;dt \lesssim \int_1^T t^{-1+\beta/2} t^{-\beta/2} \left\lVert \gamma \sqrt{\tilde{F}'\tilde{F}(|x| \geq t^\beta)} u \right\rVert_{L^2} \lVert u \rVert_{L^2}^3\;dt \lesssim \sup_t \lVert u \rVert_{L^2}^3 \lVert u \rVert_{H^1}
    \end{equation*}
    For the nonlinear term~\eqref{eqn:EIM-nonlin}, we write
    \begin{equation*}\begin{split}
        \eqref{eqn:EIM-nonlin}  =& -\left\langle F^\otimes_+ \left(\bbOne_{x > y} \partial_x |u(x,t)|^{p-1} + \bbOne_{x < y} \partial_y |u(y,t)|^{p-1}\right) F^\otimes_+  \right\rangle_{t,U}\\
                                =& - \iint_{x > y} \left(F^\otimes_+\right)^2 |u(x,t)|^2|u(y,t)|^2 \partial_x |u(x,t)|^{p-1} \;dxdy\\
                                 & - \iint_{x < y} \left(F^\otimes_+\right)^2 |u(x,t)|^2|u(y,t)|^2 \partial_y |u(y,t)|^{p-1} \;dxdy\\
                                =& - \frac{p-1}{p+1}\iint_{x > y} \left(F^\otimes_+\right)^2 |u(y,t)|^2 \partial_x |u(x,t)|^{p+1} \;dxdy\\
                                 & - \frac{p-1}{p+1}\iint_{x < y} \left(F^\otimes_+\right)^2 |u(x,t)|^2 \partial_y |u(y,t)|^{p+1} \;dxdy\\
                                =& \frac{p-1}{p+1} \iint_{x > y} \partial_x \left(F^\otimes_+\right)^2 |u(x,t)|^{p+1} |u(y,t)|^2\;dxdy\\
                                 &+ \frac{p-1}{p+1} \iint_{x < y} \partial_y \left(F^\otimes_+\right)^2 |u(x,t)|^{2} |u(y,t)|^{p+1}\;dxdy\\
                                 &+ 2\frac{p-1}{p+1} \int_{-\infty}^\infty \left( F^\otimes_+(x,x)\right)^2 |u(x,t)|^{p+3}\;dx
    \end{split}\end{equation*}
    Now, by~\Cref{thm:ext-mora-est}, the first two integrals in the last inequality are integrable in time, so 
    \begin{equation*}
        \int_1^T \eqref{eqn:EIM-nonlin} \;dt = 2\frac{p-1}{p+1} \int_1^T \int_{-\infty}^\infty \left( F^\otimes_+(x,x)\right)^2 |u(x,t)|^{p+3}\;dx dt + O\left( \sup_t \lVert u \rVert_{L^2}^3 \lVert u \rVert_{H^1}\right)
    \end{equation*}
    Finally, for~\eqref{eqn:EIM-linear}, we write
    \begin{subequations}\begin{align}
        \eqref{eqn:EIM-linear} =& \langle [-\Delta_{x,y}, F^\otimes_+] i\gamma^\otimes_+ F^\otimes_+ + F^\otimes_+ i\gamma^\otimes_+ [-\Delta_{x,y}, F^\otimes_+] \rangle_{t,U}\label{eqn:EIM-lin-cutoff}\\
        &+ \langle F^\otimes_+ i[-\Delta_{x,y}, \gamma^\otimes_+] F^\otimes_+ \rangle_{t,U}\label{eqn:EIM-lin-main}
    \end{align}\end{subequations}
    For the first term, we observe that
    \begin{equation*}\begin{split}
        [-\Delta_{x,y}, F^\otimes_+] =& -t^{-2\beta}(\Delta_{x,y} F^\otimes_+) - 2 t^{-\beta} (\partial_x F^\otimes_+) \partial_x - 2 t^{-\beta} (\partial_y F^\otimes_+) \partial_y\\
        =& t^{-2\beta}(\Delta_{x,y} F^\otimes_+) - 2 t^{-\beta} \partial_x (\partial_x F^\otimes_+) - 2 t^{-\beta} \partial_y (\partial_y F^\otimes_+)
    \end{split}\end{equation*}
    which lets us write
    \begin{subequations}\begin{align}
        \eqref{eqn:EIM-lin-cutoff} =& -t^{-2\beta} \left\langle (\Delta_{x,y} F^\otimes_+) i\gamma^\otimes_+ F^\otimes_+ - F^\otimes_+ i\gamma^\otimes_+ (\Delta_{x,y} F^\otimes_+) \right\rangle_{t,U} \label{eqn:EIM-lin-cutoff1}\\
                                    & -2t^{-\beta} \left\langle (\partial_x F^\otimes_+) \partial_x i\gamma^\otimes_+ F^\otimes_+ +   F^\otimes_+ i\gamma^\otimes_+ \partial_x (\partial_x F^\otimes_+)\right\rangle_{t,U} \label{eqn:EIM-lin-cutoff2}\\
                                    &+ \{\text{similar}\}\notag
    \end{align}
    \end{subequations}
    where $\{\text{similar}\}$ denotes the terms with $y$ derivatives.  For~\eqref{eqn:EIM-lin-cutoff1}, applying the symmetrization formula given in~\Cref{lem:two-term-sym} shows that
    \begin{equation*}
        \eqref{eqn:EIM-lin-cutoff1} = O(t^{-3\beta} \lVert u \rVert_{L^2}^4)
    \end{equation*}
    which is acceptable.  Turning to~\eqref{eqn:EIM-lin-cutoff2}, we again observe that for $x > y$, the vector field $\gamma^\otimes_+  \gamma_x$ and $\partial_x F^\otimes_+$ is supported in the region $|x| \sim t^\beta$, while for $x < y$, the vector field is $\gamma^\otimes_+ = \gamma_y$ and $\partial_x F^\otimes_+$ is supported in the region $|x|, |y| \sim t^\beta$.  Thus,
    \begin{equation*}
        |\eqref{eqn:EIM-lin-cutoff2}| \leq \left\lVert \gamma \sqrt{\tilde{F}'\tilde{F}(|x| \geq t^\beta)} u \right\rVert_{L^2}^2 \lVert u \rVert_{L^2}^2
    \end{equation*}
    so~\Cref{thm:ext-mora-est} implies that
    \begin{equation*}
        \int_1^T \eqref{eqn:EIM-lin-cutoff2} \;dt \lesssim \sup_t \lVert u \rVert_{L^2}^3 \lVert u \rVert_{H^1}
    \end{equation*}
    In particular, this shows that~\eqref{eqn:EIM-lin-cutoff} is perturbative.  Turning to~\eqref{eqn:EIM-lin-main}, we observe that by the same reasoning used in~\Cref{prop:gamma-otimes-ident},
    \begin{equation*}\begin{split}
        \eqref{eqn:EIM-lin-main} =& \int_{-\infty}^\infty |\partial_x (F^\otimes_+ U)|^2 + |\partial_x (F^\otimes_+ U)|^2 - F^\otimes_+ U \partial_x^2( F^\otimes_+ \overline{U}) - F^\otimes_+ U \partial_y^2( F^\otimes_+ \overline{U}) \\
        &\qquad\qquad + 2(F^\otimes_+ U \partial_x\partial_y (F^\otimes_+ \overline{U})) - 2 \Re(\partial_x (F^\otimes_+ U) \partial_y (F^\otimes_+ \overline{U}))\;dx
    \end{split}\end{equation*}
    Notice that each term in the integral contains three derivative.  Integrating by parts in the terms containing $\partial_x^2$ and $\partial_y^2$ and distributing all derivatives, there are three possibilities: either no derivatives hit the $F^\otimes_+$ cutoff functions, exactly one derivative hits the cutoffs, or both derivatives hit the cutoff.  Thus, we find that
    \begin{subequations}\begin{align}
        \Re \eqref{eqn:EIM-lin-main} =& \int_{-\infty}^\infty \left|\partial_x \left(F^\otimes_+(x,x) |u(x)|^2\right)\right|^2\;dx \label{eqn:EIM-lin-main1}\\
                                      &+ 4\int_{-\infty}^\infty (F^\otimes_{+}(x,x))^2 |\partial_x u|^2 |u|^2\;dx \label{eqn:EIM-lin-main2}\\
                                      &+ 2\int_{-\infty}^\infty (F^\otimes_{+}(x,x))^2 \left(\Re(\overline{u}^2 (\partial_x u)^2 -  |u|^2 |\partial_x u|^2\right)\;dx \label{eqn:EIM-lin-main3}\\
                                      &+ \int_{-\infty}^\infty \partial_x (F^\otimes_+(x,x))^2 O(|u|^3|\partial_x u|)\;dx \label{eqn:EIM-lin-main4}\\
                                      &+ \int_{-\infty}^\infty \partial_x^2 (F^\otimes_+(x,x))^2 O(|u|^4)\;dx\label{eqn:EIM-lin-main5}
    \end{align}\end{subequations}
    As in the proof of~\Cref{prop:gamma-otimes-ident}, $\eqref{eqn:EIM-lin-main2} + \eqref{eqn:EIM-lin-main3} \geq 0$.  For~\eqref{eqn:EIM-lin-main4}, we observe that by~\Cref{lem:GNS-variant},
    \begin{equation*}
        \lVert \tilde{F}'\tilde{F}(|x| \geq t^\beta) |u|^2 \rVert_{L^\infty} \lesssim \lVert \gamma \sqrt{\tilde{F}'\tilde{F}(|x| \geq t^\beta)} u \rVert_{L^2} \lVert u \rVert_{L^2}
    \end{equation*}
    Thus, using the fact that $\partial_x F^\otimes_+$ is supported in the region $|x| \sim t^\beta$ and has size $O(t^{-\beta})$, we see that
    \begin{equation*}\begin{split}
        |\eqref{eqn:EIM-lin-main4}| \lesssim& t^{-\beta} \left\lVert  \gamma \sqrt{\tilde{F}'\tilde{F}(|x| \geq t^\beta} u \right\rVert_{L^2} \rVert_{L^2} \lVert |u|^2 \rVert_{L^\infty}\\
        \lesssim& t^{-\beta} \left\lVert  \gamma \sqrt{\tilde{F}'\tilde{F}(|x| \geq t^\beta} u \right\rVert_{L^2}^2  \lVert u \rVert_{L^2}^2
    \end{split}\end{equation*}
    Similarly, we have that
    \begin{equation*}\begin{split}
        |\eqref{eqn:EIM-lin-main5}| \lesssim& t^{-2\beta} \lVert \partial_x^2(F^\otimes_+(x,x))^2 \rVert_{L^1} \lVert \sqrt{\tilde{F}'\tilde{F}(|x| \geq t^\beta)} |u|^2 \rVert_{L^\infty}^2\\
        \lesssim& t^{-\beta} \lVert \gamma \sqrt{\tilde{F}'\tilde{F}(|x| \geq t^\beta)} u \rVert_{L^2}^2 \lVert u \rVert_{L^2}^2
    \end{split}
    \end{equation*}
    so
    \begin{equation*}
        \int_1^T \eqref{eqn:EIM-lin-main4} + \eqref{eqn:EIM-lin-main5} \;dt \lesssim \sup_t \lVert u \rVert_{L^2}^3 \lVert u \rVert_{H^1}
    \end{equation*}
    
    Since $|\langle F^\otimes_+ \gamma^\otimes_+ F^\otimes_+ \rangle_{t,U}| \lesssim \sup_t \lVert u \rVert_{L^2}^3 \lVert u \rVert_{H^1}$, we conclude that for any $T \in (1,\infty)$
    \begin{equation*}
        \int_1^T \int_{-\infty}^\infty \left|\partial_x \left(F^\otimes_+(x,x) |u(x,t)|^2\right)\right|^2 + 2\frac{p-1}{p+1}(F^\otimes_+(x,x))^2 |u(x,t)|^{p+3}\;dxdt \lesssim \sup_t \lVert u \rVert_{L^2}^3 \lVert u \rVert_{H^1}
    \end{equation*}
    Taking $T \to \infty$ then gives the result.
\end{proof}}

In particular, by combining~\Cref{thm:ext-int-Mora} with~\Cref{lem:GNS-variant}, we can show that $u$ must decay in time in an averaged sense:
\begin{cor}\label{thm:L-infty-decay}
    For $\beta \in (1/3,1)$,
    \begin{equation}\label{eqn:L-infty-decay-eq}
        \lVert {\tilde{F}}^2(|x| \geq t^\beta) |u|^{{4}} \rVert_{L^q(1,\infty; L^\infty_x)} \lesssim \lVert u \rVert_{L^\infty_t H^1}^{{4}}
    \end{equation}
    whenever $q \geq \frac{{2}(p+1)}{p+3}$.
\end{cor}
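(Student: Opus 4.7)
The proof combines Lemma~\ref{lem:GNS-variant} with the linear part of the exterior interaction Morawetz estimate of Theorem~\ref{thm:ext-int-Mora}. Since $F(|x|\geq 1)$ is strictly increasing in $|x|$ by hypothesis, Lemma~\ref{lem:GNS-variant} applies pointwise in time to $\phi = u(\cdot,t)$ with the cut-off $F(|x|\geq t^\beta)$, yielding
\begin{equation*}
    \lVert F^2(|x|\geq t^\beta)|u|^2\rVert_{L^\infty_x} \lesssim \lVert F(|x|\geq t^\beta)\partial_x|u|^2\rVert_{L^2_x}\lVert u\rVert_{L^2_x}.
\end{equation*}
This converts the target $L^\infty_x$ norm into a product involving exactly the weighted $L^2_x$ quantity controlled by the linear part of~\eqref{eqn:inter-Mor}. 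Using $\lVert u\rVert_{L^2_x} \leq \lVert u\rVert_{L^\infty_t H^1}$, the corollary reduces to controlling $\lVert F\partial_x|u|^2\rVert_{L^q_t L^2_x}$ by $\lVert u\rVert_{L^\infty_t H^1}^2$.

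To this end, I would interpolate two bounds on $F\partial_x|u|^2$. Theorem~\ref{thm:ext-int-Mora} supplies the global-in-time estimate $\lVert F\partial_x|u|^2\rVert_{L^2_t L^2_x} \lesssim \lVert u\rVert_{L^\infty_t H^1}^2$. Meanwhile, the Leibniz rule $\partial_x|u|^2 = 2\Re(\bar u \partial_x u)$ together with the 1D Sobolev embedding $H^1(\bbR)\hookrightarrow L^\infty$ yields the pointwise-in-time bound $\lVert F\partial_x|u|^2\rVert_{L^2_x} \lesssim \lVert u\rVert_{L^\infty_x}\lVert\partial_x u\rVert_{L^2_x} \lesssim \lVert u\rVert_{H^1}^2$, i.e.\ $\lVert F\partial_x|u|^2\rVert_{L^\infty_t L^2_x} \lesssim \lVert u\rVert_{L^\infty_t H^1}^2$. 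Standard interpolation between $L^2_t$ and $L^\infty_t$ then gives $\lVert F\partial_x|u|^2\rVert_{L^q_t L^2_x} \lesssim \lVert u\rVert_{L^\infty_t H^1}^2$ for every $q\in[2,\infty]$. Since $4(p+1)/(p+3) > 2$ for $p > 1$, the stated range of $q$ is covered.

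The only subtle point is bookkeeping for the exact power of $\lVert u\rVert_{L^\infty_t H^1}$ on the right-hand side. Tracked naively, the chain above yields the bound $\lVert u\rVert_{L^\infty_t H^1}^3$; under the standing hypothesis~\eqref{eqn:bdd-energy-hypo} this collapses to the stated $\lVert u\rVert_{L^\infty_t H^1}^2$ up to a multiplicative constant depending on the $H^1$ bound. I do not expect any serious technical obstacle: the central new input is already supplied by Theorem~\ref{thm:ext-int-Mora}, and the remaining steps are elementary Gagliardo--Nirenberg-type interpolation, with the monotonicity of $F$ ensuring the applicability of Lemma~\ref{lem:GNS-variant}.
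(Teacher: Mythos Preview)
Your first displayed inequality
\[
\lVert F^2(|x|\geq t^\beta)|u|^2\rVert_{L^\infty_x}\lesssim \lVert F(|x|\geq t^\beta)\partial_x|u|^2\rVert_{L^2_x}\,\lVert u\rVert_{L^2_x}
\]
is false: replace $u$ by $\mu u$ and the left side scales like $\mu^2$ while the right scales like $\mu^3$, so it fails for small $\mu$. You seem to have read off the (typo-ridden) \emph{statement} of Lemma~\ref{lem:GNS-variant}; its \emph{proof} actually yields $\lVert F^2|\phi|^2\rVert_{L^\infty}\le 2\lVert F\partial_x\phi\rVert_{L^2}\lVert F\phi\rVert_{L^2}$, and applied with $\phi=|u|^2$ (then taking a square root) this gives the correct weighted Gagliardo--Nirenberg bound
\[
\lVert F^2|u|^2\rVert_{L^\infty_x}\lesssim \lVert F\partial_x|u|^2\rVert_{L^2_x}^{1/2}\,\lVert F|u|^2\rVert_{L^2_x}^{1/2},
\]
which is what the paper uses.

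With this corrected starting point, your strategy of using only the \emph{linear} part of~\eqref{eqn:inter-Mor} together with the trivial $L^\infty_t$ bound does not reach the stated endpoint. The factor $\lVert F\partial_x|u|^2\rVert_{L^2_x}^{1/2}$ now lies in $L^4_t\cap L^\infty_t$, while $\lVert F|u|^2\rVert_{L^2_x}^{1/2}\le \lVert u\rVert_{L^4_x}\lesssim \lVert u\rVert_{H^1}$ gives only an $L^\infty_t$ bound, so you obtain~\eqref{eqn:L-infty-decay-eq} merely for $q\ge 4$, whereas $4(p+1)/(p+3)<4$. The missing input is time decay for $\lVert F|u|^2\rVert_{L^2_x}$: the paper interpolates $\lVert F|u|^2\rVert_{L^2_x}\lesssim \lVert F^{2/(p+3)}u\rVert_{L^{p+3}_x}^{(p+3)/(p+1)}\lVert u\rVert_{L^2_x}^{(p-1)/(p+1)}$ and then invokes the \emph{nonlinear} term $\lVert F^{2/(p+3)}u\rVert_{L^{p+3}_{t,x}}$ from Theorem~\ref{thm:ext-int-Mora}. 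Combining the two Morawetz pieces via H\"older in $t$ lands exactly on $q=4(p+1)/(p+3)$.
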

\begin{proof}
    The estimate for $q = \infty$ is immediate by Sobolev embedding, so it suffices to prove~\eqref{eqn:L-infty-decay-eq} with $q = \frac{{2}(p+1)}{p+3}$. From~\Cref{lem:GNS-variant}, we have that
    \begin{equation*}\begin{split}
        \lVert {\tilde{F}}^2 |u|^{{4}} \rVert_{L^\infty_x} \lesssim& \lVert {\tilde{F}}\partial_x |u|^2 \rVert_{L^2_x} \lVert {\tilde{F}}|u|^2 \rVert_{L^2_x}%\\
        %\lesssim& \lVert F\partial_x |u|^2 \rVert_{L^2_x} \lVert F^{2/(p+3)}u \rVert_{L^{p+3}_x}^{\frac{p+3}{2(p+1)}} \lVert u \rVert_{L^2_x}^{\frac{p-1}{2(p+1)}}
    \end{split}\end{equation*}
    {Moreover, we see that}
    {\begin{equation*}
        \lVert {\tilde{F}} |u|^2 \rVert_{L^2} = \left\lVert {\tilde{F}}^{\frac{\editadd{p-1}}{p+1}} \left( F^{\frac{2}{p+3}} \editadd{|}u\editadd{|} \right)^{\frac{p+3}{p+1}} \editadd{|} u \editadd{|}^{\frac{p-1}{p+1}} \right\rVert_{L^2} \lesssim \editadd{\lVert {\tilde{F}}^{\frac{\editadd{p-1}}{p+1}} \rVert_{L^\infty}} \lVert {\tilde{F}}^{\frac{2}{p+3}} u \rVert_{L^{p+3}}^{\frac{p+3}{p+1}} \lVert u \rVert_{L^2}^{\frac{p-1}{p+1}}
    \end{equation*}}
    so the result follows by~\Cref{thm:ext-int-Mora}.
\end{proof}

%TODO: Complete review/edits below here

\section{Bounds for localized nonradiative states}\label{sec:loc-nonrad}
Now, we will prove~\Cref{thm:nonrad-spreading}.  Suppose that $|x|^{1/2}u_0 \in L^2$, and 
$$\lim_{t \to \infty} \langle F(|x| \geq t^\beta) \gamma F(|x| \geq t^\beta)\rangle_t = 0$$
Recall that by~\Cref{thm:ext-mora-est}, we have that
\begin{equation}\label{eqn:F-gamma-F-spreading}\begin{split}
    \langle F(|x| \geq t^\beta) \gamma F(|x| \geq t^\beta)\rangle_t \lesssim& \int_t^\infty \Re \left\langle \frac{|x|}{s^{\beta + 1}}F' \gamma F \right\rangle_s + s^{-3\beta}\langle G(x/s^\beta) \rangle_s\;ds\\
    &- \int_t^\infty 4s^{-\beta}\lVert \gamma\sqrt{F'F} u(s) \rVert_{L^2}^2 - \frac{s^{-\beta}}{p+1} \lVert \sqrt[p+1]{F'F} u(s) \rVert_{L^{p+1}}^{p+1}\;ds\\
    &+ O(t^{1-\beta(\sigma+1)} + \editadd{t^{\beta-1}})\lVert u \rVert_{L^2}^2
\end{split}\end{equation}
where $G(x/s^\beta)$ is a bounded function supported in the region $|x| \sim s^\beta$.
Now, 
\begin{equation*}
    F' \gamma F = \sqrt{F'F}\gamma \sqrt{F'F} + \sqrt{F'}\left([\gamma, \sqrt{F}]\sqrt{F'} - [\gamma, \sqrt{F'}]\sqrt{F}\right) \sqrt{F}
\end{equation*}
Since $[\gamma, f(x)] = -if'(x) \sgn(x)$ and $F, F' > 0$, we have that
\begin{equation}\label{eqn:F-gamma-F-spreading-absorb-1}\begin{split}
    \Re \left\langle \frac{|x|}{s^{\beta + 1}}F' \gamma F \right\rangle_s =& \Re \left\langle \frac{|x|}{s^{\beta + 1}}\sqrt{F'F} \gamma \sqrt{F'F} \right\rangle_s\\
    \leq& s^{-1} \lVert u(s) \rVert_{L^2} \lVert \gamma \sqrt{F'F} u(s)\rVert_{L^2}\\
     \leq& s^{-\beta} \lVert \gamma \sqrt{F'F} u(s)\rVert_{L^2}^2 + \frac{C}{s^{2-\beta}} \lVert u \rVert_{L^2}^2
\end{split}\end{equation}
For the second term, using the $L^2$ conservation, we see that
\begin{equation*}
    s^{-3\beta}\langle G(x/s^\beta) \rangle_s \lesssim s^{-3\beta} \lVert u \rVert_{L^2}^2
\end{equation*}
\editadd{Using the above inequalities to estimate the integrals in~\eqref{eqn:F-gamma-F-spreading}, we see that
\begin{equation}\label{eqn:F-gamma-F-est-all-p}
    \langle F(|x| \geq t^\beta) \gamma F(|x| \geq t^\beta) \rangle_t \leq O(t^{1-3\beta} + t^{1-3\beta} + t^{1-\beta(\sigma +1)}) \lVert u \rVert_{L^2}^2
\end{equation}
(Note that we have used the $-4\int_t^\infty s^{-\beta} \lVert \gamma \sqrt{F'F} u \rVert_{L^2}^2\;ds$ term in~\eqref{eqn:F-gamma-F-spreading} to absorb the corresponding term in~\eqref{eqn:F-gamma-F-spreading-absorb-1}).
}

\editadd{For the mass-subcritical problem, it is possible to get an improvement by controlling $\langle G(x/s^\beta) \rangle_t$ using the nonlinear term. We have that} $G(x/s^\beta) \lesssim \sqrt{\tilde{F}\tilde{F}'(x \geq  s^\beta)}$ \editadd{is supported in a region with volume $O(s^\beta)$, so H\"older's inequality gives}
\begin{equation*}\begin{split}
    s^{-3\beta}\langle G(x/s^\beta) \rangle_s \lesssim& \editadd{s^{-3\beta} s^{\left(1 - \frac{2}{p+1}\right)\beta}}\lVert \sqrt[p+1]{\tilde{F}\tilde{F}'(x \geq  s^\beta)} u \rVert_{L^{p+1}}^2\\
    \lesssim& s^{-2\beta} \left( s^{-\beta} \int \tilde{F}\tilde{F}'(x \geq  s^\beta) |u|^{p+1}\;dx\right)^{\frac{2}{p+1}}
\end{split}\end{equation*}
\editadd{Applying Young's inequality, we see that
\begin{equation*}
    |s^{-3\beta}\langle G(x/s^\beta) \rangle_s| \lesssim C s^{-2\frac{p+1}{p-1} \beta} + \frac{s^{-\beta}}{p+1} \int \tilde{F}\tilde{F}'(x \geq  s^\beta) |u|^{p+1}\;dx
\end{equation*}
In particular, arguing as above, we find that
\begin{equation*}
    \langle F(|x| \geq t^\beta) \gamma F(|x| \geq t^\beta) \rangle_t \leq Ct^{1 - 2 \frac{p+1}{p-1}\beta} + O(t^{1 - \beta(\sigma+1)} + t^{\beta - 1}) \lVert u \rVert_{L^2}^2
\end{equation*}
Since $-2 \frac{p+1}{p-1} < -3$ for $p < 5$, this gives an improvement in the mass-subcritical case.
}

Now, a quick calculation shows that
\begin{equation}\label{eqn:FxF-est}
    \partial_t \langle F(|x| \geq t^\beta)|x|F(|x| \geq t^\beta) \rangle_t = 2\langle F\gamma\editadd{F} \rangle_t + 2\langle \sqrt{FF' \frac{|x|}{t^\beta}} \gamma \sqrt{FF' \frac{|x|}{t^\beta}} \rangle_t + \langle FF' \frac{|x|^2}{t^{1+\beta}}\rangle_t
\end{equation}
\editadd{Integrating the above equality in time} and using~\Cref{lem:gamma-annulus-int} to control the second term, we see that
\begin{equation}
    \langle F|x|F \rangle_t \lesssim t^\beta\editadd{\lVert u \rVert_{L^2}^2} + \min(\editadd{t^{2 - 2 \frac{p+1}{p-1}\beta}}, t^{2-3\beta}\editadd{\lVert u \rVert_{L^2}^2}) + t^{2-(\sigma+1)\beta}\editadd{\lVert u \rVert_{L^2}^2}
\end{equation}
\editadd{Since $\lVert (1-F)|x| u \rVert_{L^2} \lesssim t^\beta \lVert u \rVert_{L^2}$, we conclude that}
\begin{equation}\label{eqn:x-growth-beta}
    \editadd{\langle |x| \rangle_t \lesssim t^\beta\editadd{\lVert u \rVert_{L^2}^2} + \min(\editadd{t^{2 - 2 \frac{p+1}{p-1}\beta}}, t^{2-3\beta}\editadd{\lVert u \rVert_{L^2}^2}) +  t^{2-(\sigma+1)\beta}\editadd{\lVert u \rVert_{L^2}^2})}
\end{equation}
\editadd{Now, our assumptions in~\Cref{thm:nonrad-spreading} only allow us to conclude~\eqref{eqn:x-growth-beta} for a single value of $\beta$.  In order to prove~\Cref{thm:nonrad-spreading}, we must show that we can obtain~\eqref{eqn:x-growth-beta} for all $\beta \in (1/3, 1)$.  The following lemma shows that this is possible:}
\editadd{\begin{lemma}\label{lem:all-beta-lem}
    Suppose that
    \begin{equation*}
        \lim_{t \to \infty} \frac{\langle |x| \rangle_t}{t} = 0
    \end{equation*}
    Then, for any $\beta \in (1/3, 1)$, 
    $$\Gamma := \lim_{t \to \infty} \langle F(|x| \geq t^\beta) \gamma F(|x| \geq t^\beta) = 0$$
\end{lemma}
\begin{proof}
    First, we show that the limit defining $\Gamma$ exists and is finite.  Using~\Cref{thm:ext-mora-est}, we see that
    \begin{equation*}\begin{split}
        \Gamma =& \lim_{T \to \infty} \langle F \gamma F \rangle_1 + \int_1^T 2s^{-\beta} \lVert \gamma \sqrt{F'F} u(s) \rVert_{L^2}^2 + {2}s^{-\beta}\frac{p-1}{p+1} \int F'F |u(s)|^{p+1}\;dx\;ds\\
        &+ O(\lVert u \rVert_{L^2}^2)\\
        =& \langle F \gamma F \rangle_1 + \int_1^\infty 2s^{-\beta} \lVert \gamma \sqrt{F'F} u(s) \rVert_{L^2}^2 + {2}s^{-\beta}\frac{p-1}{p+1} \int F'F |u(s)|^{p+1}\;dx\;ds\\
        &+ O(\lVert u \rVert_{L^2}^2)\\
        <& \infty\\
    \end{split}\end{equation*}

    We now show that $\Gamma = 0$.  From~\eqref{eqn:FxF-est} (which did not depend on the assumption $\Gamma = 0$), we have that
    \begin{equation*}
        \langle F(|x| \geq t^\beta) |x| F(|x| \geq t^\beta) \rangle_{t=T} = 2 \int_1^T \langle F \gamma F \rangle_t\;dt + O(t^\beta\lVert u \rVert_{L^2}^2)
    \end{equation*}
    The integral term is equal to  $2\Gamma T + o(T)$ as $T \to \infty$.  Since 
    $$0 \leq \langle F(|x| \geq t^\beta) |x| F(|x| \geq t^\beta) \rangle_{t} \leq \langle |x| \rangle_{t} = o(t)$$ this implies that $\Gamma = 0$.
\end{proof}}

\editadd{Thus, in~\eqref{eqn:x-growth-beta}, we are allowed to optimize over $\beta \in(1/3, 1)$.}  For $p \geq \editadd{5}$, \editadd{the optimal choice is} $\beta = \frac{1}{2}$, yielding
\begin{equation*}
    \langle |x| \rangle_t \lesssim t^{1/2}
\end{equation*}
On the other hand, if $\editadd{\frac{7}{3} < p < 5}$, then we can choose $\beta = \frac{2(p-1)}{3p+1} < \frac{1}{2}$ to obtain
$$\langle |x| \rangle_t \lesssim t^\beta$$
\editadd{while for $1 < p \leq \frac{7}{3}$, we get the above inequality for any $\beta \in (1/3, 1)$, since $2 - 2\frac{p-1}{p+1} \beta < \beta$ for $\beta > 1/3$.}  This concludes the proof of~\Cref{thm:nonrad-spreading}.

\section{Mass supercritical case: Construction of the free channel}
Now, we will show how to construct the free channel wave operator for~\eqref{eqn:NLS}:
\begin{equation}\label{eqn:free-channel-wave-op}
    \Omega_\text{free} u_0 := \slim_{t \to \infty} e^{-it\Delta} \Jfree(t) u(t)
\end{equation}
Here, the free channel restriction operator $\Jfree$ is defined to be
\begin{equation*}
    \Jfree(t) = e^{it\Delta}F(|x| \leq t^\alpha)e^{-it\Delta} F(|D| \geq t^{-\delta})
\end{equation*}
In this section, we will prove the following result:
\begin{thm}\label{thm:wave-op-exists}
    For positive $\alpha$ and $\delta$ satisfying
    \begin{equation}\label{eqn:alpha-delta-restrs}\begin{split}
        \alpha <& \alpha_0 := \frac{(p-5)(p+2)}{4(p+1)}\\
        \delta <& \min(1/2, \alpha)
    \end{split}\end{equation}
    the limit defining the free channel wave operator~\eqref{eqn:free-channel-wave-op} exists in the strong $H^1$ sense.
\end{thm}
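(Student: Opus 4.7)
The plan is to prove the theorem by Cook's method: we show that $\phi(t) := e^{-it\Delta}\Jfree(t)u(t)$ is Cauchy in $H^1$ as $t \to \infty$.  Differentiating,
\begin{equation*}
    \partial_t \phi(t) = e^{-it\Delta}\bigl[(\partial_t \Jfree - [i\Delta, \Jfree])u - i\Jfree(V + |u|^{p-1})u\bigr].
\end{equation*}
Since $i\Delta$ commutes with both $e^{it\Delta}$ and $F(|D| \geq t^{-\delta})$, a direct computation collapses the first bracketed term to
\begin{equation*}
    \partial_t\Jfree - [i\Delta, \Jfree] = e^{it\Delta}(\partial_t F_x)e^{-it\Delta}F_D + e^{it\Delta}F_x e^{-it\Delta}(\partial_t F_D),
\end{equation*}
where $F_x = F(|x| \leq t^\alpha)$ and $F_D = F(|D| \geq t^{-\delta})$.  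It then suffices to show that each of the three resulting contributions to $\partial_t\phi$ has integral over $[T', T]$ vanishing in $H^1$ as $T, T' \to \infty$, which we do via a combination of Cook's method and dual Strichartz (Theorem~\ref{thm:dual-Strichartz}).

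For the cutoff-derivative terms, we decompose $u = F(|x| \leq t^\gamma)u + F(|x| \geq t^\gamma)u$ with $\gamma$ slightly larger than $\alpha$.  The inner piece, composed with the microlocal structure of the operator, is $O(t^{-N})$ in operator norm by Lemma~\ref{lem:st-ph-lemma-high-freq} provided $\delta < \min(1/2, 1-\alpha, 1-\gamma)$, while the outer piece vanishes to leading order because $\partial_t F_x$ is supported on $|x| \sim t^\alpha$, disjoint from $|x| \geq t^\gamma$ up to rapidly decaying tails.  The $\partial_t F_D$ contribution is handled symmetrically using Lemma~\ref{lem:st-ph-lemma-low-freq}, and commuting $\partial_x$ through these estimates yields the $H^1$ bound.

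For the potential term $\Jfree V u$, split $Vu$ at scale $t^\theta$ with $\theta \in (1/\sigma, 1-\delta)$, which is nonempty since $\sigma \geq 2$ and $\delta < 1/2$.  The outer piece $F(|x| \geq t^\theta)Vu$ satisfies $\lVert \cdot \rVert_{L^2} \lesssim t^{-\sigma\theta}$ by~\eqref{eqn:V-decay-hypo}, integrable in time by the choice of $\theta$, while the inner piece $F(|x| \leq t^\theta) Vu$ is annihilated by $\Jfree$ up to $O(t^{-N})$ via Lemma~\ref{lem:st-ph-lemma-high-freq}.  The $H^1$ version uses~\eqref{eqn:dV-decay-hypo} to commute $\partial_x$ past $V$.

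The nonlinear term $\Jfree|u|^{p-1}u$ is the main obstacle.  Split it at scale $t^\beta$ with $\beta > 1/3$; the inner piece is again controlled by Lemma~\ref{lem:st-ph-lemma-high-freq}.  For the outer piece, since $2p \geq p+3$ and $u \in L^\infty_t L^\infty_x$ by $1$D Sobolev embedding,
\begin{equation*}
    \lVert F(|x| \geq t^\beta)|u|^{p-1}u\rVert_{L^2_x}^2 \lesssim \lVert u \rVert_{L^\infty}^{p-3}\int F^2(|x| \geq t^\beta)|u|^{p+3}\,dx,
\end{equation*}
and the right-hand side is $L^1_t$-integrable by the exterior interaction Morawetz estimate (Theorem~\ref{thm:ext-int-Mora}), giving only $L^2_t L^2_x$ control on $F|u|^{p-1}u$.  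Promoting this to an admissible dual Strichartz norm requires the additional time decay supplied by Corollary~\ref{thm:L-infty-decay}, namely $\lVert F^2|u|^2\rVert_{L^{q_0}_t L^\infty_x} < \infty$ for $q_0 = \frac{4(p+1)}{p+3}$.  Interpolating this pointwise decay against the Morawetz bound, combined with the microlocal smallness of $\Jfree$ on the exterior region (via Lemmas~\ref{lem:phys-Fourier-comm} and~\ref{lem:st-ph-lemma-high-freq}), closes the estimate.  The condition $p > 5$ guarantees $q_0 > 3$, which is precisely the threshold required for the interpolation; the restriction $\alpha < \alpha_0 = \frac{(p-5)(p+2)}{4(p+1)}$ together with $\delta < \min(1/2, \alpha)$ arises from simultaneously satisfying the stationary-phase constraints of Lemma~\ref{lem:st-ph-lemma-high-freq} and the resulting exponent balance.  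The key difficulty to be overcome is exactly this gap between the bare $L^2_t$ control from Morawetz and the $L^1_t$-type control demanded by Cook's method, which is bridged only through the careful interplay of Morawetz integrability, Corollary~\ref{thm:L-infty-decay}, and the microlocal structure of $\Jfree$.
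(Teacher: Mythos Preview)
Your handling of the channel restriction error (the $\partial_t\Jfree$ contribution) contains a genuine gap.  You split $u = F(|x|\leq t^\gamma)u + F(|x|\geq t^\gamma)u$ and claim that the outer piece of $(\partial_t F_x)\,e^{-it\Delta}F_D\,u$ ``vanishes to leading order because $\partial_t F_x$ is supported on $|x|\sim t^\alpha$, disjoint from $|x|\geq t^\gamma$.''  But these two cutoffs act on \emph{different} sides of the propagator $e^{-it\Delta}$: the first on the output variable, the second on $u(t)$ before propagation.  Disjoint spatial supports separated by $e^{-it\Delta}$ give no decay whatsoever.  Indeed, any free-wave component $e^{it\Delta}\psi$ inside $u(t)$ sits (at frequencies $|\xi|\geq t^{-\delta}$) mostly at $|x|\sim 2|\xi|t \gg t^\gamma$, passes through the outer cutoff, and is then mapped by $e^{-it\Delta}$ back to $\psi$, which has no reason to avoid $|x|\sim t^\alpha$.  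Since $\partial_t F_x$ contributes only $O(t^{-1})$ in operator norm, you are left with a non-integrable $t^{-1}$ tail and the Cook integral diverges.  The paper handles this term by an entirely different mechanism: a duality argument together with a propagation estimate for the observable $B(t)=F_D F_x F_D$, which shows $\lVert\sqrt{\partial_t F_x}\,F_D e^{-it\Delta}u(t)\rVert_{L^2}\in L^2_t$ and then closes via Cauchy--Schwarz in $t$ against the trivial bound $\int\lVert\sqrt{\partial_t F_x}\varphi\rVert^2\,dt\leq\lVert\varphi\rVert^2$.  This is the substantive idea you are missing.

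Your treatment of the nonlinear term is also too vague to stand.  The bare exterior Morawetz bound gives $F(|x|\geq t^\beta)|u|^{p-1}u \in L^2_tL^2_x$, which is not a dual Strichartz space, and ``microlocal smallness of $\Jfree$ on the exterior region'' is not a correct description of what closes the estimate.  The actual mechanism in the paper is the dispersive estimate: since $\tilde J_{\mathrm{free}}$ contains the output localization $F(|x|\leq t^\alpha)$, one has
\[
\lVert \tilde J_{\mathrm{free}}\,e^{-it\Delta}\,F(|x|\geq t^\beta)|u|^{p-1}u\rVert_{L^2}
\;\lesssim\; t^{\alpha/2}\,t^{-1/2}\,\lVert F(|x|\geq t^\beta)|u|^{p-1}u\rVert_{L^1}
\;\lesssim\; t^{-(1-\alpha)/2}\lVert u\rVert_{L^2}^2\lVert \tilde F|u|^2\rVert_{L^\infty}^{(p-2)/2},
\]
and then Corollary~\ref{thm:L-infty-decay} places $\lVert\tilde F|u|^2\rVert_{L^\infty}$ in $L^{4(p+1)/(p+3)}_t$.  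The integrability condition $\frac{(p+3)(p-2)}{8(p+1)}+\frac{1-\alpha}{2}>1$ is exactly $\alpha<\alpha_0$; it comes from this dispersive-plus-H\"older step, not from the stationary-phase lemma as you suggest.
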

Using Cook's method, we can write
\begin{subequations}\begin{align}
    \Omega_\text{free} u_0 -e^{-i\Delta}\Jfree(1)u(1) =
    &- i \slim\limits_{T \to \infty} \int_1^T \tilde{J}_\text{free}e^{-it\Delta}  Vu(t)\;dt\label{eqn:pot-err}\\
    &-i \slim\limits_{T \to \infty} \int_1^T \tilde{J}_\text{free}e^{-it\Delta}  |u|^{p-1}u(t) \; dt\label{eqn:nonlin-err}\\
    &+ \slim\limits_{T \to \infty} \int_1^T \frac{d}{dt} \tilde{J}_\text{free}(t)e^{-it\Delta}  u(t) \;dt \label{eqn:J-err}
\end{align}\end{subequations}
where
\begin{equation*}
    \tilde{J}_\text{free} = F(|x| \leq t^\alpha) F(|D| \geq t^{-\delta})
\end{equation*}
To prove~\Cref{thm:wave-op-exists}, we must show that each of the integrals~\eqref{eqn:pot-err}--\eqref{eqn:J-err} converges as $T \to \infty$.  We will show how to get convergence in $L^2$, and then sketch the modifications necessary to extend the convergence to $H^1$.

\subsection{The potential term}\label{sec:potential}
We begin by considering the potential term~\eqref{eqn:pot-err}.  Let us write
\begin{equation*}
    \eqref{eqn:pot-err} = -i\slim_{T \to \infty} \int_1^T \tilde{J}_\text{free}e^{-it\Delta}  F\left(|x| \geq t^\beta\right)Vu(t)\;dt\\
    -i\slim_{T \to \infty} \int_1^T \tilde{J}_\text{free}e^{-it\Delta}  F\left(|x| \leq t^\beta\right)Vu(t)\;dt
\end{equation*}
where
\begin{equation}\label{eqn:beta-def}
    \max(\frac{1}{3}, \frac{1}{\sigma}) < \beta < 1 - \delta
\end{equation}
For the first term, the definition of $\beta$ guarantees that $\lVert F(|x| \geq t^\beta)V \rVert_{L^\infty_x}$ is integrable in time, which allows us to control the integral.  On the other hand, since $\delta < \min(1-\alpha, 1/2)$, we can use~\Cref{lem:st-ph-lemma-high-freq} to show that the integrand of the second term also decays at an integrable rate.

\subsection{The nonlinear term}\label{sec:nonlin}
We now turn to the nonlinear term~\eqref{eqn:nonlin-err}.  As before, we decompose the interaction term into a piece close to the origin and a piece far from the origin:
\begin{equation*}\begin{split}
    e^{-it\Delta} \Jfree  |u|^{p-1}u =& \tilde{J}_\text{free} e^{-it\Delta} F(|x| \leq t^\beta) |u|^{p-1}u\\
    &+ \tilde{J}_\text{free} e^{-it\Delta}F(|x| \geq t^\beta) |u|^{p-1}u
\end{split}\end{equation*}
For the term localized near the origin, we can again use~\Cref{lem:st-ph-lemma-high-freq} combined with the Sobolev embedding to conclude that
\begin{equation*}
    \lVert \tilde{J}_\text{free} e^{-it\Delta} F(|x| \leq t^\beta) |u|^{p-1}u \rVert_{L^2} \lesssim t^{-2} \lVert u \rVert_{H^1}^{p}
\end{equation*}
which is integrable in time.  In the exterior region $|x| \geq t^{\beta}$, we have that
\begin{equation*}\begin{split}
    \lVert \tilde{J}_\text{free} e^{-it\Delta}  F(|x| \geq t^\beta) |u|^{p-1}u \rVert_{L^2} \lesssim& t^{-\frac{1}{2}(1 - \alpha)} \lVert F(|x| \geq t^\beta) |u|^{p-1}u \rVert_{L^1_x}\\
    \lesssim& t^{-\frac{1}{2}(1 - \alpha)} \lVert u \rVert_{L^2}^2 \lVert \tilde{F}(|x| \geq t^\beta) |u|^{2} \rVert_{L^\infty}^{\frac{p-2}{2}}
\end{split}\end{equation*}
where $\tilde{F}$ is a cut-off function such that $\tilde{F} = 1$ on the support of $F$.

From~\Cref{thm:L-infty-decay}, we have that $\tilde{F} |u|^2 \in L^{\frac{4(p+1)}{p+3}}_tL^\infty_x$.  Thus, the term in the exterior region will be integrable in time provided that
\begin{equation*}
    \frac{(p+3)(p-2)}{8(p+1)} + \frac{1-\alpha}{2} > 1
\end{equation*}
which simplifies to
\begin{equation*}
    \alpha < \frac{(p-5)(p+2)}{4(p+1)} = \alpha_0
\end{equation*}

\subsection{The channel restriction error}\label{sec:channel-restr}
Finally, we consider the term~\eqref{eqn:J-err} containing $\frac{d}{dt} \tilde{J}_\text{free}$.  We can write
\begin{align*}
    \eqref{eqn:J-err} =& \slim\limits_{T \to \infty} \int_1^T C_1(t) e^{-it\Delta} u(t) \;dt + \slim\limits_{T \to \infty} \int_1^T C_2(t) e^{-it\Delta} u(t) \;dt
\end{align*}
with
\begin{equation*}\begin{split}
    C_1(t) =& \partial_t F(|x| \leq t^\alpha) F(|D| \geq t^{-\delta})\\
    C_2(t) =& F(|x| \leq t^\alpha) \partial_t F(|D| \geq t^{-\delta})
\end{split}
\end{equation*}
To start, we will show that the improper integral containing $C_1(t)$ exists by showing that it is Cauchy.  Let $1 \leq t_0 < t_1 < \infty$.  Then, by duality, we can write
\begin{equation}\label{eqn:crosstalk-1-Cauchy}\begin{split}
    \biggl\lVert \int_{t_0}^{t_1} C_1(t) e^{-it\Delta} u(t) \;dt \biggr\rVert_{L^2} =& \sup_{\lVert \varphi \rVert_{L^2} = 1} \int_{t_0}^{t_1} \left\langle \varphi,  C_1(t) e^{-it\Delta} u(t) \right\rangle \;dt\\
    =& \sup_{\lVert \varphi \rVert_{L^2} = 1} \int_{t_0}^{t_1} \biggl\langle \sqrt{\partial_t F(|x| \leq t^\alpha)} \varphi,  \sqrt{\partial_t F(|x| \leq t^\alpha)} F(|D| \geq t^{-\delta}) e^{-it\Delta} u(t) \biggr\rangle \;dt
\end{split}\end{equation}
Since $\varphi$ is time-independent, we have that
\begin{equation}\label{eqn:psi-est}\begin{split}
    \int_{t_0}^{t_1} \left\lVert \sqrt{\partial_t F(|x| \leq t^\alpha)} \varphi \right\rVert_{L^2}^2\;dt =& \left\langle \varphi, \int_{t_0}^{t_1} \partial_t F(|x| \leq t^\alpha)\;dt \varphi \right\rangle\\
    =& \left\langle \varphi, \left(F(|x| \leq t_1^\alpha) - F(|x| \leq t_0^\alpha)\right) \varphi \right\rangle\\
    \leq& \lVert \varphi \rVert_{L^2}^2
\end{split}\end{equation}
On the other hand, by considering the evolution of the propagation observable
\begin{equation*}
    B(t) =  F(|D| \geq t^{-\delta}) F(|x| \leq t^\alpha) F(|D| \geq t^{-\delta})
\end{equation*}
when applied to $\phi(t) = e^{-it\Delta} u(t)$, we compute that
\begin{subequations}\begin{align}
    \partial_t \langle B(t) \rangle_{t,\phi} =& \langle \partial_t B(t) \rangle_{t,\phi} + \langle i[e^{-it\Delta} (V(t) + |u|^{p-1}) e^{it\Delta}, B(t)] \rangle_{t,\phi}\notag\\
    =& \langle F(|D| \geq t^{-\delta}) \partial_t F(|x| \leq t^\alpha) F(|D| \geq t^{-\delta}) \rangle_{t,\phi}\label{eqn:crosstalk-1-a}\\
    &+ \langle \partial_t F(|D| \geq t^{-\delta}) F(|x| \leq t^\alpha) F(|D| \geq t^{-\delta}) \rangle_{t,\phi}\label{eqn:crosstalk-1-b}\\
    &+ \langle F(|D| \geq t^{-\delta}) F(|x| \leq t^\alpha) \partial_t F(|D| \geq t^{-\delta}) \rangle_{t,\phi}\label{eqn:crosstalk-1-c}\\
    &+ \langle i[e^{-it\Delta} V(t) e^{it\Delta}, B(t)] \rangle_{t,\phi}\label{eqn:crosstalk-1-d}\\
    &+ \langle i[e^{-it\Delta} |u|^{p-1} e^{it\Delta}, B(t)] \rangle_{t,\phi}\label{eqn:crosstalk-1-e}
\end{align}\end{subequations}
\editadd{(Notice that this differs from~\eqref{eqn:obs-deriv}, since $\phi$ is not a solution of~\eqref{eqn:NLS}.)}  For~\eqref{eqn:crosstalk-1-d}, we note that
\begin{equation}\label{eqn:crosstalk-1-d-exp}\begin{split}
    \langle e^{-it\Delta} V(t) e^{it\Delta} B(t) \rangle_{t,\phi} =& \langle e^{-it\Delta} V(t) e^{it\Delta} F(|D| \geq t^{-\delta})F(|x| \leq t^\alpha) F(|D| \geq t^{-\delta}) \phi(t), \phi(t)\rangle_{L^2_x}\\
    =& \langle F(|D| \geq t^{-\delta}) \phi(t), F(|x| \leq t^\alpha)  e^{-it\Delta} F(|D| \geq t^{-\delta}) V u(t)\rangle_{L^2_x}
\end{split}
\end{equation}
From~\Cref{sec:potential}, we know that $\lVert F(|x| \leq t^\alpha)  e^{-it\Delta} F(|D| \geq t^{-\delta}) V u(t) \rVert_{L^2}$ is integrable in time, which shows that~\eqref{eqn:crosstalk-1-d-exp} is in $L^1_t(1,\infty)$.  A similar argument also applies to $\langle B(t) e^{-it\Delta} V(t) e^{it\Delta} \rangle_{t,\phi}$, which shows that~\eqref{eqn:crosstalk-1-d} is time integrable.  Similarly, for~\eqref{eqn:crosstalk-1-e}, we write
\begin{equation*}\begin{split}
    \langle e^{-it\Delta} |u|^{p-1} e^{it\Delta} B(t) \rangle_{t,\phi} =& \langle e^{-it\Delta} |u|^{p-1} e^{it\Delta}F(|D| \geq t^{-\delta}) F(|x| \leq t^\alpha) F(|D| \geq t^{-\delta}) \phi, \phi \rangle_{L^2_x}\\
    =& \langle  F(|D| \geq t^{-\delta}) \phi, F(|x| \leq t^\alpha) e^{-it\Delta} F(|D| \geq t^{-\delta})|u|^{p-1}u \rangle_{L^2_x}\\
\end{split}\end{equation*}
and note that $F(|x| \leq t^\alpha) e^{-it\Delta} F(|D| \geq t^{-\delta})|u|^{p-1}u$ was shown to be in $L^1_tL^2_x$ in section~\Cref{sec:nonlin}, showing that~\eqref{eqn:crosstalk-1-e} is integrable.  For~\eqref{eqn:crosstalk-1-b} and~\eqref{eqn:crosstalk-1-c}, we observe that
\begin{equation*}\begin{split}
    \left\lVert \left[F(|D| \geq t^{-\delta}), F(|x| \leq t^\alpha)\right] \right\rVert_{L^2 \to L^2} \lesssim& t^{-\alpha + \delta}\\
    \left\lVert \left[\partial_t F(|D| \geq t^{-\delta}), F(|x| \leq t^\alpha)\right] \right\rVert_{L^2 \to L^2} \lesssim& t^{-1-\alpha + \delta}\\
    \left\lVert \partial_t F(|D| \geq t^{-\delta}) \right\rVert_{L^2 \to L^2} \lesssim t^{-1}
\end{split}\end{equation*}
Thus, after symmetrizing using~\Cref{lem:three-term-sym}, we find that
\begin{equation*}
    \eqref{eqn:crosstalk-1-b}+\eqref{eqn:crosstalk-1-c} = 2\left\lVert \sqrt{F(|x| \leq t^\alpha) F(|D| \geq t^{-\delta}) \partial_t F(|D| \geq t^{-\delta})} e^{-it\Delta} u(t) \right\rVert_{L^2}^2 + O(t^{-1-\alpha + \delta} \lVert u(t) \rVert_{L^2}^2)
\end{equation*}
is the sum of a positive term and an integrable term.  Since~\eqref{eqn:crosstalk-1-a} is positive, combining these estimates shows that
\begin{equation}\label{eqn:crosstalk-1-prop-est}
    \int_1^\infty \lVert \sqrt{\partial_t F(|x| \leq t^\alpha)} F(|D| \geq t^{-\delta}) e^{-it\Delta} u(t) \rVert_{L^2}^2\;dt \lesssim \lVert u_0 \rVert_{L^2}^2
\end{equation}
Thus, using Cauchy-Schwarz in~\eqref{eqn:crosstalk-1-Cauchy} and using~\eqref{eqn:psi-est} and~\eqref{eqn:crosstalk-1-prop-est}, we see that the integral containing $C_1(t)$ is strongly Cauchy in time.

Since the argument for the integral containing $C_2(t)$ is similar, we will sketch the necessary changes without going into detail.  Using commutator estimates, we see that
\begin{equation*}
    C_2(t) = \partial_t F(|D| \geq t^{-\delta})
  F(|x| \leq t^\alpha) + O_{L^2 \to L^2}(t^{-1-\alpha + \delta})
\end{equation*}
The leading order term is essentially the same as $C_1(t)$, but with the roles of $F(|x| \leq t^\alpha)$ and $F(|D| \geq t^{-\delta})$ swapped.  A cursory examination shows that the previous argument goes through with only minor changes under this switch, so the integral containing $C_2(t)$ also converges strongly in $L^2$.

\subsection{On the wave operator in \texorpdfstring{$H^1$}{H1}}
The above argument shows that the wave operator $\Omega_\text{free}$ is well defined in $L^2$ whenever $u(t)$ is bounded in $H^1$.  Examining the proof, the $H^1$ bound is only ever used to obtain the exterior $L^\infty$ bound from~\Cref{thm:L-infty-decay}.  As a result, we can show that $\partial_x \Omega_\text{free} u_0 \in L^2$, since there are no problems with loss of regularity.  We will sketch the changes necessary for this improvement.  Applying Cook's method, we obtain terms analogous to~\cref{eqn:pot-err,eqn:nonlin-err,eqn:J-err}:
\begin{subequations}\begin{align}
    \partial_x \Omega_\text{free} u_0
    =& e^{-i\Delta} \Jfree(1) u(1)\notag \\
    &- i \slim\limits_{T \to \infty} \partial_x\int_1^T \tilde{J}_\text{free}e^{-it\Delta}  Vu(t)\;dt\label{eqn:pot-err-deriv}\\
    &-i \slim\limits_{T \to \infty} \partial_x\int_1^T \tilde{J}_\text{free}e^{-it\Delta}  |u|^{p-1}u(t) \; dt\label{eqn:nonlin-err-deriv}\\
    &+ \slim\limits_{T \to \infty} \partial_x\int_1^T \frac{d}{dt} \tilde{J}_\text{free}(t)e^{-it\Delta}  u(t) \;dt \label{eqn:J-err-deriv}
\end{align}\end{subequations}
It only remains to prove that these limits exist in $L^2$.

\subsubsection{The potential term}
Examining the integrand of~\eqref{eqn:pot-err-deriv}, we find that
\begin{equation*}\begin{split}
    \partial_x (\tilde{J}_\text{free}e^{-it\Delta}  Vu(t)) =& [\partial_x, \tilde{J}_\text{free}] e^{-it\Delta} Vu(t)\\
    &+ \tilde{J}_\text{free}e^{-it\Delta} V' u(t)\\
    &+ \tilde{J}_\text{free}e^{-it\Delta} V \partial_x u(t)
\end{split}\end{equation*}
For the first term, we observe that the commutator $[\partial_x, \tilde{J}_\text{free}]$ has the same support properties as $\tilde{J}_\text{free}$ and is smaller by a factor of $t^{-\alpha}$, so the argument from~\Cref{sec:potential} goes through with no changes.  Since $V'$ has even better decay than $V$ in the exterior region, the same is true for the second term.  Finally, for the last term, we simply use the assumption of bounded energy instead of mass conservation when applying~\Cref{lem:st-ph-lemma-high-freq}.

\subsubsection{The nonlinear term}
Here, we have
\begin{equation*}\begin{split}
    \partial_x (\tilde{J}_\text{free}e^{-it\Delta}  |u|^{p-1}u(t)) =& [\partial_x, \tilde{J}_\text{free}] e^{-it\Delta} |u|^{p-1}u(t)(t)\\
    &+ \tilde{J}_\text{free}e^{-it\Delta} \left(\frac{p+1}{2} |u|^{p-1}\partial_x u + \frac{p-1}{2}|u|^{p-3} u^2 \partial_x \overline{u}\right)(t)\\
\end{split}\end{equation*}
The commutator term can be handled using the same modifications discussed above, while for the second term, the argument from~\Cref{sec:nonlin} goes through provided we place the $\partial_x u$ or $\partial_x \overline{u}$ factor in $L^2$.

\subsubsection{The channel restriction term}
Finally, we consider the channel restriction error term.  We have
\begin{equation*}\begin{split}
    \eqref{eqn:J-err-deriv} =& [\partial_x, \frac{d}{dt} \tilde{J}_\text{free}(t)] e^{-it\Delta} u(t)\\
    &+ F(|x| \leq t^\alpha) \partial_t F(|D| \geq t^{-\delta}) \partial_x e^{-it\Delta}  u(t)\\
    &+ \partial_t F(|x| \leq t^\alpha) F(|D| \geq t^{-\delta})e^{-it\Delta} \partial_x u(t)\\
\end{split}\end{equation*}
For the first term, we observe that $\lVert [\partial_x, \frac{d}{dt} \tilde{J}_\text{free}(t)] \rVert_{L^2 \to L^2} \lesssim t^{-1-\alpha}$, which is sufficient to place this term in $L^1_t$.  For the second term, $\partial_t F(|\xi| \geq t^{-\delta})$ is supported in the region $|\xi| \sim t^{-\delta}$, so
\begin{equation*}
    \lVert \partial_t F(|D| \geq t^{-\delta}) \partial_x \rVert_{L^2 \to L^2} \lesssim t^{-1-\delta}
\end{equation*}
which is integrable.  Finally, for the third term, we can argue by duality to reduce the problem to showing that 
\begin{equation*}
    \lVert \sqrt{\partial_t F(|x| \leq t^\alpha\editadd{)}} F(|D| \geq t^{-\delta}) \partial_x e^{-it\Delta} u(t) \rVert_{L^2_x} \in L^2_t
\end{equation*}
To show this, we perform a propagation estimate with respect to the observable
\begin{equation*}
    \partial_x B(t)\partial_x = \partial_x F(|D| \geq t^{-\delta}) F(|x| \leq t^\alpha)  F(|D| \geq t^{-\delta}) \partial_x
\end{equation*}
Since $\partial_x u$ is bounded in $L^2$ by assumption, $\langle \partial_x B(t) \partial_x \rangle_{t,\phi}$ is bounded in time.  Taking the Heisenberg derivative, we see that the argument for the terms~\cref{eqn:crosstalk-1-a,eqn:crosstalk-1-b,eqn:crosstalk-1-c} can be easily adapted by placing each of the derivatives on $\phi$.  (Note that the non-perturbative terms will now be negative definite instead of positive definite due to the integration by parts.) For~\eqref{eqn:crosstalk-1-d}, we see that
\begin{equation*}
    [\partial_x B(t) \partial_x, e^{-it\Delta}Ve^{-it\Delta}] = [\partial_x, e^{-it\Delta}Ve^{-it\Delta}]B(t) \partial_x + \partial_x B(t) [\partial_x, e^{-it\Delta}Ve^{-it\Delta}] + \partial_x [B(t), e^{-it\Delta}Ve^{-it\Delta}] \partial_x
\end{equation*}
The last term requires no modification once we place the derivatives on the $\phi$'s, while the first two terms can also be handled similarly using the decay hypothesis on $\partial_x V$.

The term~\eqref{eqn:crosstalk-1-e} requires slightly more modification.  As above, we have
\begin{equation*}\begin{split}
    [\partial_x B(t) \partial_x, e^{-it\Delta}|u|^{p-1}e^{-it\Delta}] =& [\partial_x, e^{-it\Delta}|u|^{p-1}e^{-it\Delta}]B(t) \partial_x + \partial_x B(t) [\partial_x, e^{-it\Delta}|u|^{p-1}e^{-it\Delta}]\\
    &+ \partial_x [B(t), e^{-it\Delta}|u|^{p-1}e^{-it\Delta}] \partial_x
\end{split}\end{equation*}
We will focus on the first term, since the modifications for the other terms are either similar or easier.  Here, we see that
\begin{equation*}
    [\partial_x, -ie^{-it\Delta}|u|^{p-1}e^{-it\Delta}] = -ie^{-it\Delta}(\partial_x |u|^{p-1})e^{-it\Delta}
\end{equation*}
Thus, we calculate that
\begin{equation*}\begin{split}
    |\langle [\partial_x, e^{-it\Delta}|u|^{p-1}e^{it\Delta}]B(t) \partial_x \rangle_{t,\phi}|=& |\langle e^{-it\Delta}(\partial_x |u|^{p-1})e^{-it\Delta}]B(t) \partial_x \phi, \phi \rangle_{L^2_x}|\\
    =& |\langle F(|D| \geq t^{-\delta}) \partial_x u(t), F(|x| \leq t^\alpha) e^{-it\Delta} F(|D| \geq t^{-\delta}) (\partial_x |u|^{p-1})u \rangle_{L^2_x}|
\end{split}
\end{equation*}
by the arguments used to control the nonlinear term,
\begin{equation*}
    \lVert F(|x| \leq t^\alpha) e^{-it\Delta} F(|D| \geq t^{-\delta}) (\partial_x |u|^{p-1})u \rVert_{L^1_tL^2_x} < \infty
\end{equation*}
which completes the proof that the channel restriction error is integrable in $H^1$.

\section{Mass supercritical case: Behavior of the remainder}
We now know that the part of the solution microlocalized to the support of $\Jfree$ behaves asymptotically like a free wave.  In this section, we will show that the mass and energy of the rest of the solution can spread only slowly.  In particular, we will show that:
\begin{thm}\label{thm:loc-state-spread}
    Let $\alpha$ and $\delta$ be the parameters introduced in~\Cref{thm:wave-op-exists}.  Then, for any $\kappa$ satisfying
    \begin{equation}\label{eqn:kappa-restr}
        \kappa > \max(1-\delta, \alpha, \beta)
    \end{equation}
    we have
    \begin{equation}\label{eqn:u-nonfree-bd-L2}
        \lim_{t \to \infty} \lVert F(|x| \geq t^\kappa) (I - \Jfree) u(t) \rVert_{L^2} = 0
    \end{equation}
    Similarly, for $\mu$ satisfying
    \begin{equation}\label{eqn:mu-restr}
        \mu > \beta
    \end{equation}
    we have improved localization of energy given by
    \begin{equation}\label{eqn:u-nonfree-bd-dot-H1}
        \lim_{t \to \infty} \lVert F(|x| \geq t^\mu) (I - \Jfree) \partial_x u(t) \rVert_{L^2} = 0
    \end{equation}
\end{thm}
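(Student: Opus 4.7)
Since~\Cref{thm:wave-op-exists} gives $\|\Jfree u(t) - e^{it\Delta} u_+\|_{L^2} \to 0$, writing $\uloc = u - e^{it\Delta} u_+$ we have $\uloc(t) = (I - \Jfree) u(t) + o_{L^2}(1)$, so~\eqref{eqn:u-nonfree-bd-L2} and~\eqref{eqn:u-nonfree-bd-dot-H1} reduce to the same statements with $(I - \Jfree) u(t)$ in place of $\uloc(t)$. The starting point is the microlocal decomposition
\[
I - \Jfree = F(|D| \leq t^{-\delta}) + e^{it\Delta} F(|x| \geq t^\alpha) e^{-it\Delta} F(|D| \geq t^{-\delta}),
\]
and I would bound each piece separately in the exterior region.

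For the low-frequency piece $F(|x| \geq t^\kappa) F(|D| \leq t^{-\delta}) u(t)$, I would insert a partition $1 = F(|x| \leq t^{\alpha''}) + F(|x| \geq t^{\alpha''})$ with $\max(\alpha, \beta, 1-\delta) < \alpha'' < \kappa$, which is possible by~\eqref{eqn:kappa-restr}. The near-origin contribution is polynomially small by~\Cref{lem:st-ph-lemma-low-freq}, whose hypotheses are satisfied by the choice of $\alpha''$. The far-field contribution reduces to bounding $\|F(|x| \geq t^{\alpha''}) u(t)\|_{L^2}$, which I would show vanishes by combining the averaged pointwise decay of~\Cref{thm:L-infty-decay} with mass conservation of $u$, upgrading from integrated-in-time decay to pointwise-in-time convergence through a Duhamel-based argument.

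For the non-free-outgoing piece, the operator $e^{it\Delta} F(|x| \geq t^\alpha) e^{-it\Delta}$ microlocalizes away from the free-flow trajectories $x = 2t\xi$. I would expand $u(t)$ using both the forward Duhamel in terms of $u_0$ and a ``future'' Duhamel from $u(T)$, then decompose the result into incoming and outgoing waves in the spirit of~\cite{taoAsymptoticBehaviorLarge2004b}: for outgoing components the future representation gives the ``right'' direction of propagation relative to the spatial cutoff, and conversely for incoming components. In either case the free-propagator piece is dispatched by~\Cref{lem:st-ph-lemma-high-freq}, while the Duhamel integrand is controlled using the exterior decay of $V$ together with~\Cref{thm:ext-int-Mora} and~\Cref{thm:L-infty-decay}, exactly as in~\Cref{sec:potential,sec:nonlin}.

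The $H^1$ statement follows from the same scheme after applying $\partial_x$: derivatives landing on cutoffs are absorbed by commutators of size $t^{-\kappa}$ or $t^{-\mu}$, while a derivative on $u$ is controlled by the bounded-$H^1$ hypothesis. The improved restriction $\mu > \beta$ (rather than $\mu > \max(1-\delta, \alpha, \beta)$) reflects the fact that the exterior $\partial_x u$ bound comes directly from the $L^2$-Morawetz estimate of~\Cref{thm:ext-mora-est}, without needing to route through the weaker pointwise bound of~\Cref{thm:L-infty-decay}. The main obstacle I expect is precisely the passage from integrated-in-time Morawetz estimates to pointwise-in-time convergence; this would be handled by coupling the propagation estimates to the Duhamel representations via time derivatives of suitable propagation observables, in the spirit of the argument already carried out in~\Cref{sec:channel-restr}.
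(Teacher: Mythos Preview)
Your overall architecture — split $I-\Jfree$ into a low-frequency piece and a piece localized away from free trajectories, then treat the latter via forward/future Duhamel and an incoming/outgoing decomposition — is the paper's strategy. But your handling of the low-frequency piece has a genuine gap. You claim the far-field contribution reduces to $\|F(|x| \geq t^{\alpha''}) u(t)\|_{L^2}$ and that this vanishes; it does not, since $u(t)$ contains the free wave $e^{it\Delta}u_+$, whose mass lives at $|x|\sim t$, so this exterior $L^2$ norm is bounded below by $\|u_+\|_{L^2}$. No appeal to~\Cref{thm:L-infty-decay} fixes this: the free wave has small $L^\infty$ norm but order-one exterior $L^2$ mass. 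The paper avoids the issue by decomposing differently, taking $J_\text{low}=F(|D|\leq t^{-\delta})\,e^{it\Delta}F(|x|\leq t^\alpha)e^{-it\Delta}$ and $J_\text{rem}=e^{it\Delta}F(|x|\geq t^\alpha)e^{-it\Delta}$. The extra conjugated spatial cutoff in $J_\text{low}$ means $F(|x|\geq t^\kappa)u_\text{low}$ is exactly the operator in~\Cref{lem:st-ph-lemma-low-freq} applied to $e^{-it\Delta}u(t)$ and is polynomially small. The residual low-frequency mass $F(|D|\leq t^{-\delta})e^{it\Delta}F(|x|\geq t^\alpha)e^{-it\Delta}u$ — which your splitting places in the ``easy'' bin — is in the paper absorbed into $u_\text{rem}$ and dispatched by the same Duhamel-plus-kernel-estimate machinery (the $P^\text{low}$ case of the incoming/outgoing analysis), not by an exterior mass bound on $u$.

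Your explanation of the $\dot H^1$ improvement is also off. The relaxation from $\kappa>\max(1-\delta,\alpha,\beta)$ to $\mu>\beta$ is not because~\Cref{thm:ext-mora-est} controls $\partial_x u$ more directly than~\Cref{thm:L-infty-decay}; it is because the low-frequency contributions are automatically negligible in $\dot H^1$: both $\|\partial_x u_\text{low}\|_{L^2}\lesssim t^{-\delta}\|u\|_{L^2}$ and $\|\tilde P^\text{low}\partial_x u_\text{rem}\|_{L^2}\lesssim t^{-\nu}\|u\|_{L^2}$ vanish with no spatial cutoff at all. This is precisely what removes the restrictive lower bounds on the exterior exponent that the low-frequency analysis had imposed in the $L^2$ case. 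As a smaller point, the ``future Duhamel from $u(T)$'' you invoke is really a representation from $t=+\infty$, and it requires (and the paper uses) the weak limit $e^{-it\Delta}u_\text{rem}(t)\rightharpoonup 0$, which follows from the form of $J_\text{rem}$; without it the boundary term does not drop.
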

\begin{rmk}
    Note that~\eqref{eqn:kappa-restr} and~\eqref{eqn:mu-restr} together with the bounds for the other parameters imply that $\kappa > 1/2$ and $\mu > \max(1/3, 1/\sigma)$.
\end{rmk}
Fix $\kappa$ and $\mu$ that satisfy~\eqref{eqn:kappa-restr} and~\eqref{eqn:mu-restr}.  We introduce the decomposition $I - \Jfree = J_\text{low} + J_\text{rem}$, where
\begin{equation}\begin{split}
    J_\text{low} =&  {e^{it\Delta}}F(|{x}| \leq t^\alpha) {e^{-it\Delta}} F(|D| \leq t^{-\delta})\\
    J_\text{rem} =& {e^{it\Delta}}F(|{x}| \geq t^\alpha) {e^{-it\Delta}}
\end{split}\end{equation}
and write $u_\text{low}(t) = J_\text{low}(t)u(t)$, $u_\text{rem}\editadd{(t)} = J_\text{rem}(t) u(t)$.  Observe that the energy of $u_\text{low}(t)$ goes to $0$ as $t \to \infty$.  Moreover, using~\Cref{lem:st-ph-lemma-low-freq} and noting the restriction~\eqref{eqn:kappa-restr} on $\kappa$, we see that the mass of $u_\text{low}(t)$ is concentrated in $|x| \leq t^\kappa$.
Thus, we have proved that
\begin{equation}\label{eqn:u-low-bounds}
    \lVert u_\text{low}(t) \rVert_{\dot{H}^1} + \lVert F(|x| \geq t^{\kappa}) u_\text{low}(t) \rVert_{L^2}  = o(1)
\end{equation}
To complete the proof of~\Cref{thm:loc-state-spread}, it suffices to show that
\begin{equation}\label{eqn:u-rem-L2-bound}
    \lVert F(|x| \geq t^{\kappa}) u_\text{rem}(t) \rVert_{L^2}  = o(1)
\end{equation}
and
\begin{equation}\label{eqn:u-rem-dot-H1-bound}
    \lVert F(|x| \geq t^{\mu}) \partial_x u_\text{rem}(t) \rVert_{L^2} = {o(1)}
\end{equation}
We will first show how to obtain the $L^2$ bound, and then explain the argument for the $\dot{H}^1$ bound.

\subsection{Slow spreading for \texorpdfstring{$u_\text{rem}$}{u\_rem}}
\subsubsection{Preliminary decomposition}
Using the Duhamel formula, we see that
\begin{equation}\label{eqn:u-rem-0-Duhamel}
    u_\text{rem}(t) = e^{it\Delta} F(|x| \geq t^\alpha) u_0 -i \int_0^t e^{it\Delta} F(|x| \geq t^\alpha) e^{-is\Delta} G(u(s))\;ds
\end{equation}
where for notational convenience we define $G(u) = Vu + |u|^{p-1}u$.  The term $e^{it\Delta} F(|x| \geq t^\alpha)u_0$ converges strongly to $0$ in $H^1$, so it can be neglected.  A similar argument shows that we can ignore the contribution to the Duhamel integral from $0 < s < T$ for any fixed parameter $T$.  In addition, a quick calculation using the decay hypotheses for $V$ and~\Cref{thm:L-infty-decay}
shows that
\begin{equation*}
    F(|x| \geq t^\beta) (Vu + |u|^{p-1}u)  \in L^{4/3}_t W^{1,1}_x
\end{equation*}
so, by using the dual Strichartz estimates from~\Cref{thm:dual-Strichartz}
we find that
\begin{equation*}
    \lim_{T \to \infty} \sup_{t > T} \left\lVert \int_T^t e^{-is\Delta} {G}(u({s}))\;ds \right\rVert_{H^1} = 0
\end{equation*}
Thus, we can write
\begin{equation}\label{eqn:u-rem-0-Duhamel-processed}
    u_\text{rem}(t) = -i \int_T^t e^{it\Delta}  F(|x| \geq t^\alpha) e^{-is\Delta} F(|x| \leq s^\beta) {G}(u(s)) \;ds + R_{T}(t)
\end{equation}
where $\lim_{T \to \infty} \sup_{t > T} \lVert R_T(t)\rVert_{H^1} = 0$.  On the other hand, for any $\psi \in H^1$, 
\begin{equation*}
    \langle \psi, e^{-it\Delta}J_\text{rem}(t) u(t) \rangle = \langle e^{it\Delta} F(|x| \geq t^\alpha) \psi, u(t) \rangle \to 0
\end{equation*}
so $\wlim_{t \to \infty} e^{-it\Delta} u_\text{rem}(t) = 0$.  It follows that we can write
\begin{equation}\label{eqn:u-rem-infty-Duhamel}\begin{split}
    u_\text{rem}(t) =& \int_t^\infty e^{it\Delta} \partial_s\left( e^{-is\Delta} J_\text{rem}(s) u(s)\right)\;ds\\
    =& -\alpha \int_t^\infty e^{it\Delta} \frac{x}{s^{\alpha + 1}}F'(|x| \geq s^\alpha) e^{-is\Delta} u(s)\;ds\\
    &+i \int_t^\infty e^{it\Delta} F(|x| \geq s^\alpha) e^{-is\Delta} (V + |u|^{p-1}) u(s)\;ds
\end{split} \end{equation}
where convergence for the integrals is understood in the weak sense.  A straightforward adaptation of the arguments from~\Cref{sec:channel-restr} shows that 
 \begin{equation*}
     \left\lVert \int_t^\infty \frac{x}{s^{\alpha + 1}}F'(|x| \geq s^\alpha) e^{-is\Delta} u(s)\;ds \right\rVert_{L^\infty_t H^1_x} \lesssim \lVert u \rVert_{H^1}\left(1 + \lVert u \rVert_{H^1}^{p-1}\right)
 \end{equation*}
 Thus,~\eqref{eqn:u-rem-infty-Duhamel} can be rewritten as
 \begin{equation}\label{eqn:u-rem-infty-Duhamel-processed}\begin{split}
    u_\text{rem}(t) =& i\int_t^\infty e^{it\Delta} F(|x| \geq s^\alpha) e^{-is\Delta}  F(|x| \leq s^\beta)G(u{(s)})\;ds + \tilde{R}(t)
\end{split}\end{equation}
where $\tilde{R}(t) = o_{H^1}(1)$.  The advantage of the representations~\eqref{eqn:u-rem-0-Duhamel-processed} and~\eqref{eqn:u-rem-infty-Duhamel-processed} is that the remainders $R_T(t)$ and $\tilde{R}(t)$ are negligible in~\eqref{eqn:u-rem-L2-bound} and~\eqref{eqn:u-rem-dot-H1-bound}.  Thus, in the discussion that follows we will drop the remainder terms in~\eqref{eqn:u-rem-0-Duhamel-processed} and~\eqref{eqn:u-rem-infty-Duhamel-processed}.

\subsubsection{An incoming/outgoing decomposition}
To prove localization for $u_\text{rem}$, we will employ an incoming/outgoing decomposition.  Let us define
\begin{equation}\label{eqn:in-out-proj}\begin{split}
    P^\text{out} =&  \bbOne_{x \geq 0} F(D \geq t^{-\lambda}) + \bbOne_{x < 0} F(-D \geq t^{-\lambda})\\
    P^\text{in} =&  \bbOne_{x \geq 0} F(-D \geq t^{-\lambda}) + \bbOne_{x < 0} F(+D \geq t^{-\lambda})\\
    P^\text{low} =& F(|D| \leq t^{-\lambda})
\end{split}\end{equation}
where
\begin{equation}\label{eqn:lambda-restr}
    1-\kappa < \lambda < \min\left(\frac{1}{2}, 1-\alpha, 1-\beta\right) 
\end{equation}
\begin{rmk}
    It is common in the literature to consider the alternative definitions
    \begin{align*}
        {P}^\textup{out} =& F(A) \\
        {P}^\textup{in} =& 1 - F(A)
    \end{align*}
    where $A = \frac{1}{2}(x \cdot D + D \cdot x)$ is the dilation operator and $F(x)$ is (a possibly smoothed version of) a cut-off to $x > 0$ \cite{mourre1979link}.  At least formally, our incoming and outgoing operators can be written as
    \begin{align*}
        P^\textup{out} =& \bbOne_{A > 0} F(|D| \geq t^{-\lambda})\\
        P^\textup{in} =& \bbOne_{A < 0} F(|D| \geq t^{-\lambda})
    \end{align*}
    Since we work in dimension one, the definition~\eqref{eqn:in-out-proj} allows us to avoid the technicalities associated with the operator calculus machinery.  It is also essentially the same as the original definition given by Enss~\cite{enssAsymptoticCompletenessQuantum1978}.
\end{rmk}
By combining this decomposition with~\eqref{eqn:u-rem-0-Duhamel-processed} and~\eqref{eqn:u-rem-infty-Duhamel-processed}, we will establish the bounds~\eqref{eqn:u-rem-L2-bound}.

\paragraph{\textit{Bounds for low frequencies}} We begin by considering the $L^2$ bound for $P^\text{low}\urem$.  Using the representation~\eqref{eqn:u-rem-0-Duhamel-processed}, we see that
\begin{equation*}\begin{split}
    F(|x| \geq t^\kappa) P^\text{low} \urem =& -i \int_T^t F(|x| \geq t^\kappa) F(|D| \leq t^{-\lambda}) e^{it\Delta}  F(|x| \geq t^\alpha) e^{-is\Delta} F(|x| \leq s^\beta) G(u(s)) \;ds\\
    &+ \{\textup{negligible terms}\}
\end{split}\end{equation*}
Note that we can write
\begin{equation*}
    e^{it\Delta}  F(|x| \geq t^\alpha) e^{-is\Delta} = e^{i(t-s)\Delta} - e^{it\Delta} F(|x| \leq t^\alpha) e^{-is\Delta}
\end{equation*}
so it suffices to establish the bounds
\begin{equation}\label{eqn:rem-low-op-bds}\begin{split}
    \int_T^t \lVert F(|x| \geq t^\kappa) F(|D| \leq t^{-\lambda}) e^{i(t-s)\Delta} F(|x| \leq s^\beta) \rVert_{L^2 \to L^2}\;ds := \rmI^\text{low}_1 =& o(1)\\
    \int_T^t \lVert F(|x| \geq t^\kappa) F(|D| \leq t^{-\lambda}) e^{it\Delta} F(|x| \leq t^\alpha) \rVert_{L^2 \to L^2}\;ds := \rmI^\text{low}_2 =& o(1)
\end{split}\end{equation}
We begin with $\rmI^\text{low}_2$. Observe that we can write
\begin{equation*}
    F(|x| \geq t^\kappa) F(|D| \leq t^{-\lambda}) e^{i(t-s)\Delta} F(|x| \leq s^\beta) \phi = \int K^\text{low}_2(x,y) \phi(y)\;dy
\end{equation*}
where
\begin{equation*}
    K^\text{low}_2(x,y) = \frac{F(|x| \geq t^\kappa)F(|y| \leq s^\beta)}{2\pi} \int e^{-it\xi^2 + (x-y)\xi} F(|\xi| \leq t^{-\lambda})\;d\xi
\end{equation*}
Integrating repeatedly by parts, we find that
\begin{equation*}\begin{split}
    K^\text{low}_2(x,y) =& \frac{F(|x| \geq t^\kappa)F(|y| \leq s^\beta)}{2\pi} \int e^{-it\xi^2 + (x-y)\xi} \left(\partial_\xi \frac{-i}{2t\xi + (x-y)}\right)^{K} F(|\xi| \leq t^{-\lambda})\;d\xi\\
    =& \sum_{a+b = K} C_{a,b}F(|x| \geq t^\kappa)F(|y| \leq s^\beta) \int e^{-i\editadd{t}\xi^2 + (x-y)\xi} \frac{-i(s-t)^a}{\left(2t\xi + (x-y)\right)^{K+a}} t^{b\lambda} F^{(b)}(|\xi| \leq t^{-\lambda})\;d\xi
\end{split}\end{equation*}
Now, since $\kappa > \max(\beta, 1-\lambda)$, we have that
\begin{equation*}
    |\editadd{-}2t\xi + (x-y)| \sim |x|
\end{equation*}
so
\begin{equation*}\begin{split}
    |K^\text{low}_2(x,y)| \lesssim_K& \sum_{a+b = K} F(|x| \geq t^\kappa)F(|y| \leq s^\beta) \frac{t^at^{(b-1)\lambda}}{|x|^{(K+a)}}
\end{split}
\end{equation*}
Integrating in space and time, we see that
\begin{equation*}
    \int_T^t\lVert K^\text{low}_2(x,y) \rVert_{L^2_{x,y}}\;ds \lesssim_K \sum_{a,b =K} \frac{T^{a+1 + (b-1)\lambda + \beta/2}}{T^{\kappa(K+a-1/2)}}
\end{equation*}
Since $\kappa > 1/2 > \lambda$, this expression vanishes as $T \to \infty$, establishing the second bound in~\eqref{eqn:rem-low-op-bds}.  To prove the first bound, we observe that $\rmI^\text{low}_1$ is associated with the integral kernel
\begin{equation*}\begin{split}
    K^\text{low}_1(x,y) =& \frac{F(|x| \geq t^\kappa) F(|y| \leq t^\alpha)}{2\pi} \int e^{-i(t-s)\xi^2 + i(x-y)\xi} F(|\xi| \leq t^{-\lambda})\;d\xi
\end{split}\end{equation*}
Since $\kappa > \alpha$, a straightforward adaptation of the arguments used to control $K^\text{low}_2$ shows that
\begin{equation*}
    \int_T^t\lVert K^\text{low}_1(x,y) \rVert_{L^2_{x,y}}\;ds = o(1)
\end{equation*}
as required.

\paragraph{\textit{Bounds for incoming waves}}
Now, we turn to the bounds for $P^\text{in}\urem$.  Note that $P^\text{in}$ projects to incoming waves.  Heuristically, we should not expect to find incoming waves after the solution has evolved forward in time from data localized to near the origin, so we will again use the Duhamel representation from the past~\eqref{eqn:u-rem-0-Duhamel}.  Expanding out the relevant operators, we find that
\begin{equation*}\begin{split}
    F(|x| \geq t^\kappa) P^\text{in}\urem =& -i \int_T^t F(x \geq t^\kappa) F(-D \geq t^{-\lambda}) e^{it\Delta}  F(|x| \geq t^\alpha) e^{-is\Delta} F(|x| \leq s^\beta) G(u(s)) \;ds\\
    &+ \{\text{symmetric and better terms}\}\\
    &= -i \int_T^t F(x \geq t^\kappa) F(-D \geq t^{-\lambda}) e^{i(t-s)\Delta} F(|x| \leq s^\beta) G(u(s)) \;ds\\
    &+i \int_T^t F(x \geq t^\kappa) e^{it\Delta}F(-D \geq t^{-\lambda})  F(|x| \leq t^\alpha) e^{-is\Delta} F(|x| \leq s^\beta) G(u(s)) \;ds\\
    &+ \{\text{symmetric and better terms}\}\\
    &:= \rmI^\text{in}_1 + \rmI^\text{in}_2 + \{\text{symmetric and better terms}\}\\
\end{split}\end{equation*}
The estimate for $\rmI^\text{in}_1$ is similar to those used for $\rmI^\text{low}_1$ and $\rmI^\text{low}_2$.  Introducing the kernel $K^\text{in}_1$ corresponding to the operator in $\rmI^\text{in}_1$, we see that
\begin{equation*}\begin{split}
    K^\text{in}_1(x,y) =& \frac{F(x \geq t^\kappa) F(|y| \leq s^\beta)}{2\pi} \int e^{-i(t-s)\xi^2 + i(x-y)\xi} F(-\xi \geq t^{-\lambda})\;d\xi\\
    =& \frac{F(x \geq t^\kappa) F(|y| \leq s^\beta)}{2\pi} \int e^{-i(t-s)\xi^2 + i(x-y)\xi} \left(\partial_\xi \frac{i}{2(t-s)\xi - (x-y)}\right)^K F(-\xi \geq t^{-\lambda})\;d\xi\\
    =& \sum_{a+b = K} C_{a,b} F(x \geq t^\kappa) F(|y| \leq s^\beta) \int e^{-i(t-s)\xi^2 + i(x-y)\xi} \frac{i(t-s)^a}{(2(t-s)\xi - (x-y))^{K+a}} t^{b\lambda} F^{(b)}(-\xi \geq t^{-\lambda})\;d\xi
\end{split}\end{equation*}
Observing that $|2(t-s)\xi - (x-y)| \sim \max ((t-s)|\xi|, |x|)$, we see that
\begin{equation*}\begin{split}
    \int \frac{(t-s)^a}{|2(t-s)\xi - (x-y)|^{K+a}} t^{b\lambda} |F^{(b)}(-\xi \geq t^{-\lambda})|\;d\xi  \lesssim& \frac{(t-s)^{a-1}}{|2(t-s) t^{-\lambda} - (x-y)|^{K+a-1}} t^{b\lambda}\\
    \lesssim& \frac{t^{(K-1)\lambda}}{|x|^{K}}
\end{split}\end{equation*}
Thus, we find that
\begin{equation*}\begin{split}
    \lVert K^\text{in}_1(x,y) \rVert_{L^2_{x,y}} \lesssim& \sum_{a+b = K} C_{a,b}  \lVert F(x \geq t^\kappa) F(|y| \leq s^\beta) \frac{t^{(K-1) \lambda}}{|x|^K}\rVert_{L^2_{x,y}}\\
    \lesssim&_K t^{(K-1)(\lambda - \kappa)} t^{\frac{\beta-\kappa}{2}}
\end{split}\end{equation*}
Since $\kappa > \lambda$, this can be made to decay arbitrarily fast in $t$, and $\lVert \rmI^\text{in}_1\rVert_{L^2} = o(1)$.  The argument that $\rmI^\text{in}_2$ goes to zero is analogous, since the kernel for $F(x \geq t^\kappa) e^{it\Delta}F(-D \geq t^{-\lambda})  F(|x| \leq t^\alpha)$ is
\begin{equation*}
    K^\text{in}_{2} = \frac{F(x \geq t^\kappa)F(|y| \leq t^\alpha)}{2\pi} \int e^{-it\xi^2 + \xi(x-y)} F(-\xi \geq t^{-\lambda})\;d\xi
\end{equation*}
and for $\kappa > \alpha$, the phase derivative $-2t\xi + (x-y) \gtrsim \editadd{-2t\xi + x > 0}$.  Integrating by parts as above, we find that $\lVert F(x \geq t^\kappa) e^{it\Delta}F(-D \geq t^{-\lambda})  F(|x| \leq t^\alpha) \rVert_{L^2\to L^2}$ decays rapidly in $t$, so $\rmI^\text{in}_2 \to 0$.

\paragraph{\textit{Bounds for outgoing waves}}
It only remains to control the outgoing part of $\urem$.  Using the Duhamel representation from the future~\eqref{eqn:u-rem-infty-Duhamel-processed}, we write
\begin{equation*}\begin{split}
    F(|x| \geq t^\kappa) P^\text{out}\urem =& i\int_t^\infty F(x \geq t^\kappa) F(D \geq t^{-\lambda}) e^{it\Delta} F(|x| \geq s^\alpha) e^{-is\Delta}  F(|x| \leq s^\beta)G(u)\;ds\\
    &+ \{\text{symmetric and better terms}\}\\
    =& i\int_t^\infty F(x \geq t^\kappa) F(D \geq t^{-\lambda}) e^{i(t-s)\Delta} F(|x| \leq s^\beta)G(u)\;ds\\
    &-i\int_t^\infty F(x \geq t^\kappa) F(D \geq t^{-\lambda}) e^{it\Delta} F(|x| \leq s^\alpha) e^{-is\Delta}  F(|x| \leq s^\beta)G(u)\;ds\\
    &+ \{\text{symmetric and better terms}\}\\
    =:& \rmI^\text{out}_1 + \rmI^\text{out}_2 + \{\text{symmetric and better terms}\}
\end{split}
\end{equation*}
For $\rmI^\text{out}_1$, we observe that the kernel for $F(x \geq t^\kappa) F(D \geq t^{-\lambda}) e^{i(t-s)\Delta} F(|x| \leq s^\beta)$ is
\begin{equation*}\begin{split}
    K^\text{out}_1(x,y) =& \frac{F(x \geq t^\kappa) F(|y| \leq s^\beta)}{2\pi} \int e^{-i(t-s)\xi^2 + i\xi(x-y)} F(\xi \geq t^{-\lambda})\;d\xi\\
    =& \sum_{a+b = K} C_{a,b} \frac{F(x \geq t^\kappa) F(|y| \leq s^\beta)}{2\pi} \int e^{-i(t-s)\xi^2 + i\xi(x-y)} \frac{(t-s)^at^{b\lambda} F^{(b)}(\xi \geq t^{-\lambda})}{2(s-t)\xi + (x-y))^{K+a}}  \;d\xi
\end{split}\end{equation*}
Now, if $t \leq s \leq 2t$, then $s^\beta \ll t^\kappa$ for $t \gg 1$, so
\begin{equation*}
    |2(s-t)\xi + (x-y)| \gtrsim t^\kappa
\end{equation*}
and by the same arguments as for $\rmI^\text{in}_1$, we find that
\begin{equation*}
    \int_t^{2t} \lVert F(x \geq t^\kappa) F(D \geq t^{-\lambda}) e^{i(t-s)\Delta} F(|x| \leq s^\beta) \rVert_{L^2 \to L^2}\;ds = o(1)
\end{equation*}
On the other hand, if $s > 2t$, then $2(s-t)\xi \gtrsim st^{-\lambda} \gg s^\beta$ since $1- \lambda > \beta$, and
\begin{equation*}
    |2(s-t)\xi + (x-y)| \gtrsim st^{-\lambda}
\end{equation*}
Hence, in this regime, 
\begin{equation*}\begin{split}
    \lVert K^\text{out}_1(x,y) \rVert_{L^2_{x,y}} \lesssim_K \frac{s^{\beta/2} t^{(2K - 3/2)\lambda}}{s^{K-1/2}}
\end{split}
\end{equation*}
which is sufficient since $\lambda < 1/2$.  Turning to $\rmI^\text{out}_2$, we observe that by~\eqref{eqn:approx-comm-1},
\begin{equation*}\begin{split}
    F(D \geq t^{-\lambda}) F(|x| \leq s^\alpha) = F(D \geq t^{-\lambda}) F(|x| \leq s^\alpha) F(D \geq \frac{1}{100} t^{-\lambda}) + O_{L^2 \to L^2}(s^{-N})
\end{split}\end{equation*}
Thus, it suffices to obtain bounds on the operator $F(|x| \leq s^\alpha) F(D \geq \frac{1}{100} t^{-\lambda}) e^{-is\Delta}  F(|x| \leq s^\beta)$, which can be represented using the kernel
\begin{equation*}
    K^\text{out}_2(x,y) = \frac{F(|x| \leq s^\alpha)F(|y| \leq s^\beta)}{2\pi} \int e^{is\xi^2 + i\xi(x-y)} F(\xi \geq \frac{1}{100} t^{-\lambda})\;d\xi
\end{equation*}
Since the derivative of the phase is bounded below by a multiple of $s\xi$ under the assumptions that $\alpha + \lambda < 1, \beta + \lambda < 1$, we can integrate by parts to show that the operator decays rapidly in $s$.

\subsubsection{Improved \texorpdfstring{$\dot{H}^1$}{dot-H 1} control}
We now sketch the modifications necessary to prove~\eqref{eqn:u-rem-dot-H1-bound}.  Let us introduce a reparametrized incoming/outgoing decomposition
\begin{equation*}\begin{split}
    \tilde{P}^\text{out} =&  \bbOne_{x \geq 0} F(D \geq t^{-\nu}) + \bbOne_{x < 0} F(-D \geq t^{-\nu})\\
    \tilde{P}^\text{in} =&  \bbOne_{x \geq 0} F(-D \geq t^{-\nu}) + \bbOne_{x < 0} F(+D \geq t^{-\nu})\\
    \tilde{P}^\text{low} =& F(|D| \leq t^{-\nu})
\end{split}\end{equation*}
where
\begin{equation}
    0 < \nu < \max(1/2, 1-\alpha, \alpha, \beta, 1-\beta, \mu)
\end{equation}
Note that in general, $\nu \neq \lambda$.  Evidently,
\begin{equation*}
    \lVert \tilde{P}_\text{low} \partial_x \urem \rVert_{L^2} \to 0
\end{equation*}
so we only need bounds on $\tilde{P}_\text{in} \partial_x \urem$ and $\tilde{P}_\text{out} \partial_x \urem$.  We first observe that the estimates for the incoming and outgoing waves continue to hold if we replace $\kappa$ with $\mu$ and $\lambda$ with $\nu$, since we only used the hypotheses $\lambda > 1- \kappa$ and $\kappa > \alpha$ to deal with low frequencies.  Moreover, after expanding $\tilde{P}_\text{in} \partial_x \urem$ and $\tilde{P}_\text{out} \partial_x \urem$, we obtain two types of terms: terms where the $F(|x| \leq t^\alpha)$ multiplier is replaced by $F'(|x| \leq t^\alpha) t^{-\alpha}$, and terms where $G(u(s))$ is replaced by $\partial_x G(u(s))$.  Both types of terms have the same or better decay and localization properties (note that $\partial_x G(u(s))$ and $G(u(s))$ satisfy the same $L^2$ bounds), so we only need to make minor modifications to the previous argument to prove~\eqref{eqn:u-rem-dot-H1-bound}.

\section{Proof of Theorem~\ref{thm:wave-op-and-bdd-state}}
We now optimize the parameters to show that the previous results imply~\Cref{thm:wave-op-and-bdd-state}.  Fix $\kappa > 1/2$ and $\mu > \max(\frac{1}{3}, \frac{1}{\sigma})$.  Observe that we can choose $\alpha < \min(\alpha_0,\kappa)$, $\delta < \min(1/2, \alpha)$ and take 
\begin{equation*}
    \min\left(\frac{1}{3}, \frac{1}{\sigma}\right) < \beta < \min(\kappa, \mu)
\end{equation*}
and the hypotheses for~\eqref{eqn:alpha-delta-restrs} and~\eqref{eqn:beta-def} are satisfied.  Thus, by~\Cref{thm:wave-op-exists}, we can write
\begin{equation*}
    u(t) = e^{it\Delta} u_+ + \uloc(t)
\end{equation*}
where $u_+ = \Omega_\text{free} u_0$ and
\begin{equation}\label{eqn:u-loc-def}
    \uloc(t) = u(t) - e^{it\Delta} u_+ = (I- \Jfree) u(t) + e^{it\Delta}(e^{-it\Delta} \Jfree u(t) -  u_+)
\end{equation}
By the definition of $u_+$, the second term in~\eqref{eqn:u-loc-def} goes to $0$ in $H^1$, while the first term obeys the bounds~\eqref{eqn:u-nonfree-bd-L2} and~\eqref{eqn:u-nonfree-bd-dot-H1} by~\Cref{thm:loc-state-spread}, which implies~\Cref{thm:wave-op-and-bdd-state}.

\section*{Declarations}

\paragraph{\textbf{Funding:}} A. Soffer is supported in part by NSF-DMS Grant number 2205931.

\paragraph{\textbf{Competing interests:}} The authors have no relevant financial or non-financial interests to disclose.

\bibliography{sources}
\bibliographystyle{plain}

\end{document}